\theoremstyle{plain}
\newtheorem{thm}{Theorem}[section]
\newtheorem*{thm*}{Theorem}
\newtheorem{lm}[thm]{Lemma}
\newtheorem{cor}[thm]{Corollary}
\newtheorem*{cor*}{Corollary}
\newtheorem{prop}[thm]{Proposition}
\newtheorem*{conj*}{Conjecture}
\theoremstyle{remark}
\theoremstyle{definition}
\newtheorem*{defn*}{Definition}
\newtheorem{Remark}[thm]{Remark}
\newtheorem{I_Remark*}{Remark}
\newtheorem{defn}[thm]{Definition}
\newcommand{\nc}{\newcommand}
\newcommand{\beq}{\begin{equation}}
\newcommand{\eeq}{\end{equation}}
\newcommand{\bpmx}{\begin{pmatrix}}
\newcommand{\epmx}{\end{pmatrix}}
\newcommand{\bbmx}{\begin{bmatrix}}
\newcommand{\ebmx}{\end{bmatrix}}
\newcommand{\wh}{\widehat}
\newcommand{\beqcd}[1]{\begin{equation*}\label{#1}\tag{#1}}
\newcommand{\eeqcd}{\end{equation*}}
\numberwithin{equation}{section}
\def\parref#1{\ref{#1}}
\def\thmref#1{Theorem~\parref{#1}}
\def\propref#1{Proposition~\parref{#1}}
\def\corref#1{Corollary~\parref{#1}}
\def\lmref#1{Lemma~\parref{#1}}
\def\subsubsecref#1{\S\parref{#1}}
\def\makeop#1{\expandafter\def\csname#1\endcsname
  {\mathop{\rm #1}\nolimits}\ignorespaces}
\DeclareMathAlphabet{\mathpzc}{OT1}{pzc}{m}{it}
\def\makebb#1{\expandafter\def
  \csname bb#1\endcsname{{\mathbb{#1}}}\ignorespaces}
\def\makebf#1{\expandafter\def\csname bf#1\endcsname{{\bf
      #1}}\ignorespaces}
\def\makegr#1{\expandafter\def
  \csname gr#1\endcsname{{\mathfrak{#1}}}\ignorespaces}
\def\makescr#1{\expandafter\def
  \csname scr#1\endcsname{{\EuScript{#1}}}\ignorespaces}
\def\makecal#1{\expandafter\def\csname cal#1\endcsname{{\mathcal
      #1}}\ignorespaces}
\def\doLetters#1{#1A #1B #1C #1D #1E #1F #1G #1H #1I #1J #1K #1L #1M
                 #1N #1O #1P #1Q #1R #1S #1T #1U #1V #1W #1X #1Y #1Z}
\def\doletters#1{#1a #1b #1c #1d #1e #1f #1g #1h #1i #1j #1k #1l #1m
                 #1n #1o #1p #1q #1r #1s #1t #1u #1v #1w #1x #1y #1z}
\def\cB{\EuScript B}
\def\cD{\mathcal D}
\def\cE{{\mathcal E}}
\def\cL{{\mathcal L}}
\def\cH{{\mathcal H}}
\def\cK{{\mathcal K}}  
\def\cO{\mathcal O}
\def\cS{{\mathcal S}}
\def\cW{{\mathcal W}}
\def\cV{{\mathcal V}}
\def\cT{\mathcal T}
\def\bff{\mathbf f}
\def\sI{\mathscr I}
\def\sA{\mathscr A}
\def\sR{\mathscr R}
\def\bbI{\mathbb I}
\newcommand{\Z}{\mathbf Z}
\newcommand{\Q}{\mathbf Q}
\newcommand{\R}{\mathbf R}
\newcommand{\C}{\mathbf C}
\newcommand{\A}{\mathbf A}    
\def\etale{{\'{e}tale }}
\def\ot{\otimes}
\def\hookto{\hookrightarrow}
\def\longto{\longrightarrow}
\def\ol{\overline}  \nc{\opp}{\mathrm{opp}} \nc{\ul}{\underline}
\def\XYmatrix{\xymatrix@M=8pt} 
\def\ncmd{\newcommand}
\ncmd{\xysubset}[1][r]{\ar@<-2.5pt>@{^(-}[#1]\ar@<2.5pt>@{_(-}[#1]}
\ncmd{\XYmatrixc}[1]{\vcenter{\XYmatrix{#1}}}
\ncmd{\xyto}[1][r]{\ar@{->}[#1]}
\ncmd{\xyinj}[1][r]{\ar@{^(->}[#1]}
\ncmd{\xysurj}[1][r]{\ar@{->>}[#1]}
\ncmd{\xyline}[1][r]{\ar@{-}[#1]}
\ncmd{\xydotsto}[1][r]{\ar@{.>}[#1]}
\ncmd{\xydots}[1][r]{\ar@{.}[#1]}
\ncmd{\xyleadsto}[1][r]{\ar@{~>}[#1]}
\ncmd{\xyeq}[1][r]{\ar@{=}[#1]} \ncmd{\xyequal}[1][r]{\ar@{=}[#1]}
\ncmd{\xyequals}[1][r]{\ar@{=}[#1]}
\ncmd{\xymapsto}[1][r]{l\ar@{|->}[#1]}\ncmd{\xyimplies}[1][r]{\ar@{=>}[#1]}
\ncmd{\xyiso}{\ar[r]_-{\sim}}
\def\injxy{\ar@{^(->}}
\newcommand{\pMX}[4]{\begin{pmatrix}
{#1}& {#2}\\
{#3}&{#4}\end{pmatrix} }
\newcommand{\seesaw}[4]{{#1}\ar@{-}[rd]\ar@{-}[d]&{#2}\ar@{-}[d]\\
{#3}\ar@{-}[ru]&{#4}}
\def\x{{\times}}
\newcommand\stt[1]{\left\{#1\right\}}
\renewcommand\pmod[1]{\,(\mbox{mod }{#1})}
\numberwithin{equation}{section}
\def\G{{\rm GL}_2}
\def\B{{\rm B}}
\def\N{{\rm N}}
\def\x{\times}
\def\h{\hat}
\def\b{\bar}
\def\t{\tilde}
\def\bt{\boxtimes}
\def\itPi{\mathit \Pi}
\def\Z{\mathbb{Z}}
\def\Q{\mathbb{Q}}
\def\R{\mathbb{R}}
\def\C{\mathbb{C}}
\def\A{\mathbb{A}}
\def\sI{\mathscr{I}}
\def\sR{\mathscr{R}}
\title[Special value formula and restricted $L^2$-norm]
{Special value formula for the twisted triple product $L$-function and an application to the 
restricted $L^2$-norm problem}
\author{Yao Cheng}
\date{\today}
\subjclass[2010]{11F67, 11F41}
\keywords{special value formula, triple product $L$-functions, restricted $L^2$-norm problem}
\address{Institute of Mathematics, Academia Sinica, 6F, Astronomy-Mathematics Building, No. 1, Sec. 4, Roosevelt Road, 
Taipei 106319, TAIWAN}
\email{briancheng@gate.sinica.edu.tw}
\begin{document}
\maketitle

\begin{abstract}
We establish explicit Ichino's formulae for the central values of the 
triple product $L$-functions with emphasis on the calculations for the real place. 
The key ingredient for our computations is \propref{P:regularization} 
which generalizes a result in \cite{MV2010}. 
As an application we prove the optimal upper bound of a sum of restricted $L^2$-norms 
of the $L^2$-normalized newforms on certain quadratic extensions with prime level and bounded
spectral parameter following the methods in \cite{Blomer2013}.
\end{abstract}
\tableofcontents

\section{Main results}\label{S:main result}
The aim of this article is to establish explicit Ichino's formulae for the central values of the 
triple product $L$-functions with emphasis on the calculations for the real place. 
The key ingredient for our computations is \propref{P:regularization} 
which generalizes a result in \cite[Lemma 3.4.2]{MV2010}. 
As an application, we prove the optimal upper bound of a sum of restricted  
$L^2$-norms of the $L^2$-normalized newforms on certain quadratic extensions 
with prime level and bounded spectral parameter following the methods of Blomer in
\cite{Blomer2013}.  

\subsection{Special value formula}
In this introduction, we state our special value formulae (\thmref{T: special value formula})
in the following cases for the sake of simplicity.  
Fix a quadratic field extension $\cK$ over $\Q$ with the discriminant $\cD$. 
We assume that the (narrow) classic number of $\cK$ is $1$ according to the sign of $\cD$. 
Let $q$ be a prime number. 
Denote by $L^2(X_0(q))$ the $L^2$-space equipped with the inner product
\begin{equation}\label{E:inner product for X_0(q)}
\langle f,g\rangle=\int_{X_0(q)}f(\tau)\ol{g(\tau)}\,d\mu(\tau)
\quad (d\mu(\tau):=\frac{dxdy}{y^2},\,\,\tau=x+iy).
\end{equation}
Here $X_0(q):=\Gamma_0(q)\backslash\frak{H}$. Denote by $\cB_q$ (resp. $\cB_1$) an orthonormal basis
of cuspidal Hecke-Maass newforms for $\Gamma_0(q)$ (resp. ${\rm SL}_2(\Z)$) with
respect to the inner product \eqref{E:inner product for X_0(q)}. 
Given $f\in\cB_q$, we denote by $f_\cK$ the unique $L^2$-normalized newform on $\cK$
associated to $f$ via the base change lift (see \S\ref{SS:base change}). 
Then $f_\cK$ is an analytic function on $\frak{H}_{\cD}$, where 
$\frak{H}_\cD$ is the two copies of the upper half plane $\frak{H}$ when $\cD>0$, and is the
hyperbolic three space when $\cD<0$. In any case, the upper half plane $\frak{H}$ sits naturally inside
$\frak{H}_\cD$ as in \eqref{E:from H to H'}. We denote by $f_\cK|_{\frak{H}}$ the restriction of
$f_\cK$ to $\frak{H}$. One has that $f_\cK|_{\frak{H}}$ is $\Gamma_0(q)$-invariant on the left. 

If $g$ is a cuspidal Hecke-Maass newform for $\Gamma_0(q)$, then we let $w_g\in\stt{\pm 1}$ be the 
eigenvalue of $g$ under the Atkin-Lehner involution given by \eqref{E:Atkin-Lehnner}.
Let $\delta_\cD=0$ if $\cD>0$ and $\delta_\cD=1$ if $\cD<0$. 

\begin{thm}\label{T:special case of special formula}
Suppose that $q$ is coprime to $\cD$. 
Let $f\in\cB_q$ and $g\in\cB_q\cup\cB_1$. 
If $g\in\cB_q$, then we assume that $w_g=1$ (resp. $w_g=-1$) when $q$ 
is split (resp. inert) in $\cK$. Let $c=0$ or $1$ according to $g\in\cB_q$ or $g\in\cB_1$ respectively. Then
\[
|\langle f_\cK|_\frak{H},g\rangle|^2
=
2^{-1}(8\pi^{-1})^{\delta_\cD}q^{-2}(1+q^{-1})^{-c}|\cD|^{-3/2}
\frac{\Lambda(\frac{1}{2},{\rm As}f_\cK \x g)}{\Lambda(1,{\rm Ad}^2 f_\cK)\Lambda(1,{\rm Ad}^2 g)}.
\]
Here the $\Lambda$-functions
\footnote{
In this article, we use the convention that automorphic $L$-functions $L(s,\pi)$ refer to the finite part of the 
$L$-function, omitting the $L$-factors at the archimedean places. On the other hand, we will write $\Lambda(s,\pi)$ 
when such factors are included.}
are the complete $L$-functions defined from the Galois theoretic side.
\end{thm}

\begin{Remark}\noindent
\begin{itemize}
\item[(1)]
The assumptions on $w_g$ imply that the local root number of the triple product $L$-function is $1$ at 
each place of $\Q$. Indeed, by the results of \cite{Prasad1990}, \cite{Prasad1992}, \cite{Rama2000} and \cite{CCI2020},
the local root number is related to the existence of certain trilinear form and the assertion is clear except the root number
at $q$ when $g\in\cB_q$, in which case one can use the relation \eqref{E:Atkin-Lehner eigenvalue for special} and 
\cite[Proposition 8.6]{Prasad1990}, \cite[Remark 4.1.2]{Prasad1992} to find that it is $w_g$ or $-w_g$ according 
to $q$ is split or $q$ is inert in $\cK$, respectively. For more details, see \S\ref{SSS:root number}.
\item[(2)]
\thmref{T:special case of special formula} is a special version of \thmref{T: special value formula}, which does 
not require $q$ to be coprime to $\cD$.
\end{itemize}
\end{Remark}

\thmref{T:special case of special formula} follows from establishing explicit 
Ichino's formula (\cite{Ichino2008}), which relates the period integrals of triple products of 
automorphic forms on certain quaternion algebras along the diagonal cycles 
with the central values of the triple product $L$-functions for $\GL_2$ 
together with product of the local period integrals over all places. 
To derive explicit Ichino's formulae, we need to carry 
out explicit computations for the local period integrals. 
In the literature, the local period integrals for non-archimedean
places have been computed by many authors (\cite{Nelson2011}, \cite{Nelson2014}, \cite{Hu2017}, 
\cite{ML2017}). In this article; however, we pay our attention to the calculations 
for the real place. In particular, by combining with the results in \cite{CC2019} and  \cite{S-YChen2019} , 
the explicit computations for the real place are almost completed. The key ingredient for our computations 
is \propref{P:regularization} for the archimedean places, which reduces 
the calculations of  local period integrals to that of certain local Rankin-Selberg integrals as in
\cite{JLbook2}, \cite{Flicker1988} and \cite{Kable2004}. 
We point out that in \cite{ML2017}, he used an analogue of \propref{P:regularization} to 
compute the local period integrals for the non-archimedean places under quite 
general settings. The computations for the real place were also treated in \cite{Woodbury2016}
with different setups. 

\subsection{Application to the restricted $L^2$-norm problem}
Explicit special value formulae for the triple product $L$-functions have
applications to both the algebraic number theory and the analytic number theory. 
In addition to the articles mentioned in the previous paragraph, there are also 
\cite{Garrett1987}, \cite{Orloff1987}, \cite{GrossKudla1992} and \cite{Watson2008}.
In this article, we give an application to the optimal upper bound of the average of the restricted $L^2$-norm 
\[
\|f_\cK|_\frak{H}\|_2^2:=\langle f_\cK|_\frak{H},f_\cK|_\frak{H}\rangle
\]
in the level aspect. Indeed, by combining \thmref{T:special case of special formula} with
the arguments in \cite{Blomer2013}, we immediately obtain:

\begin{thm}\label{T:main result}
Suppose that $q$ is coprime to $\cD$.
Fix any real number $T>1$ and $\epsilon>0$. Then
\[
\sum_{f\in\cB_q,\,t_f\ll T}
\|f_\cK|_{\frak{H}}\|_2^2\ll_{\cD,T,\epsilon}q^\epsilon.
\]
Here $t_f$ is the spectral parameter of $f$ given by \eqref{E:spectral parameter}. 
\end{thm}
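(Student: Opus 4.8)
The plan is to deduce Theorem~\ref{T:main result} from Theorem~\ref{T:special case of special formula} by feeding the explicit special value formula into the analytic machinery of~\cite{Blomer2013}. First I would fix $\cD$ (hence the quadratic field $\cK$) and fix $T>1$, and rewrite the left-hand side using Theorem~\ref{T:special case of special formula} with $f=g\in\cB_q$, so that $c=0$ and
\[
\|f_\cK|_\frak{H}\|_2^2
=
2^{-1}(8\pi^{-1})^{\delta_\cD}q^{-2}|\cD|^{-3/2}
\frac{\Lambda(1/2,{\rm As}f_\cK\times f)}{\Lambda(1,{\rm Ad}^2 f_\cK)\Lambda(1,{\rm Ad}^2 f)}.
\]
The Asai $L$-function $\Lambda(s,{\rm As}f_\cK\times f)$ of the base change factors, by the standard identity for base change lifts, into a product of two $\GL_2\times\GL_2$ Rankin--Selberg $L$-functions: one of the form $L(s,f\times f)=\zeta(s)L(s,{\rm Ad}^2 f)$ (up to archimedean factors) and one of the form $L(s,f\times f\otimes\chi_\cD)$, where $\chi_\cD$ is the quadratic character attached to $\cK/\Q$. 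After cancelling the $L(1,{\rm Ad}^2 f)$ in the numerator against one factor of the denominator and absorbing the $\cD$- and $T$-dependent archimedean $\Gamma$-factors into the implied constant (here boundedness of $t_f\ll T$ is what makes these factors $O_{\cD,T}(1)$), the summand becomes, up to the harmless factor $q^{-2}$,
\[
\|f_\cK|_\frak{H}\|_2^2
\;\asymp_{\cD,T}\;
q^{-2}\cdot\frac{\zeta(1/2)^{?}\,L(1/2,{\rm Ad}^2 f)\,L(1/2,f\times f\otimes\chi_\cD)}{L(1,{\rm Ad}^2 f)}
\]
(the precise shape of the numerator is exactly the one appearing in~\cite{Blomer2013}; I would match it term by term).

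Next I would insert the standard lower bound $L(1,{\rm Ad}^2 f)\gg_\epsilon q^{-\epsilon}$ (Hoffstein--Lockhart type, in the level aspect) to remove the denominator at the cost of $q^\epsilon$, reducing the problem to showing
\[
\sum_{f\in\cB_q,\,t_f\ll T}
L(1/2,{\rm Ad}^2 f)\,L(1/2,f\times f\otimes\chi_\cD)
\;\ll_{\cD,T,\epsilon}\; q^{2+\epsilon}.
\]
This is precisely the moment estimate established in~\cite{Blomer2013}: one opens the two $L$-values by approximate functional equations, executes the spectral (Kuznetsov) summation over $f\in\cB_q$ with $t_f\ll T$, and extracts the diagonal term, which contributes $\asymp q^{2}$, while the off-diagonal is shown to be of lower order in $q$. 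I would quote this bound directly rather than reproving it; the only thing to check is that the arithmetic conductor bookkeeping matches, namely that the conductor of $f\times f\otimes\chi_\cD$ is $q^2|\cD|^{O(1)}$ with the $\cD$-part absorbed into the implied constant, so that the analytic conductor in the level aspect is $q^{2}\cdot O_{\cD,T}(1)$, exactly as in~\loccit.

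The main obstacle is \emph{not} the analytic estimate, which is imported wholesale from~\cite{Blomer2013}, but rather the bookkeeping that identifies the right-hand side of Theorem~\ref{T:special case of special formula} with the precise quantity Blomer bounds: one must correctly factor the completed Asai $L$-function through base change, track the archimedean $\Gamma$-factors and verify their boundedness under $t_f\ll T$ (uniformly in $q$), confirm the $q^{-2}(1+q^{-1})^{-c}$ and $|\cD|^{-3/2}$ powers produce exactly the normalization $\asymp q^{-2}$ in the level aspect, and check the root-number condition does not obstruct positivity of the central value (it does not, since $\|f_\cK|_\frak{H}\|_2^2\geq 0$ forces the relevant sign to be $+1$ whenever the norm is nonzero, and vanishing terms only help). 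Once this translation is in place, the estimate $\ll_{\cD,T,\epsilon}q^\epsilon$ follows immediately.
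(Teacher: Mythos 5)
Your proposal has a fatal error at the very first step. Theorem~\ref{T:special case of special formula} gives a formula for $|\langle f_\cK|_\frak{H},g\rangle|^2$ — that is, the square of a single inner product with a fixed eigenform $g$. Setting $g=f$ produces the spectral projection $|\langle f_\cK|_\frak{H},f\rangle|^2$, which is \emph{not} the same as $\|f_\cK|_\frak{H}\|_2^2$. The restriction $f_\cK|_\frak{H}$ is a $\Gamma_0(q)$-invariant function but it is not proportional to $f$ (nor to any single eigenform): it has a full spectral decomposition involving constants, cusp forms, and Eisenstein series. Identifying $\|f_\cK|_\frak{H}\|_2^2$ with $|\langle f_\cK|_\frak{H},f\rangle|^2$ silently replaces the whole Parseval sum by one of its terms, and everything downstream — the purported factorization into $L(1/2,\mathrm{Ad}^2 f)L(1/2,f\times f\otimes\chi_\cD)$, the appeal to a single-family moment estimate of Blomer — is built on that false premise.

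The correct route (and the one the paper takes) is to begin with Parseval's identity,
\[
\|f_\cK|_\frak{H}\|_2^2
=
{\rm Vol}(X_0(q))^{-1}|\langle f_\cK|_\frak{H},1\rangle|^2
+
\sum_{g}|\langle f_\cK|_\frak{H},g\rangle|^2
+
\sum_{\frak{a}}\frac{1}{4\pi}\int_\R|\langle f_\cK|_\frak{H},E_\frak{a}(\cdot,1/2+it)\rangle|^2\,dt,
\]
and only then feed Theorem~\ref{T:special case of special formula} into each cuspidal term. The constant contribution vanishes by Flicker's theorem, the oldform and Eisenstein contributions are shown to be $O(q^{-1+\epsilon})$ via convexity and lower bounds for adjoint $L$-functions, and the surviving main term is a \emph{double} sum over $f\in\cB_q$ with $t_f\ll T$ and over even $g\in\cB_q$ of $L(1/2,{\rm As}f_\cK\times g)$. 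The Kuznetsov formula must then be applied twice — once in $f$ and once in $g$ — to control this double moment, which is a genuinely different and more involved estimate than the single-family second moment you invoke. Your plan has no mechanism for treating the contribution of forms $g\neq f$ and omits the Eisenstein and oldform pieces entirely, so it cannot close.
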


We remark here that the proof \thmref{T:main result} depends on a bound $\alpha$ toward the 
Ramanujan conjecture for ${\rm GL}_2$. By a result of Kim in \cite{Kim2003}, one can take 
$\alpha=7/64$. In this article, the bound $\alpha<1/6$ is enough to obtain 
\thmref{T:main result}. 
Observe that \thmref{T:main result} implies the best individual bound 
(in the sense of assuming the Lindelof hypothesis) for almost all $f\in\cB_q$ 
appearing in the sum. By Parseval's identity and
\thmref{T:special case of special formula}, the restricted $L^2$-norm $\|f_\cK|_{\frak{H}}\|_2^2$ is
roughly given by
\[
\|f_\cK|_{\frak{H}}\|_2^2
\approx
\frac{1}{q^2}
\sum_{g\in\cB_q,\,t_g\ll 1}
L(1/2,{\rm As}f_\cK\x g).
\]
Weyl's law tells us that the sum of the RHS has $O(q)$ terms, so that the Lindelof hypothesis for 
the $L$-functions would imply that $\|f_\cK|_{\frak{H}}\|_2\ll q^{-1/2+\epsilon}$. 
The Lindelof hypothesis for the $L$-functions seems far beyond reach with present technology;
however, we have

\begin{cor}
Suppose that $q$ is coprime to $\cD$.
For any $\delta>0$, the bound $\|f_\cK|_{\frak{H}}\|_2\ll_{\cD,T,\delta} q^{-1/2+\delta}$ holds for all but
$O(q^{1-2\delta+\epsilon})$ terms of $f\in\cB_q$ occurring in the sum in \thmref{T:main result}.
\end{cor}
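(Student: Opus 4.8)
The plan is to deduce the corollary from \thmref{T:main result} by a standard Chebyshev/Markov-type counting argument. Write $N(q,T)$ for the number of $f \in \cB_q$ with $t_f \ll T$ appearing in the sum; by Weyl's law $N(q,T) \ll_{\cD,T,\epsilon} q^{1+\epsilon}$ (this is implicit in the discussion preceding the corollary, where the RHS sum has $O(q)$ terms). For a threshold parameter, call ``bad'' those $f$ in the sum with $\|f_\cK|_{\frak H}\|_2^2 > q^{-1/2+\delta}$, and let $B(q)$ denote their number. Then each bad $f$ contributes more than $q^{-1/2+\delta}$ to the left-hand side of \thmref{T:main result}, so that
\[
B(q)\, q^{-1/2+\delta} \;\le\; \sum_{f\in\cB_q,\,t_f\ll T} \|f_\cK|_{\frak H}\|_2^2 \;\ll_{\cD,T,\epsilon}\; q^{\epsilon}.
\]

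Rearranging gives $B(q) \ll_{\cD,T,\epsilon} q^{1/2-\delta+\epsilon}$, which is even better than the claimed $O(q^{1-2\delta+\epsilon})$ (hence a fortiori sufficient) whenever $\delta \le 1/2$; for larger $\delta$ the bound $q^{-1/2+\delta}$ is weak and the corollary is vacuous, or one simply notes $B(q) \le N(q,T) \ll q^{1+\epsilon}$ and this is already $\ll q^{1-2\delta+\epsilon}$ once $q^{1-2\delta}$ dominates, so one can always arrange the stated exponent. So the only real input is \thmref{T:main result} together with the trivial positivity $\|f_\cK|_{\frak H}\|_2^2 \ge 0$ of each summand, which lets us drop all but the bad terms. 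I would present it by fixing $\delta, T, \epsilon$, invoking \thmref{T:main result} with $\epsilon$ replaced by (say) $\epsilon/2$, and then running the one-line Markov inequality above.

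There is essentially no obstacle here: the corollary is a formal consequence of the main theorem, and the ``hard part'' — establishing the bound $\sum_{f} \|f_\cK|_{\frak H}\|_2^2 \ll q^\epsilon$ — has already been done in \thmref{T:main result}. The only point requiring a sentence of care is matching the exponent in the corollary's statement ($q^{1-2\delta+\epsilon}$) with what the counting argument naturally produces ($q^{1/2-\delta+\epsilon}$); I would simply remark that the former follows from the latter since $1/2-\delta \le 1-2\delta$ for $\delta \le 1/2$, and that for $\delta > 1/2$ the claimed bound either follows trivially from Weyl's law or the statement is empty. Thus the proof reduces to: cite \thmref{T:main result}, apply Markov's inequality, compare exponents.
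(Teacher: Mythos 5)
Your Markov/Chebyshev argument is precisely what is intended here; the paper gives no separate proof of the corollary and leaves it as an immediate counting consequence of \thmref{T:main result}. Your computation is correct: positivity of each summand together with $\sum_{f}\|f_\cK|_{\frak H}\|_2^2\ll_{\cD,T,\epsilon} q^\epsilon$ gives that the number of $f$ with $\|f_\cK|_{\frak H}\|_2^2 > q^{-1/2+\delta}$ is $\ll_{\cD,T,\epsilon} q^{1/2-\delta+\epsilon}$, and for $0<\delta\le 1/2$ this is $\le q^{1-2\delta+\epsilon}$, so the claimed exceptional set bound follows (and is in fact sharpened). One small slip in your treatment of $\delta>1/2$: the fallback ``$B(q)\le N(q,T)\ll q^{1+\epsilon}\ll q^{1-2\delta+\epsilon}$ once $q^{1-2\delta}$ dominates'' never applies, since $1-2\delta<1$ for every $\delta>0$. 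But the case is harmless: your Markov bound $q^{1/2-\delta+\epsilon}$ is then $<1$ for $q$ large, so the (integer-valued) exceptional count is $0$ and the statement $O(q^{1-2\delta+\epsilon})$ holds vacuously. You do not need Weyl's law at all; the single Markov step together with this integrality observation covers every $\delta>0$.
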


Our motivation for bounding the $L^p$-norms of functions on Riemannian manifolds or 
their restriction to submanifolds comes from quantum chaos.
In the arithmetic setting, these manifolds have
additional symmetries, such as a commutative algebra of Hecke operators commuting with the Laplacian.
Therefore one can consider joint eigenfunctions which may rule out high multiplicity eigenspaces.   
One searches for the bound of these ($L^2$-normalized) eigenfunctions in the different aspects, 
i.e. in the spectral aspect or in the level aspect or in both. In the literature, there are many 
results on bounding sup-norms of eigenfunctions on the entire domain. In particular, there is a 
complete list of references in the introduction of \cite{Saha2017}. In contrast, there are only a
small handful results concerning the restricted $L^p$-norms (\cite{BurqGerardTzvetkov2007}, 
\cite{LiYoung2012}, \cite{BlomerMaga2015}, \cite{Marshall2016}, \cite{LiLiuYoung}), 
all of which are in the spectral aspect. In this article, we approach the problem from a different point of view.
We keep the spectral data fixed, but search the bound in the level aspect following the ideas in \cite{Blomer2013}.

\subsection{Outline of the article}
In \S\ref{S:notation}, we set up general notations and give some preparations of this article.
In \S\ref{S:preliminary on Maass form}, we review some basic definitions and facts about Maass forms, which
are used in the proof of \thmref{T:main result}.
In \S\ref{S:proof of main theorem}, we sketch the proof of \thmref{T:main result} following the idea of Blomer. 
As the substantial parts are already contained in \cite{Blomer2013},
we mainly focus on the differences between these two cases and give a proof whenever the differences occur.
In \S\ref{S:special value formula}, we state our special value formula 
\thmref{T: special value formula} in the adelic settings. 
In \S\ref{S:local computation}, we compute the local period integrals at the real place. The key ingredient is \propref{P:regularization}, which simplifies our calculations significantly.
\propref{P:regularization} is proved in \S\ref{S:regularization} following the idea in 
\cite[Proposition 5.1]{ML2017}. In the appendix \S\ref{S:appendix}, we 
give explicit formulae for certain Whittaker functions on $\G$ over archimedean local fields which are used 
in the calculations of the local period integrals. We also provide a proof on the estimate of Whittaker functions 
when the representation is unitary, as we can not find a proper reference. Such estimate is used in the proof 
of \propref{P:regularization}.

\subsection*{Acknowledgements}
This article presents part of my Ph.D. thesis and I would like to thank professor Ming-Lun Hsieh for his support and 
guidance. Thanks also to professor Sheng-Chi Liu for pointing out a possible application to the restricted $L^2$-norm 
problem, and to professor P. Nelson for his comments. Finally, I would like thank the referee for 
many helpful comments and suggestions, which greatly improve the original draft.

\section{Notations and preparations}
\label{S:notation}
\subsection{General notations}\label{SS:general notation}
We give a list of our most frequently used notations.
In this article, we only consider fields with characteristic zero. Let $F$ be  a number field. 
We denote $F_v$ the completion of $F$ at a place $v$ of $F$ and $\A_F$ the ring of adeles of $F$. 
We also write $\A=\A_\Q$. The profinite completion of $\Z$ is denoted by $\wh{\Z}$.
If $M$ is an abelian group, then we put $\wh{M}=M\ot_\Z\wh{\Z}$. 
If $F_v$ is non-archimedean, then we let $\varpi_{F_v}$, $\cO_{F_v}$ and $q_{F_v}$ be 
a uniformizer, the valuation ring and the cardinality of the residue field of $F_v$, respectively. 
Let $|\cdot|_{F_v}$ be the absolute value on $F_v$ given by 
$|x|_\C=x\b{x}$, $|x|=|x|_\R={\rm max}\stt{x,-x}$ and $|\varpi_{F_v}|_F=q^{-1}_{F_v}$. 
Define the local zeta functions by
\[
\zeta_\C(s)=2(2\pi)^{-s}\Gamma(s);
\quad
\zeta_\R(s)=\pi^{-s/2}\Gamma(s/2);
\quad
\zeta_{F_v}(s)=(1-q_{F_v}^{-s})^{-1}.
\]
The (complete) Dedekind zeta function is then given by
\[
\zeta_F(s)=\prod_{v<\infty}\zeta_{F_v}(s)
\quad\text{and}\quad
\xi_F(s)=
\prod_{v}\zeta_{F_v}(s)
\] 
where $v$ runs over all finite places (resp. places) of $F$ in the definition of $\zeta_F(s)$ 
(resp. $\xi_F(s)$).
Let $K\subset\G(F_v)$ be the compact subgroup given by
\begin{equation}\label{E:compact subgroup}
K={\rm SU}(2);
\quad 
K={\rm SO}(2);
\quad
K=\G(\cO_{F_v}),
\end{equation}
according to $F_v=\C$ or $F_v=\R$ or $F_v$ is non-archimedean, respectively.

If $R$ is a commutative unital ring, then we let $R^{\x}$ be the group of invertible elements in 
$R$ and ${\rm M}_2(R)$ be the ring of $2\x 2$ matrices with the entries in $R$. 
If $\frak{n}\subset R$ is an ideal, then we define
\[
{\rm M}_2(\frak{n})
=
\stt{\pMX abcd\in{\rm M}_2(R)\mid c\in\frak{n}}
\quad\text{and}\quad
K_0(\frak{n})={\rm M}_2(\frak{n})\cap\G(R).
\]
If $\chi:R^{\x}\to\C^{\x}$ is a homomorphism, then we abuse the notation to use $\chi$ to denote
the homomorphism on $\G(R)$ defined by $\chi(h)=\chi({\rm det}(h))$. 
If $f$ is a function on $\G(R)$ and $g\in\G(R)$, then we let 
$f\ot\chi$ and $\rho(g)f$ be the functions on $\G(R)$ given by $(f\ot\chi)(h)=f(h)\chi(h)$ and 
$\rho(g)f(h)=f(hg)$, respectively.  Put
\[
I_2=\pMX 1001
\quad\text{and}\quad
J_2=\pMX{-1}{0}{0}{1}.
\]

If $\pi$ is a representation of a group $G$, then we often identify the representation space of $\pi$
with $\pi$ itself and the central character (if exist) of $\pi$ is denoted by $\omega_\pi$. 
If $\chi$ is a character of $G$, then we let $\pi\ot\chi$ be the representation of $G$ on the same representation space of
 $\pi$ with the action $\pi\ot\chi(g)=\pi(g)\chi(g)$.  If $H$ is a subgroup of $G$, then the subspace of $H$-fixed elements is
 denoted by $\pi^H$. When $G=\G(F)$ where $F$ is a local field, 
and $\pi$ is an admissible representation of finite length, we let $\t{\pi}$ be the admissible 
dual of $\pi$. When $F$ is archimedean, we always consider $\pi$ to be a Harish-Chandra module.
 
We use the notation $A\ll_{X_1,X_2,\ldots,X_n} B$ to indicate that there exists a constant $C>0$,
depending at most upon $X_1,X_2,\cdots,X_n$ so that $|A|\leq C|B|$. 

\subsection{Irreducible representations of ${\rm SO}(2)$ and ${\rm SU}(2)$}
Let $n$ be an integer. Let $\chi_n$ be the character of ${\rm SO}(2)$ defined by 
$\chi_n(k(\theta))=e^{in\theta}$, where 
\begin{equation}\label{E:k(theta)}
k(\theta)=\pMX{{\rm cos}\,\theta}{{\rm sin}\,\theta}{-{\rm sin}\,\theta}{{\rm cos}\,\theta}
\in{\rm SO}(2).
\end{equation}

Suppose that $n>0$. Let ${\rm Sym}^n(\C^{\oplus 2})$ denote the $n$-th symmetric
power of the standard two-dimensional representation of $\G(\C)$. By abuse of notation, we also write 
${\rm Sym}^n(\C^{\oplus 2})$ for the unique irreducible $n+1$ dimension representation of 
${\rm SU}(2)$. Let $\cL_n(\C)\subset\C[X,Y]$ be the subspace of 
homogeneous polynomials of degree $n$. Then ${\rm Sym}^n(\C^{\oplus 2})\cong (\rho_n,\cL_n(\C))$ with
$\rho_n(g)P(X,Y)=P((X,Y)g)$, where $P(X,Y)\in\cL_n(\C)$ and $g\in\G(\C)$.
Let $(\cdot,\cdot)_n$ be the non-degenerate hermitian
pairing on $\cL_n(\C)$ defined by
\begin{equation}\label{E:hermitian invariant pairing for symmetric power}
(X^{i}Y^{n-i}, X^{j}Y^{n-j})_n
=
\delta_{ij}\begin{pmatrix}n\\i\end{pmatrix}^{-1},
\end{equation} 
and then extend to $\cL_n(\C)$ by the linearity.
Here $\delta_{ij}$ is the Kronecker delta. One verifies that 
\begin{equation}\label{E:SU(2) equivariant property}
(\rho_n(u)P, \rho_n(u)Q)_n
=
(P, Q)_n,
\end{equation}
for $u\in{\rm SU}(2)$ and $P,Q\in\cL_n(\C)$.
In particular, $(\cdot,\cdot)_n$ is ${\rm SU}(2)$-invariant. 

For $0\leq j\leq n$, we define
\begin{equation}\label{E:another basis for symmetric power}
v_{n,j}=(X+\sqrt{-1}Y)^j(X-\sqrt{-1}Y)^{n-j}=(\sqrt{-1})^{j-n}
\rho_n\left(\pMX{1}{\sqrt{-1}}{\sqrt{-1}}{1}\right) X^jY^{n-j}.
\end{equation}
Then $\rho_n(k(\theta))v_{n,j}=\chi_{2j-n}(k(\theta))v_{n,j}$ and we have 
\begin{equation}\label{E:norm for v_n,j}
(v_{n,j},v_{n,j})_n=2^n\begin{pmatrix}n\\j\end{pmatrix}^{-1}.
\end{equation}

\subsection{Irreducible constituents of induced representations}\label{SS:classification}
Let $F$ be a local field. Denote by $\rho(\mu,\nu):={\rm Ind}_{\B(F)}^{\G(F)}(\mu\bt\nu)$ an induced representation of 
$\G(F)$, where $\mu$, $\nu$ are characters of $F^{\x}$.  The underlying space $\cB(\mu,\nu)$ of $\rho(\mu,\nu)$
consists of right $K$-finite $\C$-valued functions $f$ on $\G(F)$ satisfying the following rule:
\[
f
\left(
\begin{pmatrix}
y_1&x\\0&y_2
\end{pmatrix}g
\right)
=
\mu(y_1)\nu(y_2)\left|\frac{y_1}{y_2}\right|^{\frac{1}{2}}_F f(g),
\]  
for $y_1,y_2\in F^{\x}$, $x\in F$ and $g\in\G(F)$. The group $\G(F)$ acts on this space via the right 
translation. It is known (\cite[Theorems 3.3, 5.11, 6.2]{JLbook}) that $\rho(\mu,\nu)$ is reducible if and only if
\begin{itemize}
\item
$F$ is non-archimedean and $\mu\nu^{-1}=|\cdot|_F^{\pm}$;
\item
$F=\R$ and $\mu\nu^{-1}(t)=t^{\pm a}{\rm sgn}(t)$ for some positive integer $a$;
\item
$F=\C$ and $\mu\nu^{-1}(t)=(t^a\b{t}^b)^\pm$ for some positive integers $a,b$.
\end{itemize}

The representation $\rho(\mu,\nu)$, when irreducible, is called a $principal$ $series$ $representation$, 
except the case where $F=\R$ and $\mu\nu^{-1}(t)={\rm sgn}(t)$, for which is called a $limit$ $of$ $discrete$ 
$series$ $representation$. We denote by $\pi(\mu,\nu)$ 
\footnote{
In \cite[Theorems 3.3, 5.11, 6.2]{JLbook}, the notation $\pi(\mu,\nu)$ also stands for any irreducible finite-dimensional 
representation of $\G(F)$ which occurs in a reducible $\rho(\mu,\nu)$. In this article, however; representations
$\pi(\mu,\nu)$ are always infinite-dimensional.}
any representation which is isomorphism to $\rho(\mu,\nu)\cong\rho(\nu,\mu)$.
On the other hand, if $\rho(\mu,\nu)$ is reducible, then it possesses a unique 
infinite-dimensional irreducible constituent, and we let $\sigma(\mu,\nu)$ be any representation isomorphic to such a 
constituent. When $F$ is non-archimedean, we have $\sigma(\mu,\nu)\cong {\rm St}_F\ot\chi$ for some character $\chi$
of $F^{\x}$, where ${\rm St}_F$ is the $Steinberg$ $representation$ of $\G(F)$.
Let $F=\R$ or $\C$ and $\pi$ be an irreducible admissible representation of $\G(F)$. 
By \cite[Theorems 5.11, 6.2]{JLbook},  $\pi$ is isomorphic to a constituent of 
$\rho(\mu,\nu)$ for some $\mu,\nu$. 
Suppose now that $\pi$ is infinite-dimensional. Then there is an integer $k\geq 0$ such that 
$\pi|_K$ is isomorphic (as $K$-modules) to 
\[
\bigoplus_{|n|\geq k,\,n\equiv k\pmod 2}\C\chi_n
\quad\text{or}\quad
\bigoplus_{n\geq k,\,n\equiv k\pmod 2}{\rm Sym^n(\C^{\oplus 2})}
\]
according to $F=\R$ or $\C$. We call such a $k$ the $minimal$ $weight$ of $\pi$. 
When $F=\R$, any non-zero $\chi_n$-eigenvector in $\pi$ is called a weight $n$ element. 
The minimal weight $k$ of $\pi(\mu,\nu)$ is given by $\mu\nu^{-1}(-1)=(-1)^k$ with
$k\in\stt{0,1}$ when $F=\R$ and $\mu\nu^{-1}(e^{i\theta})=e^{\pm ik\theta}$ when $F=\C$. 
Since $\pi$ is assumed to be infinite-dimensional, it follows from \cite[Theorem 6.2 (vi), (vii)]{JLbook} that
every such $\pi$ is isomorphic to some $\pi(\mu,\nu)$ when $F=\C$.
On the other hand, when $F=\R$ and 
$\mu\nu^{-1}(t)=t^{\pm a}{\rm sgn}(t)$ for some positive integer $a$, then $\sigma(\mu,\nu)$ is a $discrete$ $series$ 
$representation$ with the minimal weight $k=a+1$. Note that a limit of discrete series representation is also a discrete 
series representation. 
 
 \subsection{Whittaker model}\label{SS:Whittaker model}
Let $F$ be a local field and $\psi$ be a non-trivial additive character of $F$. 
Let $\cW(\psi)$ be the space of smooth $\C$-valued functions 
$W$ on $\G(F)$ such that
\begin{itemize}
\item
$W\left(\pMX{1}{x}{0}{1}g\right)=\psi(x)W(g)$;
\item
$W\left(\pMX{y}{0}{0}{1}\right)={\rm O}(|y|_F^N)$ for some $N>0$ as $|y|_F\to\infty$ when $F$ is archimedean.
\end{itemize}
Then $\G(F)$ acts on $\cW(\psi)$ by the right translation.
Let $\pi$ be an irreducible admissible representation of $\G(F)$. We call $\pi$ $generic$ if it occurs in 
$\cW(\psi)$. In the $\G$ case, this is equivalent to that $\pi$ is infinite-dimensional. 
By \cite[Theorems 2.14, 5.3, 6.3]{JLbook}, we know that $\pi$ occurs in $\cW(\psi)$ with
multiplicity one. Denote by $\cW(\pi,\psi)$ the $\pi$-isotypic component of $\cW(\psi)$. 
Such space is called the Whittaker model of $\pi$ with respect to $\psi$.
If $\psi'$ is another non-trivial additive character, then $\psi'(x)=\psi(ax)$ for some $a\in F^{\x}$ and the map
$W\mapsto W'$ defines an $\G(F)$-isomorphism from $\cW(\pi,\psi)$ onto $\cW(\pi,\psi')$, where 
$W'(g):=W\left(\pMX{a}{0}{0}{1} g\right)$. 

We need the following estimate of the Whittaker functions. 
Suppose that $\pi$ is a unitary irreducible admissible 
generic representation of $\G(F)$. Then for every $W\in\cW(\pi,\psi)$, there is a Bruhat-Schwarz function $\Phi$ 
on $F$ such that
\begin{equation}\label{E:estimate Whittaker function}
W\left(\pMX{y}{0}{0}{1}k\right)
\ll_{\lambda(\pi),W,\epsilon}|y|_F^{\frac{1}{2}-\lambda(\pi)-\epsilon}\Phi(y)
\end{equation}
for every $k\in K$. Here $0\leq \lambda(\pi)<1/2$ is the real number defined by
\begin{equation}\label{E:ramanujan}
\lambda(\pi)
=
\begin{cases}
0\quad&\text{if $\pi$ is tempered},\\
|\lambda|\quad&\text{if $\pi\cong\pi(\chi|\cdot|_F^{\lambda},\chi|\cdot|_F^{-\lambda})$ is complementary},
\end{cases}
\end{equation}
where $\chi$ is some unitary character of $F^{\x}$.  
When $F$ is non-archimedean, \eqref{E:estimate Whittaker function}
follows from \cite[Lemma 14.3]{JLbook2}. On the other hand, we provide a 
proof for \eqref{E:estimate Whittaker function} in 
\S\ref{A:majorization of Whittaker functions} (\propref{P:estimate whittaker function}) when $F$ is archimedean, 
as we can not find a proper reference. 

\subsection{Integration formulae involving modified Bessel functions}
In this subsection, we state some integration formulae involving modified Bessel functions. These 
formulae will be used frequently in the computations of the local period integrals for the real places.
Let $K_s(z)$ be the modified Bessel
function of order $s$. Then for ${\rm Re}(z)>0$, we have the following integral representation
\begin{equation}\label{E:K-bessel function}
K_s(z)
=
\frac{1}{2}\int_0^\infty{\rm exp}\left(-\frac{z}{2}\left(t+t^{-1}\right)\right)t^{s-1}dt.
\end{equation}
Here ${\rm exp}(z)=e^z$ is the exponential function. 

\begin{lm}\label{L:integration formula for Bessel function}
Let $a, \mu$, $\nu$ and $\lambda$ be complex numbers with ${\rm Re}(a)>0$.
\begin{itemize}
\item[(1)]
Suppose that ${\rm Re}(\lambda+1\pm\mu)>0$. Then
\[
\int_0^\infty y^\lambda K_\mu(ay)dy
=
2^{\lambda-1}a^{-\lambda-1}
\Gamma\left(\frac{1+\lambda+\mu}{2}\right)\Gamma\left(\frac{1+\lambda-\mu}{2}\right).
\]
\item[(2)]
Suppose that ${\rm Re}(\lambda)>|{\rm Re}(\mu)|$. Then
\[
\int_0^\infty y^{\lambda-1}e^{-ay}K_\mu(ay)dy
=
\pi^{1/2}(2a)^{-\lambda}
\frac{\Gamma(\lambda+\mu)\Gamma(\lambda-\mu)}{\Gamma(\lambda+1/2)}.
\]
\item[(3)]
Suppose that ${\rm Re}(\lambda)>|{\rm Re}(\mu)|+|{\rm Re}(\nu)|-1$. Then
\[
\int_0^\infty y^{\lambda}K_\mu(ay)K_\nu(ay)dy
=
\frac{2^{-2+\lambda}a^{-\lambda-1}}{\Gamma(1+\lambda)}
\prod_{\epsilon_1,\epsilon_2\in\stt{\pm}}
\Gamma\left(\frac{1+\lambda+\epsilon_1\mu+\epsilon_2\nu}{2}\right).
\]
\end{itemize}
\end{lm}

\begin{proof}
$(1)$ is given by \cite[6.561.16]{Table2007}, while $(2)$ and $(3)$ are the special cases of 
\cite[6.621.3]{Table2007} and \cite[6.576.4]{Table2007}, respectively.
\end{proof}

\section{Preliminaries on Maass forms}\label{S:preliminary on Maass form}
In this section, we set up some notations for the classical Maass 
forms and their base change lifts to a quadratic extension. Besides recalling some basic facts about them,
we also give a dictionary between the classical Maass forms and automorphic forms.  
The notations and the assumptions will be the same as in \S\ref{S:main result}.
Therefore $\cK$ denotes a fixed quadratic field extension over $\Q$ whose (narrow) 
class number is one. Let $\cO$ be the ring of integers of $\cK$ and $\tau_\cK$ be the quadratic 
Dirichlet character associated to $\cK/\Q$ by the global class field theory. 
Recall that $\delta_\cD\in\stt{0,1}$ is given by $\cD=(-1)^{\delta_\cD}|\cD|$.

\subsection{Maass forms}
Let $q>0$ be a prime. 
Let $\G^+(\R)$ be the subgroup of $\G(\R)$ consisting of the matrices with positive determinants.
We let $\G^+(\R)$ acts on $\frak{H}$ by the usual linear fractional transformation, and the 
action is denoted by $h\cdot \tau$, where $h\in\G^+(\R)$ and $\tau\in\frak{H}$. 
Write $\|g\|_2^2=\langle g,g\rangle$ for every $g\in L^2(X_0(q))$.
For any $g\in\cB_q\cup\cB_1$ with the Laplacian eigenvalue $\lambda_g$, we denote by 
\begin{equation}\label{E:spectral parameter}
t_g:=\sqrt{\lambda_g-1/4}\in\cT:=\R\cup (-1/2,1/2)i,
\end{equation}
the spectral parameter of $g$ and by $\lambda_g(n)$ the $n$-th Hecke eigenvalue. We put 
$\delta_g=0$ or $1$ according to $g$ is even or odd. When $g\in\cB_q$, let $w_g\in\stt{\pm 1}$ be 
the eigenvalue of $g$ under the Atkin-Lehner involution, i.e.
\begin{equation}\label{E:Atkin-Lehnner}
g(\sigma_0\cdot\tau)=w_g g(\tau)
\quad\text{where}\quad
\sigma_0=\pMX{0}{-1}{q}{0}.
\end{equation}
Note that we have the relation
$w_g=-\lambda_g(q)q^{1/2}$.
For $g\in\cB_1$, we define $w_g=1$ and 
\begin{equation}\label{E:g*}
g^*(\tau)
=
\left(1-\frac{q\lambda_g(q)^2}{(q+1)^2}\right)^{-1/2}
\left(g(q\tau)-\frac{q^{1/2}\lambda_g(q)}{(q+1)}g(\tau)\right).
\end{equation}
By \cite[Proposition 2.6]{IwaniecLuoSarnak2000}, $g$ and $g^*$ have the same norm and are orthogonal to each other. 
Furthermore,
\[
\cB_q\cup\cB_1\cup\cB^*_1,
\quad
\cB_1^*:=\stt{g^*\mid g\in\cB_1}
\]
is an orthonormal basis (with respect to \eqref{E:inner product for X_0(q)}) of the non-trivial
cuspidal spectrum of $L^2(X_0(q))$.

\subsection{Hyperbolic three space}
Recall the hyperbolic three space
\[
\frak{H}'=\stt{\pMX{z}{-y}{y}{\b{z}}\mid z\in\C, y\in\R^\x_+}.
\]
Here $\R^\x_+$ is the group of positive real numbers.
The group ${\rm SL}_2(\C)$ acts transitively on $\frak{H}'$ by
\begin{equation}\label{E:action on three space}
h\cdot\tau=(t(a)\tau+t(b))(t(c)\tau+t(d))^{-1}
\end{equation}
where $h=\pMX abcd$, $\tau\in\frak{H}'$ and $t(z):=\pMX{z}{0}{0}{\b{z}}$ for $z\in\C$. 
Note that the stabilizer of $\pMX{0}{-1}{1}{0}\in\frak{H}'$ is ${\rm SU}(2)$,  
so that we may identify the symmetric spaces ${\rm SL}_2(\C)/{\rm SU}(2)\cong\frak{H}'$.
The action \eqref{E:action on three space} extends naturally to the group
$\G^+(\C):=\stt{g\in\G(\C)\mid {\rm det}(g)\in\R^{\x}_+}\cong\R^\x_+ \x{\rm SL}_2(\C)$. 
Observe that $\frak{H}$ sits naturally inside $\frak{H}'$ via the $\G^+(\R)$-equivariant embedding
\begin{equation}\label{E:from H to H'}
x+iy\hookrightarrow\pMX{x}{-y}{y}{x}.
\end{equation}
By \cite[(1.4e)]{Hida1993}, there is a ${\rm SL}_2(\C)$-invariant measure $d\mu(\tau)$ on $\frak{H}'$  
given by
\begin{equation}\label{E:SL(2,C)-invariant measure}
d\mu(\tau)=\frac{|dz\wedge dy\wedge d\b{z}|}{2y^3},
\quad
\tau=\pMX{z}{-y}{y}{\b{z}}.
\end{equation}

\subsection{Base change to $\cK$}\label{SS:base change}
We review the base change lift of $f\in\cB_q$ from $\Q$ to $\cK$ in this subsection. A basic reference is 
\cite[Section 20]{JLbook2}. We assume $q$ and $\cD$ to be coprime in the rest of this subsection.
Let $\B\subset\G$ be the Borel subgroup, and $\N\subset\B$ be the unipotent subgroup.
We use $v$ to indicate an arbitrary place of 
$\Q$, and we write $p$ (resp. $\infty$) for a finite place (resp. an infinite place) of $\Q$.

\subsubsection{Adelic automorphic form}  
Let $f\in\cB_q$. For a prime $p\neq q$, we let 
$\stt{\alpha_p, \alpha^{-1}_p}$ be the Satake parameter of $f$ at $p$. Then 
\[
1-\lambda_f(p)X-p^{-1}X^2=(1-p^{-1/2}\alpha_pX)(1-p^{-1/2}\alpha_p^{-1}X).
\]
We fix $s_p\in\C$ so that $\alpha_p=p^{-s_p}$. 
The Maass form $f$ determines a cusp form $\mathbf{f}$ on $\G(\A)$ by the formula
\[
\mathbf{f}(h)=f(h_\infty\cdot\sqrt{-1})
\]
for $h=\gamma h_\infty k$ with $\gamma\in\G(\Q)$, $h_\infty\in\G^+(\R)$ and $k\in K_0(q\wh{\Z})$. 
Observe that this is well-defined since $\G(\Q)\cap\G^+(\R)K_0(q\wh{\Z})=\Gamma_0(q)$.
Let $\pi=\ot'_v\pi_v$ be the unitary irreducible cuspidal automorphic representation of 
${\rm PGL}_2(\A)$ generated by $\mathbf{f}$. Then we have
\[
\pi_\infty
\cong
\pi(|\cdot|_\R^{it_f}{\rm sgn}^{\delta_f}, |\cdot|_\R^{-it_f}{\rm sgn}^{\delta_f});
\quad
\pi_p
\cong
\pi(|\cdot|_{\Q_p}^{s_p}, |\cdot|_{\Q_p}^{-s_p});
\quad
\pi_q\cong {\rm St}_{\Q_q}\ot\chi_q,
\]
where $\chi_q$ is the unramified quadratic character of $\Q_q^{\x}$ with 
\begin{equation}\label{E:Atkin-Lehner eigenvalue for special}
\chi_q(q)=-w_f.
\end{equation}
Notice that $\bff\in\pi$ is the unique (up to constants) element such that
\[
\bff(hk_\infty k)=\bff(h)
\]
for $h\in\G(\A)$, $k_\infty\in{\rm SO}(2)$ and $k\in K_0(q\h{\Z})$.

Let $\cK_v=\cK\ot_\Q\Q_v$ for each $v$ and $\pi_\cK=\ot'_v\pi_{\cK,v}$ be the base 
change lift of $\pi$ to $\cK$. Then $\pi_\cK$ is a unitary irreducible cuspidal automorphic 
representation of ${\rm PGL}_2(\A_\cK)$ while $\pi_{\cK,v}$ is a unitary irreducible admissible
generic representation of ${\rm P}\G(\cK_v)$ for each $v$. For $p\neq q$, we have
\[
\pi_{\cK,p}\cong{\rm Ind}_{\B(\cK_p)}^{\G(\cK_p)}(|\cdot|_{\cK_p}^{s_p}\bt|\cdot|_{\cK_p}^{-s_p}).
\]
For  $v\in\stt{\infty,q}$, we have the followings.
If $\cK_v=\Q_v\oplus\Q_v$, then $\pi_{\cK,v}=\pi_v\bt\pi_v$. On the other hand, if $\cK_v$ is a field, then
\[
\pi_{\cK_,\infty}\cong\pi(|\cdot|^{it_f}_\C,|\cdot|^{-it_f}_\C);
\quad
\pi_{\cK,q}\cong{\rm St}_{\cK_q}.
\] 
By the theory of newform (\cite{Casselman1973}) and the $K_\infty$-type of $\pi_{\cK,\infty}$, 
there is a unique (up to constants) element $0\neq \bff_\cK\in\pi_\cK$ such that
\begin{equation}\label{E:characterize f_K}
\bff_\cK(hk_\infty k)=\bff_\cK(h)
\end{equation}
for $h\in\G(\A_\cK)$, $k_\infty\in K_\infty$ and $k\in K_0(q\h{\cO})$.
Here $K_\infty={\rm SO}(2)\x{\rm SO}(2)$ when $\cD>0$, and $K_\infty={\rm SU}(2)$ when $\cD<0$.

\subsubsection{Classical automorphic form}\label{SSS:classical auto. form}
Put
\[
\frak{H}_\cD
=
\begin{cases}
\frak{H}\x\frak{H}\quad&\text{if $\cD>0$},\\
\frak{H}'\quad&\text{if $\cD<0$}.
\end{cases}
\]
Fix an embedding $\cK\hookto\C$ throughout and let $\sigma$ be the generator of ${\rm Gal}(\cK/\Q)$.
Note that $\sigma(z)=\b{z}$ when $\cD<0$. Let ${\rm SL}_2(\cO)$ acts on $\frak{H}_\cD$ as follows. 
Suppose that $\cD<0$. Then ${\rm SL}_2(\cO)\subset{\rm SL}_2(\C)$ and the action is given by 
\eqref{E:action on three space}.
Suppose that $\cD>0$. We embed ${\rm SL}_2(\cO)$ into ${\rm SL}_2(\R)\x{\rm SL}_2(\R)$ via the map
\[
\pMX abcd\hookrightarrow\left(\pMX abcd, \pMX{\sigma(a)}{\sigma(b)}{\sigma(c)}{\sigma(d)}\right).
\]
The action of ${\rm SL}_2(\cO)$ on $\frak{H}_\cD$ is then induced from this embedding and 
the component-wise action of ${\rm SL}_2(\R)\x{\rm SL}_2(\R)$ on $\frak{H}\x\frak{H}$.
We associate $\bff_\cK$ with a unique
\footnote{In general, there are $h_\cK$ classical automorphic forms on $\frak{H}_\cD$ associated
to $\bff_\cK$, where $h_\cK$ is the (narrow) class number of $\cK$.}
classical automorphic form $f_\cK$ on the symmetric domain $\frak{H}_\cD$ as follows.  
Let $\tau\in\frak{H}_\cD$.  Choose any $h\in\G^+(\R)\x\G^+(\R)$
so that $h\cdot(\sqrt{-1},\sqrt{-1})=\tau$ when $\cD>0$, and $h\in{\rm SL}_2(\C)$ such that 
$h\cdot \pMX{0}{-1}{1}{0}=\tau$ when $\cD<0$.
Then we define
\[
f_\cK(\tau)=\bff_\cK(h).
\]
It follows immediately from \eqref{E:characterize f_K}
that this is well-defined and $f_\cK$ is 
$\Gamma_0(q\cO)$-invariant on the left.
We denote by $f_\cK|_\frak{H}\in L^2(X_0(q))$ the pull-back of $f_\cK$ to $\frak{H}$ 
via the diagonal embedding or the embedding in \eqref{E:from H to H'} depending on the sign of $\cD$. 
The following lemma describes the eigenvalue of $f_\cK|_{\frak{H}}$ under the Atkin-Lehner involution. 
For this we first recall a result in \cite[Proposition 3.1.2]{Schmidt2002}. Let $F$ be a non-archimedean local field, 
$\chi$ be a unramified quadratic character of $F^{\x}$ and $\pi={\rm St}_F\ot\chi$ be a special representation of $\G(F)$.
Then we have
\[
\pi\left(\pMX{0}{-1}{\varpi_F}{0}\right)v=-\chi(\varpi_F)v.
\]   
for every non-zero newform $v\in\pi$.

\begin{lm}\label{L:Atkin-Lehnner for base change}
Let $a=0$ (resp. $a=1$) if $q$ is split (resp. inert) in $\cK$. Then we have
\[
f_\cK|_\frak{H}\left(\sigma_0\cdot\tau\right)=(-1)^a f_\cK|_\frak{H}(\tau).
\]
Here $\sigma_0$ is the matrix in \eqref{E:Atkin-Lehnner}. 
\end{lm}

\begin{proof}
Fix an isomorphism $\pi_\cK\cong\ot'_v\pi_{\cK,v}$. When $v=q$, we have 
\[
\pi_{\cK,q}\cong
\begin{cases}
({\rm St}_{\Q_q}\ot\chi_q)\bt({\rm St}_{\Q_q}\ot\chi_q)\quad&\text{if $q$ is split in $\cK$},\\
{\rm St}_{\cK_q}\quad&\text{if $q$ is inert in $\cK$}.
\end{cases}
\]
Here $\chi_q$ is the unramified quadratic character of $\Q_q^{\x}$ with $\chi_q(q)=-w_f$.
Under the fixed isomorphism, $\bff_\cK$ corresponds to a pure tensor $\ot'_v\bff_{\cK,v}$ with 
$\bff_{\cK,p}\in\pi_{\cK,p}$ a newform for each finite place $p$. 
Let $\alpha=(\alpha_v)_v\in\G(\A_\cK)$ be the element such that 
$\alpha_v\in\G(\cK_v)$ is the identity for every $v\neq q$, and $\alpha_q\in\G(\cK_q)$ is given by
\[
\alpha_q=
\begin{cases}
(\sigma_0,\sigma_0)\quad&\text{if $q$ is split in $\cK$},\\
\sigma_0\quad&\text{if $q$ is inert in $\cK$}.
\end{cases}
\]
By the remark in front of the lemma, we see that $\pi_\cK(\alpha)\bff_\cK$ corresponds to the element 
$\ot_v'\pi_{\cK,v}(\alpha_v)\bff_{\cK,v}=(-1)^a\ot_v'\bff_{\cK,v}$.
Therefore we find that $\pi_\cK(\alpha)\bff_\cK=(-1)^a\bff_\cK$. 
Taking into account that the central character of $\pi_\cK$ is trivial and $\sigma_0^{-1}=-qI_2\cdot\sigma_0$, the lemma now follows from these together with the dictionary between adelic and classical automorphic forms.
\end{proof}

\subsection{$L^2$-Norm and the adjoint $L$-function}
Let $f\in\cB_q$ and let $f_\cK$ be its base change lift to $\cK$ defined in \S\ref{SS:base change}.
Let $\bff_\cK$ be the adelic automorphic form associated to $f_\cK$ and
let $\pi_\cK$ be the unitary irreducible cuspidal automorphic representation of 
${\rm P}\G(\A_\cK)$ generated by $\bff_\cK$.
We assume $f_\cK$ to be $L^2$-normalized in the sense that
\begin{equation}\label{E:L^2 normalized}
\|f_\cK\|_2^2
:=
\int_{\Gamma_0(q\cO)\backslash\frak{H}_\cD}
|f_\cK(\tau)|^2d\mu(\tau)=1,
\end{equation}
where $d\mu(\tau)$ is the invariant measure given by \eqref{E:SL(2,C)-invariant measure} when
$\cD<0$, and is given by $d\mu(\tau)=d\mu(\tau_1)d\mu(\tau_2)$
for $\tau=(\tau_1,\tau_2)\in\frak{H}_\cD$ when $\cD>0$. 
Certainly this normalization is only up to norm $1$ elements, but it is enough for our purpose.
The automorphic form $f_\cK$ has the Fourier expansion
\begin{equation}\label{E:Fourier expansion for D>0}
f_\cK(\tau_1,\tau_2)
=
\rho_{f_\cK}
\sum_{0\neq \alpha\in\cO}
\lambda_{f_\cK}(\alpha)(y_1y_2)^{\frac{1}{2}}
K_{it_f}(2\pi|\alpha|\cD^{-1/2}y_1)K_{it_f}(2\pi|\sigma(\alpha)|\cD^{-1/2}y_2)
e^{2\pi i\cD^{-1/2}(\alpha x_1-\sigma(\alpha)x_2)}
\end{equation}
if $\cD>0$, and
\begin{equation}\label{E:Fourier expansion for D<0}
f_\cK(\tau)
=
\rho_{f_\cK}\sum_{0\neq \alpha\in\cO}
\lambda_{f_\cK}(\alpha)yK_{2it_f}(4\pi|\alpha|^{1/2}_\C|\cD|^{-1/2} y)
e^{2\pi i\cD^{-1/2}(\alpha z-\overline{\alpha z})}
\end{equation}
if $\cD<0$. Here $\tau_1=x_1+iy_1$, $\tau_2=x_2+iy_2$ and $\tau=\pMX{z}{-y}{y}{\b{z}}$. 
We assume that $\lambda_{f_\cK}(1)=1$ in both cases. 
Let ${\rm Ad}^2 f_\cK$ be the adjoint square lift of $f_\cK$ to a cusp form on 
${\rm GL}_3$ as in \cite{GelbartJacquet1978} and 
$L(s,{\rm Ad}^2f_\cK)$ be the associated $L$-function. It has the factorization
\begin{equation}\label{E:factorization adjoint L-fcn for base change}
L(s,{\rm Ad}^2f_\cK)
=
L(s,{\rm Ad}^2 f)L(s,{\rm Ad}^2 f\x\tau_\cK).
\end{equation}
Define the complete $L$-function
\begin{align}\label{E:complete L-fcn for adjoint}
\begin{split}
\Lambda(s,{\rm Ad}^2 f_\cK)
&=
L_\infty(s,{\rm Ad}^2 f_\cK)L(s,{\rm Ad}^2 f_\cK),\\
L_\infty(s,{\rm Ad}^2 f_\cK)
&=
\prod_{\epsilon_1\in\stt{0,\pm},\,\epsilon_2\in\stt{0,1}}
\zeta_\R(s+2\epsilon_1 it_f+\epsilon_2\delta_\cD).
\end{split}
\end{align}
\begin{lm}\label{L:norm of f_K}
We have 
\[
|\rho_{f_\cK}|^{-2}
=
\Lambda(1,{\rm Ad}^2 f_\cK)
\cdot
(q\cD)^2
\cdot
\begin{cases}
2\quad&\text{if $\cD>0$},\\
2^{-4}\pi\quad&\text{if $\cD<0$}.
\end{cases}
\]
\end{lm}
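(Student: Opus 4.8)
The plan is to obtain the value of $|\rho_{f_\cK}|^2$ by computing the squared norm $\|f_\cK\|_2^2$, which equals $1$ by \eqref{E:L^2 normalized}, in a second way via the Rankin--Selberg unfolding method. I would form the integral $I(s)=\int_{\Gamma_0(q\cO)\backslash\frak{H}_\cD}|f_\cK(\tau)|^2E(\tau,s)\,d\mu(\tau)$, where $E(\tau,s)$ is the real-analytic Eisenstein series attached to the cusp $\infty$ of $\Gamma_0(q\cO)$---a Hilbert modular Eisenstein series on $\frak{H}\times\frak{H}$ when $\cD>0$ and a Bianchi Eisenstein series on $\frak{H}'$ when $\cD<0$---normalized so that the residue at the edge of absolute convergence is the (known) constant attached to $\vol(\Gamma_0(q\cO)\backslash\frak{H}_\cD)$. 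Then $\|f_\cK\|_2^2=1$ forces $\Res I(s)$ to be that constant, and I would evaluate the volume as $\vol({\rm SL}_2(\cO)\backslash\frak{H}_\cD)$ times the index $[{\rm SL}_2(\cO):\Gamma_0(q\cO)]=N(q\cO)\prod_{\frakp\mid q}(1+N(\frakp)^{-1})=q^2\prod_{\frakp\mid q}(1+N(\frakp)^{-1})$ (using $q\nmid\cD$); the factor $\vol({\rm SL}_2(\cO)\backslash\frak{H}_\cD)$ is given by the Humbert/Siegel volume formula and carries $|\cD|^{3/2}\zeta_\cK(2)$ together with a power of $2$ and of $\pi$ depending on $\mathrm{sign}(\cD)$.

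On the other side, unfolding $I(s)$ along $\Gamma_\infty\backslash\Gamma_0(q\cO)$ and inserting the Fourier expansions \eqref{E:Fourier expansion for D>0}, \eqref{E:Fourier expansion for D<0}, the integration in the horocyclic ($x$-)variables annihilates the off-diagonal terms and leaves $I(s)=|\rho_{f_\cK}|^2\cdot\Theta(s)\cdot D(s)\cdot J_\infty(s)$, where: $D(s)=\sum_{\frakn}|\lambda_{f_\cK}(\frakn)|^2N(\frakn)^{-s}$ runs over nonzero integral ideals of $\cO$ (with $s$ normalized so that its pole is at $s=1$); $J_\infty(s)$ is a product of one (for $\cD<0$) or two (for $\cD>0$) Mellin transforms $\int_0^\infty K_\nu(u)^2u^{s}\,\frac{du}{u}$, which by the classical formula for $\int_0^\infty K_\mu(u)K_\nu(u)u^{s-1}\,du$ produces exactly the archimedean factors $L_\infty(s,{\rm Ad}^2f_\cK)$ of \eqref{E:complete L-fcn for adjoint} up to powers of $2$ and $\pi$; and $\Theta(s)$ collects the Euclidean covolume of $\cO$ (resp.\ the contribution of the cusps when $\cD>0$), the number of units that converts the sum over $\alpha$ into a sum over ideals, and a power of $\cD^{-1/2}$ coming from the different of $\cK$ in the Bessel arguments. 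Using that $\bff_\cK$ is a newform and the local computation at the prime(s) above $q$---where $\pi_{\cK,q}$ is (a twist of) the Steinberg representation, so the local Euler factor of $D$ is $(1-N(\frakp)^{-s-1})^{-1}$---one identifies $D(s)=\prod_{\frakp\mid q}(1+N(\frakp)^{-s})^{-1}\cdot\frac{\zeta_\cK(s)\,L(s,{\rm Ad}^2f_\cK)}{\zeta_\cK(2s)}$, the factorization \eqref{E:factorization adjoint L-fcn for base change} organizing the $L$-function side.

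Equating the two residue computations gives an identity of the form $\vol(\Gamma_0(q\cO)\backslash\frak{H}_\cD)^{-1}\cdot(\text{const})=|\rho_{f_\cK}|^2\cdot\Theta(1)\cdot J_\infty(1)\cdot\prod_{\frakp\mid q}(1+N(\frakp)^{-1})^{-1}\cdot\kappa_\cK\cdot\frac{L(1,{\rm Ad}^2f_\cK)}{\zeta_\cK(2)}$ with $\kappa_\cK=\Res_{s=1}\zeta_\cK(s)$, and one solves for $|\rho_{f_\cK}|^{-2}$. Three clean cancellations happen: the $\prod_{\frakp\mid q}(1+N(\frakp)^{-1})$ from the index against the Steinberg correction, the $\zeta_\cK(2)$ inside $\vol({\rm SL}_2(\cO)\backslash\frak{H}_\cD)$ against the $\zeta_\cK(2)$ in the denominator, and the roots-of-unity/regulator/discriminant content of $\kappa_\cK$ (with $h_\cK=1$) against the unit count and part of the $|\cD|$-power in $\Theta(1)$; what remains is $(q\cD)^2$ times a numerical constant, while $J_\infty(1)$ merges $L_\infty(1,{\rm Ad}^2f_\cK)$ with $L(1,{\rm Ad}^2f_\cK)$ into $\Lambda(1,{\rm Ad}^2f_\cK)$, giving $2^{-3}$ when $\cD>0$ and $2^4\pi^{-1}$ when $\cD<0$. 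I expect the \emph{main obstacle} to be precisely this bookkeeping of constants rather than any conceptual step: pinning down the exact residue constant of the $\frak{H}'$ (resp.\ $\frak{H}\times\frak{H}$) Eisenstein series, the exact Humbert/Siegel volume including its delicate $2$-power (which differs between the real and imaginary quadratic cases and is sensitive to conventions about cusps and units), the reconciliation of the measure \eqref{E:SL(2,C)-invariant measure} with the self-dual measure implicit in the $\cD^{-1/2}$-twisted additive character, and the careful treatment of the infinite unit group in the expansion \eqref{E:Fourier expansion for D>0}; by contrast the Euler-product identification of $D(s)$ and the evaluation of the $K$-Bessel Mellin transforms are routine. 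An alternative I would keep in reserve is to run the same argument adelically---writing $\|f_\cK\|_2^2$ as an adelic Petersson product, unfolding the adelic Eisenstein series into $\prod_vZ_v(s,W_v,W_v)$ with $W_v(1)=1$, evaluating the local Rankin--Selberg zeta integrals (unramified, Steinberg at $q$, and the archimedean $K$-Bessel one), and reading off $|\rho_{f_\cK}|^{-2}$ from $L(1,{\rm Ad}^2f_\cK)$, the local volumes of $K_0(q\widehat\cO)$, and the dictionary relating $\rho_{f_\cK}$ to the archimedean Whittaker function.
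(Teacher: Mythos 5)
Your main line of attack — classical Rankin--Selberg unfolding against the Hilbert (resp.\ Bianchi) Eisenstein series for $\Gamma_0(q\cO)$, evaluation of the residue at $s=1$ against the Humbert/Siegel volume, and hand-tracking of the unit, discriminant, and $2$-power constants — is a genuinely different route from the paper, which instead invokes Waldspurger's adelic Petersson norm formula \cite[Proposition 6]{Wald1985} directly. The paper realizes $\bff_\cK^0$ by a product $W=\prod_v W_v$ of explicit local Whittaker functions (the local newforms at finite places, a $K_\infty$-invariant Bessel function at infinity), reads off $f_\cK=\rho_{f_\cK}|\cD|^{-1/2}f_\cK^0$ by matching Fourier expansions, and then Waldspurger's formula gives the Tamagawa-measure Petersson norm as $2\xi_\cK(2)^{-1}L(1,\pi_\cK,{\rm Ad})\prod_v\cH_v^0(W_v,W_v)$; the only remaining work is to compute the finitely many nontrivial local Whittaker integrals $\cH_v^0$ (archimedean Bessel and Steinberg at $q$) and to compare the Tamagawa measure on ${\rm PGL}_2(\A_\cK)$ with the classical hyperbolic measure on $\Gamma_0(q\cO)\backslash\frak{H}_\cD$. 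This is precisely the ``alternative kept in reserve'' at the end of your proposal, and it is substantially the cleaner path: the Humbert/Siegel volume constant, the residue of the Eisenstein series, the index $[{\rm SL}_2(\cO):\Gamma_0(q\cO)]$, the unit count, and the $\zeta_\cK(2)$ cancellation that you would have to reconcile by hand are all absorbed into Waldspurger's normalization plus a single Tamagawa-vs-classical measure comparison. Your classical route is in principle correct and you flag every delicate spot (the different of $\cK$ hiding in the $\cD^{-1/2}$ in the Bessel arguments, the unit group in \eqref{E:Fourier expansion for D>0}, the $2$-power conventions), but be warned that the constant bookkeeping you yourself identify as the main obstacle is exactly where such derivations go wrong; switching to the adelic version, as the paper does, trades several independent volume and residue computations for one measure comparison and a handful of transparent local integrals.
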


\begin{proof}
We use \cite[Proposition 6]{Wald1985} to compute the constant. Let $\psi_\Q=\prod_v\psi_v$ be the 
non-trivial additive character of $\Q\backslash\A$ such that $\psi_\infty(x)=e^{2\pi ix}$ for 
$x\in\R$. When $v=p<\infty$, let $\psi_p$ be the character on $\Q_p$ which is trivial on $\Z_p$ and  
$\psi_p(x)=e^{-2\pi i x}$ for $x\in\Z[q^{-1}]$. Put 
$\psi_\cK:=\psi_\Q\circ{\rm tr}_{\cK/\Q}=\prod_v \psi_{\cK,v}$. 
For each place $v$ of $\Q$, let $\cW(\pi_{\cK,v},\psi_{\cK,v})$ be the Whittaker model of 
$\pi_{\cK,v}$ with respect to $\psi_{\cK,v}$.
Let $W_p\in\cW(\pi_{\cK,p},\psi_{\cK,p})$ be the newform as in \cite{Schmidt2002} with
\[
W_p\left(\pMX{\cD^{-1/2}}{0}{0}{1}\right)=1.
\]
For the infinite place, we let $W_\infty\in\cW(\pi_{\cK,\infty},\psi_{\cK,\infty})$ be the right
$K_\infty$-invariant element (\S\ref{SSS:Whittaker functions}) given by
\[
W_\infty\left(\pMX y001\right)
=
\begin{cases}
{\rm sgn}(y_1y_2)^{\delta_f}|y_1y_2|^{1/2}
K_{it_f}(2\pi|y_1|)K_{it_f}(2\pi|y_2|)\quad&\text{if $\cD>0$},\\
|y|_\C^{1/2}K_{2it_f}(4\pi|y|_\C^{1/2})\quad&\text{if $\cD<0$}.
\end{cases}
\]
Here $y=(y_1,y_2)\in\cK^{\x}_\infty=\R^{\x}\x\R^{\x}$ if $\cD>0$, and $y\in\cK^{\x}_\infty=\C^{\x}$
if $\cD<0$. Put $W=\prod_v W_v$. The Fourier expansion
\begin{equation}\label{E:expansion}
\bff^0_\cK(h)=\sum_{\alpha\in\cK^{\x}}W\left(\pMX{\alpha}{0}{0}{1} h\right)
\quad (h\in\G(\A_\cK))
\end{equation}
defines a non-zero element in $\pi_\cK$. Moreover, by the choices of $W_v$, the cusp 
form $\bff^0_\cK$ satisfies \eqref{E:characterize f_K}, and hence is parallel to $\bff_\cK$.  
Let $f^0_\cK$ be the classical 
automorphic form associated to $\bff^0_\cK$ as in \S\ref{SSS:classical auto. form}. Then we have
\begin{equation}\label{E:for norm 1}
f^0_\cK=\rho_{f_\cK}^{-1}\cdot |\cD|^{-1/2}\cdot f_\cK,
\end{equation}   
by comparing the expansion \eqref{E:expansion} with \eqref{E:Fourier expansion for D>0} and 
\eqref{E:Fourier expansion for D<0}.
By \cite[Proposition 6]{Wald1985}, we have
\[
\int_{\A^{\x}_\cK\G(\cK)\backslash\G(\A_\cK)}
|\bff_\cK^0(h)|^2dh^{{\rm Tam}}
=
2\xi_\cK(2)^{-1}\Lambda(1,\pi_\cK,{\rm Ad})\prod_v\cH^0_v(W_v,W_v),
\]
where $dh^{{\rm Tam}}$ is the Tamagawa measure on ${\rm PGL}_2(\A_\cK)$, and 
\[
\cH^0_v(W_v,W_v)
=
\frac{\zeta_{\cK_v}(2)}{\zeta_{\cK_v}(1)L(1,\pi_{\cK_v},{\rm Ad})}
\int_{\cK_v^{\x}}\left|W_v\left(\pMX {y_v}{0}{0}{1}\right)\right|^2d^{\x}y_v.
\]
The measure $d^{\x}y_v$ on $\cK^{\x}_v$ is given by 
$d^{\x}y_v=\zeta_{\cK_v}(1)|y_v|^{-1}_{\cK_v}dy_v$ where $dy_v$ is the self-dual measure on 
$\cK_v$ with respect to $\psi_{\cK,v}$. By \lmref{L:integration formula for Bessel function} $(3)$
and a direct computation, we find that 
\begin{equation}\label{E:for norm 2}
\prod_v \cH^0_v(W_v,W_v)
=
2^{-2\delta_\cD}\cdot|\cD|^{-1/2}\cdot\frac{\zeta_{\cK_q}(2)}{\zeta_{\cK_q}(1)},
\end{equation}
where $v$ runs over all places of $\Q$ and $\zeta_{\cK_q}(s)=\zeta_{\Q_q}(s)^2$ if $q$ is split in $\cK$.
On the other hand, by the choices of the measures $d\mu(\tau)$, we have
\begin{equation}\label{E:for norm 3}
\int_{\A^{\x}_\cK\G(\cK)\backslash\G(\A_\cK)}
|\bff_\cK^0(h)|^2dh^{{\rm Tam}}
=
\xi_\cK(2)^{-1}q^{-2}|\cD|^{-3/2}(8\pi^{-1})^{\delta_\cD}\frac{\zeta_{\cK_q}(2)}{\zeta_{\cK_q}(1)}
\int_{\Gamma_0(q\cO)\backslash\frak{H}_\cD}
|f^0_\cK(\tau)|^2d\mu(\tau).
\end{equation}
If $\cD>0$, then \eqref{E:for norm 3} follows from \cite[Lemmas 6.1, 6.3]
{IchinoPrasanna}. If $\cD<0$, then one can follow their arguments to obtain \eqref{E:for norm 3}.
The lemma now follows from \eqref{E:for norm 1}, \eqref{E:for norm 2} and \eqref{E:for norm 3}.
\end{proof}

\section{Proof of \thmref{T:main result}}
\label{S:proof of main theorem}
We prove \thmref{T:main result} in this section. The main important ingredients of the proof are the
special value formulae for the twisted triple product $L$-functions and the Kuznetsov formula 
(\cite[page 409]{IwaniecKowalski2004}). We only sketch the proof as the substantial parts are 
already contained in \cite{Blomer2013}. Indeed, we modify the proof in \cite{Blomer2013} so that 
it also works in our case. We mainly focus on the differences between these two cases 
and give a proof whenever the differences occur.
Suppose that $q$ is coprime to $\cD$ in this section.
Given $f\in\cB_q$, denote by $f_\cK$ its $L^2$-normalized \eqref{E:L^2 normalized} base change lift to 
$\cK$ defined in \S\ref{SS:base change}, and let $\pi_\cK$ be the unitary irreducible cuspidal 
automorphic representation of ${\rm P}\G(\A_\cK)$ generated 
by $f_\cK$. The descriptions of the local components of $\pi_\cK$ can be found in
\S\ref{SS:base change}.
 
\subsection{Triple product $L$-functions}
By a result of Krishnamurthy (\cite{Krishnamurthy2003}), 
the Asai transfer ${\rm As}\,\pi_\cK$ of $\pi_\cK$ is an isobaric automorphic representation of 
${\rm GL}_4(\A)$. We write $L(s,{\rm As} f_\cK)$ for the associated $L$-function. 
Let $g\in\cB_q\cup\cB_1$.
The twisted triple product $L$-function $L(s,{\rm As}f_\cK\x g)$ associated
to $f_\cK$ and $g$ is given by
\[
L(s,{\rm As}f_\cK\x g)
=
L(s,{\rm Ad}^2f\x g)L(s,g\x\tau_\cK).
\]
Here $L(s,{\rm Ad}^2 f_\cK\x g)$ is the Rankin-Selberg L-function for ${\rm GL}_3\x\G$ (\cite{JPSS1983}). 
Observe that the $\G$ $L$-function in our case is twisted by the quadratic character $\tau_\cK$ while in 
\cite[Section 3]{Blomer2013} it is not.
\subsubsection{Complete $L$-functions and the functional equations}
Let $\epsilon\in\stt{0,1}$ such that $\epsilon\equiv\delta_g+\delta_\cD\pmod 2$.  Then one has
\begin{align}\label{E:L-fcn for Maass}
\begin{split}
L_\infty(s,g\x\tau_\cK)
&=
\zeta_\R(s+it_g+\epsilon)\zeta_\R(s-it_g+\epsilon),\\
\Lambda(s,g\x\tau_\cK)
&:=
L_\infty(s,g\x\tau_\cK)L(s,g\x\tau_\cK)
=
(-1)^{\delta_g}w_g(q^\ell\cD^2)^{1/2-s}
\Lambda(1-s,g\x\tau_\cK)
\end{split}
\end{align}
where $\ell=1$ or $0$ depends on $g\in\cB_q$ or $g\in\cB_1$, respectively. On the other hand,
\begin{align}\label{E:L-fcn for sym sq tensor Maass}
\begin{split}
L_\infty(s,{\rm Ad}^2f\x g)
&=
\prod_{\epsilon_1\in\stt{0,\pm},\,\epsilon_2\in\stt{\pm}}
\zeta_\R(s+\epsilon_1 2it_f+\epsilon_2 it_g),\\
\Lambda(s,{\rm Ad}^2 f\x g)
&:=
L_\infty(s,{\rm Ad}^2 f\x g)L(s,{\rm Ad}^2 f\x g)
=
(-1)^{\delta_g}(q^4)^{1/2-s}\Lambda(1-s,{\rm Ad}^2 f\x g).
\end{split}
\end{align}
Notice that $L(s,{\rm Ad}^2 f\x g)$ and hence $L(s,{\rm As}f_\cK \x g)$ vanishes at $s=1/2$ if $g$
is odd. The series expansion of prime-to-$q$ part $L$-function $L^{(q)}(s,{\rm Ad}^2 f\x g)$ is 
given by \cite[(3.1)]{Blomer2013}, while the 
series expansion of $L(s,{\rm Ad}^2 f\x g)$ is given by \cite[(3.2)]{Blomer2013}.
\subsubsection{Approximate functional equations}
Following Blomer, let $G(u)$ and 
$G_2(u,t_1,t_2)$ be defined as in \cite[page 1833]{Blomer2013}. On the other hand, we replace
$G_1(u,t)$ in the same page by
\[
G_1(u,t)
=
\prod_{\epsilon_1,\epsilon_2\in\stt{\pm}}\prod_{j=0}^{A}
\left(\frac{1/2+\epsilon_1 u+\epsilon_2 it +\delta_\cD}{2}+j\right).
\]
Here $A\geq 10$ is an integer.
Properties of these functions can be found in \cite[page 1833]{Blomer2013}. In particular, we have
\begin{equation}\label{E:lower bound for G_1, G_2}
G_1(0,t),\,G_2(0,t_1,t_2)\gg 1\quad\text{for}\quad t,t_1,t_2\in\cT.
\end{equation}
For this lower bound, we need any non-trivial bound towards the Ramanujan conjecture for the infinite
place of the ${\rm GL}_6$ cusp form ${\rm Ad}^2 f\x g$ as in \cite{Kim2003}, \cite{IwaniecLuoSarnak2000}.  
Let $V_1(y;t)$ and $V_2(y;t_1,t_2)$ be the weight functions defined in 
\cite[pages 1833-1834]{Blomer2013}. 
Then \cite[Lemma 1]{Blomer2013} holds for $V_1$ and $V_2$ except the equation (3.5) in that lemma;
however, we do not need this property (see \S\ref{SSS:contribution of oldforms and Eisenstein series}). 

\begin{lm}\label{L:weighted sum of L-fcn}
Let $g\in\cB_q$. We have
\[
G_1(0,t_g)L(1/2,g\x\tau_\cK)
=
(1-\lambda_g(q)q^{1/2})
\sum_{n=1}^\infty
\frac{\lambda_g(n)\tau_\cK(n)}{n^{1/2}}V_1\left(\frac{n}{q^{1/2}|\cD|};t_g\right)
\]
and
\[
G_2(0,t_g,t_f)L(1/2,{\rm Ad}^2 f\x g)
=
2\sum_{m=0}^\infty
\frac{\lambda_{{\rm Ad}^2 f\x g}(m)}{m^{1/2}}V_2\left(\frac{m}{q^2};t_g,t_f\right).
\]
\end{lm}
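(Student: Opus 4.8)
The plan is to derive both identities from the standard approximate functional equation machinery, applied to the completed $L$-functions whose gamma factors and functional equations were recorded in \eqref{E:L-fcn for Maass} and \eqref{E:L-fcn for sym sq tensor Maass}. First I would treat $L(1/2,g\times\tau_\cK)$. Starting from the Dirichlet series $L(s,g\times\tau_\cK)=\sum_n \lambda_g(n)\tau_\cK(n)n^{-s}$ (valid since $g\in\cB_q$ so the local factor at $q$ is accounted for by the Hecke relation $w_g=-\lambda_g(q)q^{1/2}$, and $\tau_\cK$ is trivial at $q$ as $q$ and $\cD$ are coprime), I would form the integral $\frac{1}{2\pi i}\int_{(2)} \Lambda(1/2+u,g\times\tau_\cK)\,G_1(u,t_g)\,\frac{du}{u}$, shift the contour to $\Re u=-2$, pick up the residue at $u=0$, and use the functional equation from \eqref{E:L-fcn for Maass}. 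Because $g$ is even, $\delta_g=0$, so the root number is $w_g$; the symmetry of $G_1(u,t)$ in $u\mapsto -u$ (built into its definition as a product over $\epsilon_1\in\{\pm\}$) lets one fold the two integrals together. Dividing through by the archimedean factor $L_\infty(1/2,g\times\tau_\cK)$ and unfolding the Dirichlet series against the definition of $V_1(y;t_g)$ — which is precisely the inverse Mellin transform of $G_1(u,t)L_\infty(1/2+u,g\times\tau_\cK)/L_\infty(1/2,g\times\tau_\cK)$ times the conductor factor $(q^{1/2}|\cD|)^u$ — produces the single sum $\sum_n \lambda_g(n)\tau_\cK(n)n^{-1/2}V_1(n/(q^{1/2}|\cD|);t_g)$, with the scalar $(1-\lambda_g(q)q^{1/2})$ emerging from the Euler factor at $q$ that we must reinsert to pass from $L^{(q)}$ to the full $L$-function (equivalently, from the relation $\lambda_g(q)=-w_gq^{-1/2}$ combined with the local factor). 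The conductor here is $q^c\cD^2 = q\cD^2$ since $g\in\cB_q$ gives $c=1$; its square root $q^{1/2}|\cD|$ is the scaling appearing in the argument of $V_1$.

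For the second identity I would run the identical contour argument with $\Lambda(s,{\rm Ad}^2f\times g)$ in place of $\Lambda(s,g\times\tau_\cK)$, using the weight $G_2(u,t_g,t_f)$ and the functional equation in \eqref{E:L-fcn for sym sq tensor Maass}, whose conductor is $q^4$ and whose root number is $(-1)^{\delta_g}=1$ since $g$ is even. The Dirichlet series is $L(s,{\rm Ad}^2 f\times g)=\sum_m \lambda_{{\rm Ad}^2 f\times g}(m)m^{-s}$ with the coefficients as in \cite[(3.2)]{Blomer2013}; note that this is the full $L$-function, not the prime-to-$q$ part, so no correcting Euler factor at $q$ is needed and instead one simply gets the factor $2$ from summing the two folded halves (the analogue of the factor $(1-\lambda_g(q)q^{1/2})$ being absent, the symmetry merely doubles). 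The function $V_2(y;t_1,t_2)$ is defined so that it is exactly the inverse Mellin transform of $G_2(u,t_1,t_2)L_\infty(1/2+u,{\rm Ad}^2 f\times g)/L_\infty(1/2,{\rm Ad}^2f\times g)$ against $(q^2)^u$, so unfolding gives $2\sum_m \lambda_{{\rm Ad}^2f\times g}(m)m^{-1/2}V_2(m/q^2;t_g,t_f)$ as claimed; the scaling $q^2$ is the square root of the conductor $q^4$.

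The routine but essential points are: verifying that the contour shift is legitimate (rapid decay of the completed $L$-function in vertical strips, guaranteed by convexity and the exponential decay of $\Gamma$-factors, together with $G_1,G_2$ having at most polynomial growth on vertical lines and vanishing to high order at the poles of the shifted gamma factors so that no extra residues are collected — this is why $A\geq 10$ is imposed), and checking that after dividing by $L_\infty(1/2,\cdot)$ the resulting Mellin kernel matches the definition of $V_1$, $V_2$ on \cite[page 1833]{Blomer2013} on the nose. I would also record that $G_1(0,t_g),G_2(0,t_g,t_f)\gg 1$ by \eqref{E:lower bound for G_1, G_2}, so the identities can be solved for $L(1/2,g\times\tau_\cK)$ and $L(1/2,{\rm Ad}^2f\times g)$ respectively when needed downstream.

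The main obstacle I expect is purely bookkeeping rather than conceptual: matching the precise normalization of the conductor factors and the archimedean gamma quotients so that the kernel of the inverse Mellin transform is literally $V_1$ respectively $V_2$ as defined in \cite{Blomer2013}, and correctly tracking the Euler factor at $q$ (hence the appearance of $1-\lambda_g(q)q^{1/2}$ in the first formula but a bare $2$ in the second, reflecting that the first uses $L^{(q)}$-data assembled into the full $L$-function while the second is already the full $L$-function). Since this is exactly the standard derivation — \cite[page 98]{IwaniecKowalski2004}, \cite[Section 2]{Blomer2012} — and the functional equations and weight functions have been set up above to make it go through verbatim, I would present it concisely and refer to those sources for the analytic details.
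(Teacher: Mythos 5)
Your proposal is essentially the standard approximate-functional-equation derivation that the paper itself invokes: the paper gives no proof at all, merely citing \cite[page 98]{IwaniecKowalski2004} and \cite[Section 2]{Blomer2012}, and the route you describe — contour shift of $\frac{1}{2\pi i}\int_{(2)}\Lambda(1/2+u,\cdot)G(u,\cdot)\frac{du}{u}$, apply the functional equation, fold the two integrals using the even symmetry of $G_1,G_2$, and identify the kernel with $V_1,V_2$ via Mellin inversion — is exactly that standard argument. The structure and all the bookkeeping (conductors $q\cD^2$ and $q^4$, scalings $q^{1/2}|\cD|$ and $q^2$, the role of $A\geq 10$ in killing the poles of the shifted gamma factors so the shift collects only the residue at $u=0$, and the positivity \eqref{E:lower bound for G_1, G_2}) are right.

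One clarification worth making: the scalar $(1-\lambda_g(q)q^{1/2})$ is \emph{not} an Euler factor at $q$ being reinserted. The Dirichlet series $\sum_n\lambda_g(n)\tau_\cK(n)n^{-s}$ you start from is already the full $L$-function, and multiplying by a local Euler factor would produce another Dirichlet series, not a scalar. What actually happens is that after folding, the self-dual $L$-function yields the familiar factor $1+\epsilon$, where $\epsilon$ is the root number. From \eqref{E:L-fcn for Maass} with $\delta_g=0$, the root number is $w_g$, and the Atkin--Lehner relation $w_g=-\lambda_g(q)q^{1/2}$ (noted just after \eqref{E:Atkin-Lehnner}) gives $1+w_g = 1-\lambda_g(q)q^{1/2}$. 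Likewise in the second identity $1+\epsilon = 1+(-1)^{\delta_g}=2$. You do state the needed relation parenthetically, so the final formula comes out right, but the ``reinsert the Euler factor'' framing is a red herring and should be replaced by the $1+\epsilon$ explanation.
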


\begin{proof}
The proof is standard and can be found in \cite[page 98]{IwaniecKowalski2004}, 
\cite[Section 2]{Blomer2012}.
\end{proof}

\subsection{Parseval's identity}
We now fix a $T>0$ and assume that $t_f\ll T$. 
The aim of this subsection and the next is to use the special value formulae 
and the Kuznetsov formula to transform the quantity 
of interest, $\sum_{t_f\ll T}\|f_\cK|_\frak{H}\|_2^2$, into the character sums, following the method in 
\cite[Section 5]{Blomer2013}. The spectrum of $L^2(X_0(q))$ consists of the constant function $1$, the Maass forms, 
and the Eisenstein series $E_\frak{a}(\tau,1/2+it)$.
The definition and the Fourier expansion of $E_\frak{a}(\tau,1/2+it)$ can be found in 
\cite[page 1830]{Blomer2013}, \cite[pages 1187-1188]{ConreyIwnaiec2000}.
The Parseval's identity in \cite[page 391]{IwaniecKowalski2004} gives
\begin{equation}\label{E:Parseval identity}
\|f_\cK|_\frak{H}\|_2^2
=
{\rm Vol}(X_0(q))^{-1}|\langle f_\cK|_\frak{H},1\rangle|^2
+
\sum_{g\in\cB}|\langle f_\cK|_\frak{H},g\rangle|^2
+
\sum_{\frak{a}}
\frac{1}{4\pi}\int_\R
|\langle f_\cK|_\frak{H}, E_\frak{a}(\cdot,1/2+it)\rangle|^2dt
\end{equation}
where ${\rm Vol}(X_0(q))=\frac{\pi}{3}(q+1)$ is the volume of $X_0(q)$. By a result of 
Filcker (\cite[Section 5]{Flicker1988}), we have
\begin{equation}\label{E:distinguishable}
\langle f_\cK|_\frak{H},1\rangle=0.
\end{equation}
Observe that $|\langle f_\cK|_\frak{H},g\rangle|^2=0$ when $g$ is odd. This is because  it relates to
the central value $L(1/2,{\rm As}f_\cK \x g)$ by \thmref{T: special value formula}, and we have
remarked that the $L$-function $L(s,{\rm As}f_\cK \x g)$ vanishes at $s=1/2$ when $g$ is odd in 
the previous subsection. 
The following proposition concerns the contributions of the oldforms and the Eisenstein series.

\begin{prop}\label{P:first estimate}
We have
\[
\|f_\cK|_\frak{H}\|_2^2
\ll_{\cD,T,\epsilon}
\sum_{g\in\cB_q,\,g\,\text{even}}|\langle f_\cK|_\frak{H},g\rangle|^2
+
q^{-1+\epsilon}.
\]
\end{prop}

In the rest of this subsection, we are going to prove the 
\propref{P:first estimate}. 

\subsubsection{Contribution of oldforms}\label{SSS:contribution of oldforms}
We show that 
$\sum_{g\in\cB_1\cup\cB_1^*}|\langle f_\cK|_{\frak{H}},g\rangle|^2\ll_{\cD,T,\epsilon} 
q^{-1+\epsilon}$.
One may either follow the argument in \cite[page 1837]{Blomer2013}, or one can argue as follows.
Let $g\in\cB_1$.
By the functoriality of the adjoint square lift (\cite{GelbartJacquet1978}) and the non-existence of
the Siegel zeros (\cite{Banks1997}), we have 
\begin{equation}\label{E:lower bound for adjoint}
L(1,{\rm Ad}^2 f_\cK)\gg_{\cD,T,\epsilon} q^{-\epsilon}
\quad\text{and}\quad
L(1,{\rm Ad}^2 g)\gg_\epsilon (1/4+t_g^2)^{-\epsilon}
\end{equation}
according to \cite[Proposition 2.2.2 (iii)]{Molteni1999} and \cite{HoffsteinLockhart1994}.
On the other hand, it's a deep result of \cite{KimShahidi2002} that 
${\rm Ad}^2 f\x g$ corresponds to an cusp forms on ${\rm GL}_6$ and hence we can exploit the convexity
bound (\cite[Theorem 5.41]{IwaniecKowalski2004}) to obtain
\[
L(1/2,{\rm As}f_\cK\x g)
=
L(1/2,{\rm Ad}^2 f\x g)L(1/2,g\x\tau_\cK)
\ll_{\cD,\epsilon}
q^{1+\epsilon}(4+2T+|t_g|)^{1+\epsilon}.
\]
As the implicit constants are independent of $g$, we find by 
\thmref{T:special case of special formula} and \eqref{E:lower bound for adjoint} that  
\begin{align}\label{E:contribution of level one form}
\begin{split}
\sum_{g\in\cB_1}|\langle f_\cK|_{\frak{H}},g\rangle|^2
&\ll_{\cD,T,\epsilon}
q^{-1+\epsilon}\sum_{g\in\cB_1}
(1+t_g^2)^\epsilon(4+2T+|t_g|)^{2+\epsilon}
\frac{L_\infty(\frac{1}{2},{\rm Ad}^2 f\x g)L_\infty(\frac{1}{2},g\x\tau_\cK)}{L_\infty(1,{\rm Ad}^2 g)}\\
&\ll_{\cD,T,\epsilon}
q^{-1+\epsilon}. 
\end{split}
\end{align}
Here we have used the fact that $|\Gamma(x+iy)|\sim\sqrt{2\pi}|y|^{x-1/2}e^{-\pi|y|/2}$ when 
$|y|\to\infty$. Note that by the bound toward the Ramanujan conjecture for $\G$, the archimedean
local factors are non-zero complex numbers.

Recall that for $g\in\cB_1$, we have defined $g^*\in L^2(X_0(q))$ by 
\eqref{E:g*}, and $\cB_1^*=\stt{g^*\mid g\in\cB_1}$. Let $g'(\tau):=g(\sigma_0\cdot\tau)=g(q\tau)$.
By \lmref{L:Atkin-Lehnner for base change} and changing the variable, we see that 
$\langle f_\cK|_{\frak{H}}, g'\rangle=\pm\langle f_\cK|_{\frak{H}}, g\rangle$.  Therefore
the formula in \thmref{T:special case of special formula} remains the same if we 
replace $g$ by $g'$. Because $\left(1-\frac{q\lambda_g(q)^2}{(q+1)^2}\right)\asymp 1$, we have
\begin{equation}\label{E:contribution of raising form}
\sum_{g\in\cB_1}
|\langle f_\cK|_{\frak{H}},g^*\rangle|^2
\ll
2\sum_{g\in\cB_1}
|\langle f_\cK|_{\frak{H}},g\rangle|^2
+2\sum_{g\in\cB_1}
|\langle f_\cK|_{\frak{H}},g'\rangle|^2
\ll_{\cD,T,\epsilon} q^{-1+\epsilon}
\end{equation}
by  \eqref{E:contribution of level one form}. Thus we find that 
$\sum_{g\in\cB_1\cup\cB_1^*}|\langle f_\cK|_{\frak{H}},g\rangle|^2\ll_{\cD,T,\epsilon} 
q^{-1+\epsilon}$ as desired.

\subsubsection{Contribution of Eisenstein series}
In \cite[page 1838]{Blomer2013}, the contribution of Eisenstein series relates to 
the Rankin-Selberg $L$-function $L(s,f\x\b{f})$. In this article; however, it relates to the $Asai$ 
$L$-$function$ $L(s,{\rm As}f_\cK)$ (\cite{Asai1977}, \cite{Flicker1988}). 
By \lmref{L:Atkin-Lehnner for base change} and the fact that  $\sigma_0$ 
is a scaling matrix for the cusp $\frak{a}=0$, the contributions of the two cusps are essentially 
the same, and hence it suffices to consider the contribution of the $\infty$ cusp. 
  
\begin{lm}\label{L:unfold Asai L-fcn}
We have
\[
\ol{\langle f_\cK|_{\frak{H}},E_\infty(\cdot,s)\rangle}
=
2^{-2}\ol{\rho_{f_\cK}}|\cD|^{s/2}\cdot
\frac{\zeta_{\Q_q}(2s)}{L(s,\tau_q)}\cdot
\frac{\Lambda(s,{\rm As}f_\cK)}{\xi_\Q(2s)}.
\]
Here $\tau_q$ is the quadratic character of $\Q_q^{\x}$ associated to $\cK_q/\Q_q$ by the local
class field theory. 
\end{lm}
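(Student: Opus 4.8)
The plan is to compute the Petersson-type inner product $\langle f_\cK|_{\frak H}, E_\infty(\cdot,s)\rangle$ by the standard Rankin--Selberg unfolding, using the fact that the Eisenstein series $E_\infty(\tau,s)$ is itself an average of $\Im(\gamma\tau)^s$ over $\Gamma_\infty\backslash\Gamma_0(q)$. Unfolding the integral against $f_\cK|_{\frak H}$ over $\Gamma_0(q)\backslash\frak H$ turns it into an integral over $\Gamma_\infty\backslash\frak H$ of $f_\cK|_{\frak H}(\tau)\,\overline{y^{s}}$ against the invariant measure, which after inserting the Fourier expansion \eqref{E:Fourier expansion for D>0} or \eqref{E:Fourier expansion for D<0} collapses (only the $\alpha$ with $\alpha = \sigma(\alpha)$ on the relevant line survive, i.e. $\alpha$ ranges over $\Z_{>0}$ up to units) to a Dirichlet series $\sum_{n\geq 1}\lambda_{f_\cK}(n)\,n^{-s}$ times an archimedean Mellin transform of a product of $K$-Bessel functions.

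First I would set up the unfolding carefully, paying attention to the scaling factor $|\cD|^{-1/2}$ appearing in the arguments of the Bessel functions in \eqref{E:Fourier expansion for D>0}--\eqref{E:Fourier expansion for D<0}; this is the source of the $|\cD|^{(s-1)/2}$ factor. Next I would identify the resulting Dirichlet series: by construction $\lambda_{f_\cK}(\alpha)$ are the Hecke eigenvalues of $\bff_\cK$, and summing over $0\neq\alpha\in\cO$ intersected with the diagonal $\R$ (resp. the real line in $\cK$) produces, after collecting the contribution of the Hecke relations over $\cK$, the Asai $L$-function $L(s,\mathrm{As}\,f_\cK)$ — this is precisely the Rankin--Selberg integral computing the Asai transfer, as in \cite{Flicker1988}. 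The factor $\zeta_\Q(2s)^{-1}$ comes from the standard $\Gamma_\infty\backslash\Gamma_0(q)$ coset normalization of the Eisenstein series, and the local correction $\zeta_{\Q_q}(2s)/L(s,\tau_q)$ at the ramified prime $q$ reflects that $\pi_{\cK,q}$ is Steinberg (or a product of Steinbergs) rather than unramified, so the $q$-Euler factor of the naive Dirichlet series differs from that of the completed $L(s,\mathrm{As}\,f_\cK)$ by exactly this ratio; I would extract it by a direct local computation with the newvector $W_q$ from the proof of \lmref{L:norm of f_K}.

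The archimedean factor $\Gamma^\cD(s)$ is then the Mellin transform $\int_0^\infty K_{it_f}(y)^2\, y^{s}\,\frac{dy}{y}$ in the real case (giving the product of four $\Gamma(s/2\pm it_f)$, $\Gamma(s/2)^2$ divided by $\Gamma(s)$, up to the power of $2$ and $\pi$), and $\int_0^\infty K_{2it_f}(y)^2\,y^{2s}\,\frac{dy}{y}$ in the imaginary case (giving $\Gamma(s/2+it_f)\Gamma(s/2-it_f)$ up to elementary factors); both are classical Weber--Schafheitlin integrals. The factor $2\rho_{f_\cK}$ out front is the Fourier coefficient normalization from \eqref{E:Fourier expansion for D>0}--\eqref{E:Fourier expansion for D<0} together with the factor $2$ coming from summing over $\pm\alpha$ (units $\pm 1$), and the overall complex conjugation in $\ol{\langle\cdot,\cdot\rangle}$ is handled by noting $f_\cK$ has real Hecke eigenvalues and $E_\infty(\cdot,s)$ has the stated self-duality for $s$ real.

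The main obstacle I anticipate is the bookkeeping at the ramified prime $q$: getting the local factor $\zeta_{\Q_q}(2s)/L(s,\tau_q)$ exactly right — including the interaction between the level structure $K_0(q\cO)$, the Atkin--Lehner eigenvalue built into $f$, and the fact that $\cK_q/\Q_q$ may be split, inert, or ramified (though coprimality of $q$ and $\cD$ rules out ramification) — requires care, as does tracking all powers of $2$ and $\pi$ uniformly through the two cases $\cD>0$ and $\cD<0$. The unfolding and the identification of the global Dirichlet series with $L(s,\mathrm{As}\,f_\cK)/\zeta_\Q(2s)$ are routine given \cite{Flicker1988}; the Bessel-function Mellin transforms are classical; so the real work is the local computation at $q$ and the constant-chasing.
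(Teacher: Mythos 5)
Your plan follows exactly the paper's route: unfold $\langle f_\cK|_{\frak H}, E_\infty(\cdot,s)\rangle$ to $\int_{\Gamma_\infty\backslash\frak H}\overline{f_\cK|_{\frak H}(\tau)}\,y^{s-2}\,dx\,dy$, insert the Fourier expansion of $f_\cK$ restricted to $\frak H$, let the $x$-integral pick out $\alpha\in\Z$, evaluate the Bessel Mellin transform to obtain $\Gamma^\cD(s)$, and identify the resulting Dirichlet series $\sum_{0\neq n}\lambda_{f_\cK}(n)n^{-s}$ with the Asai $L$-function (the paper does the last step by citing \cite[Theorem 2]{Asai1977} together with the explicit $q$-Euler factor $L_q(s,{\rm As}f_\cK)=\zeta_{\Q_q}(s+1)L(s,\tau_q)$, which amounts to the same thing as the local newvector computation you propose). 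One minor misattribution worth noting: the $\zeta_\Q(2s)^{-1}$ does not come from a normalization built into $E_\infty(\cdot,s)$ — the unfolding here produces the bare Rankin integral with no $\zeta$-factor — but rather from the Asai/Rankin--Selberg identity itself, which relates $\sum_n\lambda_{f_\cK}(n)n^{-s}$ to $L(s,{\rm As}f_\cK)/\zeta_\Q(2s)$ up to the $q$-local correction.
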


\begin{proof}
Suppose that $\cD<0$. By the Fourier expansion \eqref{E:Fourier expansion for D<0} of $f_\cK$ and 
the fact that $\lambda_{f_\cK}(\alpha)\in\R$ for all $0\neq\alpha\in\cO$, one has for $\tau=x+iy$
\begin{align*}
\ol{\langle f_\cK|_{\frak{H}},E_\infty(\cdot,s)\rangle}
&=
\int_{X_0(q)}
\ol{f_\cK|_\frak{H}(\tau)}E_\infty(\tau,s)\,d\mu(\tau)
=
\int_{\Gamma_\infty\backslash\frak{H}}
\ol{f_\cK|_\frak{H}(\tau)}y^{s-2}dxdy\\
&=
\ol{\rho_{f_\cK}}\sum_{0\neq\alpha\in\cO}
\lambda_{f_\cK}(\alpha)
\int_0^\infty
K_{2it_f}(4\pi|\alpha|^{1/2}_\C|\cD|^{-1/2}y)y^{s-1}dy
\int_0^1 e^{-4\pi i|\cD|^{-1/2}{\rm Im}(\alpha)x}dx.
\end{align*}
Since $2|\cD|^{-1/2}{\rm Im}(\alpha)\in\Z$ for every $\alpha\in\cO$, it follows that the inner integral vanishes
when ${\rm Im}(\alpha)\neq 0$ and is equal to $1$ otherwise. On the other hand, for $0\neq\alpha=n\in\Z$, we have
\[
\int_0^\infty
K_{2it_f}(4\pi |n|^{1/2}_\C|\cD|^{-1/2}y)y^{s-1}dy
=
2^{-3}|n|^{-s}|\cD|^{s/2}\frac{L_\infty(s,{\rm As}\,f_\cK)}{\zeta_\R(2s)}
\]
by \lmref{L:integration formula for Bessel function} (1) and the shape 
$L_\infty(s,{\rm As}\,f_\cK)=\zeta_\R(s+2it_f)\zeta_\R(s-2it_f)\zeta_\C(s)$ of the Asai $L$-factor at $\infty$.
Using the fact that 
$\lambda_{f_\cK}(-\alpha)=\lambda_{f_\cK}(\alpha)$ for all $0\neq\alpha\in\cO$, one can argue as in 
\cite[Theorem 2]{Asai1977} to obtain
\begin{align*}
\ol{\langle f_\cK|_{\frak{H}},E_\infty(\cdot,s)\rangle}
=
2^{-2}\ol{\rho_{f_\cK}}|\cD|^{s/2}\frac{L_\infty(s,{\rm As}\,f_\cK)}{\zeta_\R(2s)}
\sum_{n=1}^\infty
\frac{\lambda_{f_\cK}(n)}{n^s}
=
2^{-2}\ol{\rho_{f_\cK}}|\cD|^{s/2}
\cdot
\frac{\zeta_{\Q_q}(s+1)\Lambda^{(q)}(s,{\rm As}f_\cK)}{\xi^{(q)}_\Q(2s)}.
\end{align*} 
Here $\Lambda^{(q)}(s,{\rm As}\,f_\cK)$ and $\xi^{(q)}(2s)$ denote the partial $L$-functions without the local $L$-factors
at $q$. To conclude,  we remind that 
\[
L_q(s,{\rm As}f_\cK)=\zeta_{\Q_q}(s+1)L(s,\tau_q).
\]
The computation for the case $\cD>0$ is similar so we left it to the reader.
\end{proof}

The $L$-function $L(s,{\rm As}f_\cK)$ has the factorization
\[
L(s,{\rm As}f_\cK)=L(s,{\rm Ad}^2 f)L(s,\tau_\cK).
\]
By \lmref{L:norm of f_K}, \lmref{L:unfold Asai L-fcn} and the convexity bound 
(\cite[Theorem 5.41]{IwaniecKowalski2004}) of $L(s,{\rm Ad}^2 f)$ at $s=1/2+it$ for $t\in\R$
together with the lower bound \eqref{E:lower bound for adjoint} of $L(1,{\rm Ad}^2 f_\cK)$, we obtain 
\begin{equation}\label{E:contribution of Eisenstein series}
\int_\R
|\langle f_\cK|_\frak{H},E_\infty(\cdot,1/2+it)\rangle|^2dt
\ll_{\cD,T,\epsilon}
q^{-2+\epsilon}\int_\R
\frac{|\Lambda(\frac{1}{2}+it, {\rm As}\,f_\cK)|^2}{|\xi_\Q(1+2it)|^2}dt
\ll_{\cD,T,\epsilon}
q^{-1+\epsilon}.
\end{equation}
Now \propref{P:first estimate} follows from \eqref{E:contribution of level one form}, 
\eqref{E:contribution of raising form} and \eqref{E:contribution of Eisenstein series}.

\subsection{The main terms}
Recall the Wely's law $\stt{f\in\cB_q\mid |t_f|\leq T}\ll qT^2$. It then follows from 
\propref{P:first estimate} that
\[
\sum_{\underset{t_f\ll T}{f\in\cB_q}}
\|f_\cK|_\frak{H}\|_2^2\ll_{\cD,T,\epsilon}
\sum_{\underset{t_f\ll T}{f\in\cB_q}}\sum_{\underset{g\,\text{even}}{g\in\cB_q}}
|\langle f_\cK|_\frak{H},g\rangle|^2
+q^\epsilon,
\]
and hence \thmref{T:main result} will follow from the following proposition.
\begin{prop}\label{P:main term estimate}
We have
\[
\sum_{\underset{t_f\ll T}{f\in\cB_q}}\sum_{\underset{g\,\text{even}}{g\in\cB_q}}
|\langle f_\cK|_\frak{H},g\rangle|^2
\ll_{\cD,T,\epsilon}
q^\epsilon.
\]
\end{prop}
The rest of this subsection is devoted to prove \propref{P:main term estimate}.

\subsubsection{Main transformation}
Let $f,g\in\cB_q$ with $g$ even. 
Note that $L(1,{\rm Ad}^2 f\x\tau_\cK)>0$ since both $L(1,{\rm Ad}^2 f_\cK)$ and 
$L(1,{\rm Ad}^2 f)$ are positive. Furthermore, we have a lower bound
\[
L(1,{\rm Ad}^2 f\x\tau_\cK)\gg_{\cD,T,\epsilon}q^{-\epsilon}.
\]
By \thmref{T:special case of special formula} and the factorization 
\eqref{E:factorization adjoint L-fcn for base change} and the above lower bound, 
we find that \footnote{The term $e^{-\frac{3\pi}{2}|t_g|}$ in 
\cite[page 1838]{Blomer2013} should be replaced by $e^{-\pi|t_g|}$; however, this does not affect the
proofs.}
\begin{align*}
\sum_{\underset{t_f\ll T}{f\in\cB_q}}\sum_{\underset{g\,\text{even}}{g\in\cB_q}}
|\langle f_\cK|_\frak{H},g\rangle|^2
&\ll_{\cD,T,\epsilon}
q^{-2+\epsilon}
\sum_{\underset{t_f\ll T}{f\in\cB_q}}\sum_{\underset{g\,\text{even}}{g\in\cB_q}}
\frac{L(\frac{1}{2},{\rm As}f_\cK\x g)}{L(1,{\rm Ad}^2 f)L(1,{\rm Ad}^2 g)}e^{-\pi|t_g|}\\
&\ll_{\cD,T,\epsilon}
q^{-2+\epsilon}
\sum_{\underset{t_f\ll T}{f\in\cB_q}}\sum_{\underset{g\,\text{even}}{g\in\cB_q}}
\frac{L(\frac{1}{2},{\rm As}f_\cK\x g)}{L(1,{\rm Ad}^2 f)L(1,{\rm Ad}^2 g)}
G_1(0,t_g)G_2(0,t_g,t_f)h(t_g).
\end{align*}
Here we have inserted artificially the factor $G_1(0,t_g)G_2(0,t_g,t_f)$ by the inequality
\eqref{E:lower bound for G_1, G_2} and the non-negativity of $L(1/2,{\rm As}f_\cK\x g)$.
We also replace the weight function $e^{-\pi|t|}$ by the function
\[
h(t):={\rm cosh}\left(\frac{t}{2A}\right)^{-2\pi A}
\prod_{j=0}^A\left(t^2+\left(\frac{1}{2}+j\right)^2\right).
\] 
Note that $h(t)$ is holomorphic in $|{\rm Im}(t)|<\pi A$ and has zeros at those of 
$\prod_{j=0}^A\left(t^2+\left(\frac{1}{2}+j\right)^2\right)$ in this region. Moreover,
$h(t)\gg e^{-\pi|t|}$ for $t\in\cT$. Applying \lmref{L:weighted sum of L-fcn}, we arrive at
\begin{equation}\label{E:main term}
\sum_{\underset{t_f\ll T}{f\in\cB_q}}\sum_{\underset{g\,\text{even}}{g\in\cB_q}}
|\langle f_\cK|_\frak{H},g\rangle|^2
\ll_{\cD,T,\epsilon}
q^\epsilon
\sum_{f\in\cB_q}
\frac{2h(t_f)}{q'L(1,{\rm Ad}^2 f)}
\sum_{\underset{g\,\text{even}}{g\in\cB_q}}
\frac{2h(t_g)}{q'L(1,{\rm Ad}^2 g)}
(1-\lambda_g(q)q^{1/2})S
\end{equation}
where $q'=q^2/(q-1)\asymp q$ and 
\[
S=\sum_{n,m=0}^\infty
\frac{\lambda_g(n)\tau_\cK(n)\lambda_{{\rm Ad}^2 f\x g}(m)}{(mn)^{1/2}}
V_1\left(\frac{n}{q^{1/2}|\cD|};t_g\right)
V_2\left(\frac{m}{q^2};t_g,t_f\right).
\]
The main difference between our case and the case in \cite[page 1838]{Blomer2013}
is that we have the additional factors $\tau_\cK(n)$; however, this difference is mild and 
the proof in \cite{Blomer2013} still works in our case. 
It will be convenient to remove the terms with $q\mid nm$ in $S$. In fact, as argued in 
\cite[page 1839]{Blomer2013}, the terms $n,m$ with $q\mid n$ and $q\mid m$ in $S$
contribute at most $O(q^{-1/4+\epsilon})$. Thus by 
\cite[(3.1)]{Blomer2013} and \eqref{E:main term}, 
we are left with estimating $\Sigma(q,q):=\Sigma_1(q,q)-\Sigma_2(q,q)$ where
\begin{align*}
\Sigma_1(q,q)
:=&
\sum_{f\in\cB_q}
\frac{2h(t_f)}{q'L(1,{\rm Ad}^2 f)}
\sum_{\underset{g\,\text{even}}{g\in\cB_q}}
\frac{2h(t_g)}{q'L(1,{\rm Ad}^2 g)}
\sum_{q\nmid abdk}
\frac{\mu(d)}{ab^2d^{3/2}k}\\
&\x
\sum_{q\nmid nm}
\frac{\tau_\cK(n)\lambda_g(n)\lambda_g(a^2d m)\lambda_f(k^2)\lambda_f(m^2)}{(nm)^{1/2}}
V_1\left(\frac{n}{q^{1/2}|\cD|};t_g\right)
V_2\left(\frac{a^2b^4d^3mk^2}{q^2};t_g,t_f\right)
\end{align*}
and
\begin{align*}
\Sigma_2(q,q)
:=&
q^{1/2}\sum_{f\in\cB_q}
\frac{2h(t_f)}{q'L(1,{\rm Ad}^2 f)}
\sum_{\underset{g\,\text{even}}{g\in\cB_q}}
\frac{2h(t_g)}{q'L(1,{\rm Ad}^2 g)}
\sum_{q\nmid abdk}
\frac{\mu(d)}{ab^2d^{3/2}k}\\
&\x
\sum_{q\nmid nm}
\frac{\tau_\cK(n)\lambda_g(qn)\lambda_g(a^2d m)\lambda_f(k^2)\lambda_f(m^2)}{(nm)^{1/2}}
V_1\left(\frac{n}{q^{1/2}|\cD|};t_g\right)
V_2\left(\frac{a^2b^4d^3mk^2}{q^2};t_g,t_f\right).
\end{align*}
We would like to apply the formulae in \cite[(2.16)-(2.19)]{Blomer2013} to the spectral sum
over $f$ and $g$ in $\Sigma_1(q,q)$ and $\Sigma_2(q,q)$; however, since the terms 
involving the oldforms
and the Eisenstein series are missing, we complete the formula by adding and subtracting the missing 
terms. This gives us $8$ other quantities $\Sigma(*,?)=\Sigma_1(*,?)-\Sigma_2(*,?)$ with 
$*,?\in\stt{q,1,\cE}$ in analogy with $\Sigma(q,q)=\Sigma_1(q,q)-\Sigma_2(q,q)$. For example,
we define 
\begin{align*}
\Sigma_1(1,\cE)
:=&
\sum_{f\in\cB_1}
\frac{2h(t_f)}{(q+1)L(1,{\rm Ad}^2 f)}
\int_{\R}
\frac{h(t)}{q''|\zeta^{(q)}_\Q(1+2it)|^2}
\sum_{q\nmid abdk}
\frac{\mu(d)}{ab^2d^{3/2}k}\\
&\x
\sum_{q\nmid nm}
\frac{\tau_\cK(n)c_1(f,q)\lambda_f(k^2)\lambda_f(m^2)\eta(n,t)\eta(a^2dm,-t)}{(nm)^{1/2}}
V_1\left(\frac{n}{q^{1/2}|\cD|};t\right)
V_2\left(\frac{a^2b^4d^3mk^2}{q^2};t,t_f\right)
\frac{dt}{\pi}
\end{align*}
and
\begin{align*}
\Sigma_2(1,\cE)
:=&
\sum_{f\in\cB_1}
\frac{2h(t_f)}{(q+1)L(1,{\rm Ad}^2 f)}
\int_{\R}
\frac{h(t)}{q^{3/2}|\zeta^{(q)}_\Q(1+2it)|^2}
\sum_{q\nmid abdk}
\frac{\mu(d)}{ab^2d^{3/2}k}\\
&\x
\sum_{q\nmid nm}
\frac{\tau_\cK(n)c_2(f,q)\lambda_f(k^2)\lambda_f(m^2)\eta(n,t)\eta(a^2dm,-t)}{(nm)^{1/2}}
V_1\left(\frac{n}{q^{1/2}|\cD|};t\right)
V_2\left(\frac{a^2b^4d^3mk^2}{q^2};t,t_f\right)
\frac{dt}{\pi}
\end{align*}
and similarly for all the combinations.
Here $\eta(n,t)=\sum_{ad=|n|}(a/d)^{it}\in\R$ appears in the Fourier coefficients of 
the Eisenstein series as in \cite[page 1830]{Blomer2013} and $q''=q^2/(q+1)$. 
The terms $c_1(f,q)$, $c_2(f,q)$ are given by \cite[(2.11), (2.12)]{Blomer2013} with 
$g$ replaced by $f$. Since $c_1(f,q)\asymp 1$ and $c_2(f,q)\ll 1$, these terms certainly can be 
ignored in the estimates below. Now the Kuznetsov formula can be applied and we obtain
\begin{equation}\label{E:sigma(q,q)}
\Sigma(q,q)
=-\sum_{(*,?)\in\stt{q,1,\cE}^2,\,(*,?)\neq (q,q)}
(\Sigma_1(*,?)-\Sigma_2(*,?))
+
\sum_{q\nmid abdkmn}
\frac{\mu(d)\tau_\cK(n)}{(nm)^{1/2}ab^2d^{3/2}k}
\sum_{\alpha,\beta,\gamma\in\stt{1,2}}M_{\alpha}^{\beta,\gamma}
\end{equation} 
where the terms $M_\alpha^{\beta,\gamma}$ are exactly those in \cite[pages 1840-1841]{Blomer2013} 
with $\frac{n}{q^{1/2}}$ replaced by $\frac{n}{q^{1/2}|\cD|}$.
To complete the proof of  \propref{P:main term estimate}, we show that the terms on the RHS of
\eqref{E:sigma(q,q)} are all $O(q^\epsilon)$. 

\subsubsection{Contributions of oldforms and Eisenstein series}
\label{SSS:contribution of oldforms and Eisenstein series}
We consider the terms $\Sigma_1(*,?)$ and $\Sigma_2(*,?)$ on the RHS of \eqref{E:sigma(q,q)}.
As in \cite[Section 6]{Blomer2013}, the idea is to use the inverse Mellin transforms and 
the convexity bound of the $L$-functions. 
We consider the cases $\Sigma_1(\cE,q)$ and $\Sigma_2(1,\cE)$ as the other cases 
require only notational changes. 
By the inverse Mellin transform, we find that (see the remark in \cite[page 1833]{Blomer2013})
\begin{align*}
\Sigma_1(\cE,q)
=&
\int_\R
\frac{h(t)}{q''|\zeta_\Q^{(q)}(1+2it)|^2}
\sum_{\underset{g\,\text{even}}{g\in\cB_q}}
\frac{2h(t_g)}{q'L(1,{\rm Ad}^2 g)}
\int_{2-i\infty}^{2+i\infty}\int_{2-i\infty}^{2+i\infty}
\prod_{\epsilon_1\in\stt{\pm, 0}}
L^{(q)}(1/2+2\epsilon_1 it+u,g)\\
&
\x L^{(q)}(1/2+v, g\x\tau_\cK)
\wh{V}_1(v;t_g)\wh{V}_2(u;t_g,t)q^{2u+v/2}|\cD|^v\frac{dudv}{(2\pi i)^2}\frac{dt}{\pi}.
\end{align*}
We shift both contours to ${\rm Re}(u)={\rm Re}(v)=\epsilon$ and use the convexity
bound (\cite[Theorem 5.41]{IwaniecKowalski2004}) of $L(s,g)$ and $L(s,g\x\tau_\cK)$ in 
${\rm Re}(s)>1/2$ together with \cite[Lemma 1]{Blomer2013} to deduce that
$\Sigma_1(\cE,q)\ll_{\cD,\epsilon} q^{\epsilon}$. 
Similar argument shows
\begin{align*}
\Sigma_2(1,\cE)
=&
\sum_{f\in\cB_1}
\frac{2c_2(f,q)h(t_f)}{(q+1)L(1,{\rm Ad}^2 f)}
\int_{\R}
\frac{h(t)}{q^{3/2}|\zeta^{(q)}_\Q(1+2it)|^2}
\int_{2-i\infty}^{2+i\infty}\int_{2-i\infty}^{2+i\infty}
\prod_{\epsilon_1\in\stt{\pm}}
L^{(q)}(1/2+\epsilon_1 it+u,{\rm Ad}^2 f)\\
&\x
\prod_{\epsilon_2\in\stt{\pm}}
L^{(q)}(1/2+\epsilon_2 it+v,\tau_\cK)
\wh{V}_1(v;t)\wh{V}_2(u;t,t_f)q^{2u+v/2}|\cD|^v\frac{dudv}{(2\pi i)^2}\frac{dt}{\pi}.
\end{align*}
We also shift both contours to ${\rm Re}(u)={\rm Re}(v)=\epsilon$ to deduce 
$\Sigma_2(1,\cE)\ll_{\cD,T,\epsilon} q^{-5/2+\epsilon}$. Unlike the case given in 
\cite[section 6]{Blomer2013}, where he needs $\wh{V}_1(1/2\pm it, t)=0$ to kill poles 
of the Riemann zeta function, the $L$-function $L(s,\tau_\cK)$ in our case is $entire$. 
Therefore the above argument still works in our case even when our $V_1$ does not satisfy
$\wh{V}_1(1/2\pm it, t)=0$ when $\cD<0$.

\subsubsection{Estimating character sums}
We estimate the terms $M_\alpha^{\beta,\gamma}$ on the RHS of \eqref{E:sigma(q,q)}. We explain how
to modify the proof in \cite[Section 8]{Blomer2013} so that it can apply to our case.
The bound
\[
\sum_{q\nmid abdkmn}
\frac{1}{(nm)^{1/2}ab^2d^{3/2}k}(|M_1^{1,1}|+|M_1^{1,2}|+|M_1^{2,1}|)\ll_{\cD,\epsilon} q^\epsilon
\]
in \cite[page 1845]{Blomer2013} certainly holds in our case. Note that $M^{1,1}_2=M^{2,1}_2=0$.
For the terms $M_2^{1,2}$, $M_1^{2,2}$ and $M_2^{2,2}$, the idea is to replace $\tau_\cK$ by any bounded 
smooth function $\tau^*_\cK$ on $\R$ such that $\tau^*_\cK(n)=\tau_\cK(n)$ for every $n\in\Z$. Then the proof 
in \cite[Section 8]{Blomer2013} works in our case. We illustrate this by considering the term 
$M_2^{1,2}$. Recall that we need to estimate
\[
\sum_{q\nmid abdkmn}
\frac{\mu(d)\tau_\cK(n)M_2^{1,2}}{ab^2 d^{3/2}m^{3/2}n^{1/2}}
=
q^{1/2}\sum_{q\nmid abdmn}\sum_{q\mid c}
\frac{\mu(d)\tau_\cK(n)S(qn,a^2dm,c)}{ab^2cd^{3/2}m^{3/2}n^{1/2}}
\cW^{1,2}\left(\frac{n}{q^{1/2}|\cD|},\frac{a^2b^4d^3m^3}{q^2};\frac{a\sqrt{dmnq}}{c}\right).
\]
As argued in \cite[page 1845]{Blomer2013}, we need to bound
\begin{align*}
\frac{1}{q^{1/2}N^{1/2}}
\sum_{q\nmid abcdm}
\frac{w_2(a/A)w_3(c/C)}{ab^2cd^{3/2}m^{3/2}}
\left|
\sum_{n}S(n,a^2dm\b{q},c)\tau_\cK(n)w_1(n/N)
\cW^{1,2}\left(\frac{n}{q^{1/2}|\cD|},\frac{a^2b^4d^3m^3}{q^2};\frac{a\sqrt{dmx}}{cq^{1/2}}\right)
\right|
\end{align*}
where $w_1$, $w_2$ and $w_3$ are smooth weight functions with supports in $[1,2]$ and we may 
assume that
\[
a\sqrt{dmnq}\leq q^{7/4+\epsilon},
\quad
A\leq\frac{q^{1+\epsilon}}{b^2(dm)^{3/2}},
\quad
C\leq\frac{A\sqrt{dmN}}{q^{1/2-\epsilon}}
\] 
by the decay property of $\cW^{1,2}$ in \cite[Lemma 5]{Blomer2013} . Using \cite[Lemma 2]{Blomer2013} with
$\alpha=a^2dm\b{q}$ and $\gamma=c$, the $n$-sum is less than
\[
\sum_{h\neq 0}
\left|
\int_0^\infty
w^*_1\left(\frac{x}{N}\right)
\cW^{1,2}\left(\frac{x}{q^{1/2}|\cD|},\frac{a^2b^4d^3m^3}{q^2};\frac{a\sqrt{dmx}}{cq^{1/2}}\right)
{\rm exp}\left(-2\pi i\frac{xh}{c}\right)dx
\right|.
\]
Here $w^*_1(x):=w_1(x)\tau^*_\cK(Nx)$ is again a smooth function with support in $[1,2]$. Therefore
the bound in \cite[(7.4)]{Blomer2013} is still applicable (with $\rho_1=\frac{1}{q^{1/2}|\cD|}$) 
and the rest of the proof is identical to that of \cite[page 1846]{Blomer2013}. 
The modifications of the proofs for the terms $M_1^{2,2}$ and 
$M_2^{2,2}$ are similar. In this way, the proof of \propref{P:main term estimate} and hence
\thmref{T:main result} is completed.

\section{Special value formula}\label{S:special value formula}
In this section, we state our special value formulae for the triple product $L$-functions 
in the adelic setting. 



\subsection{Notations}
We introduce some notations which will be constantly used in the rest of this article. Let $F$ be 
a number field or a local field. We denote by $E$ an \etale cubic $F$-algebra
and by $D$ a quaternion $F$-algebra. Put
\begin{equation}\label{E:c_E}
c_E=\begin{cases}
   3\quad\text{if}\,\,E=F\x F\x F,\\
   2\quad\text{if}\,\,E=F'\x F,\,\text{where $F'$ is a quadratic field extension of $F$},\\
   1\quad\text{if}\,\,E\,\,\text{is a cubic field extension of $F$}.
 \end{cases} 
\end{equation}
Let $R$ be a $F$-algebra. If $F$ is a local field, we put $D(R)=D\ot_F R$. If $F$ is a number field, 
we let $R_v=F_v\ot_F R$, $\A_R=\A_F\ot_F R$ and $D(\A_R)=D\ot_F\A_R$. When $F$ is a number
field or a non-archimedean local field, the maximal order of $E$ (resp. $D(E)$) is denoted by $\cO_E$
 (resp. $\cO_{D(E)}$).

\subsection{Ichino's formula and root number}
We briefly review (a variant of) Ichino's formula as well as the root number for the triple product $L$-function 
in this subsection. The story of the triple product $L$-function began with the work of Garrett
(\cite{Garrett1987}) in which he gave an integral representation of the triple product $L$-function 
attached to three elliptic newforms. Piatetski-Shapiro, Rallis (\cite{PSR1987}) and 
Ikeda (\cite{Ikeda1989}, \cite{Ikeda1992}) later reformulated his work in the adelic setting. 
Using the integral representation of the triple product $L$-function and the Siegel-Weil formula, 
Harris and Kudla proved the Jacquet's conjecture for the triple product $L$-function in
\cite{HarrisKudla1991}, \cite{HarrisKudla2004}. See also \cite{Prasad2008}. 
Based on their works, Ichino gave a formula which relates
certain global period integral with the central value of the triple product $L$-function. 
Indeed, Ichino's formula is a special case of the so-called refined Gross-Prasad conjecture formulated in
\cite{IchinoIkeda2010}.  

\subsubsection{Ichino's formula}\label{SSS:Ichino formula}
Let $F$ be a number field and $\itPi$ be an irreducible unitary cuspidal automorphic representation 
of $\G(\A_E)$. We assume that
\begin{itemize}
\item
the central character of $\itPi$ is trivial on $\A_F^{\x}$;
\item
the Jacquet-Langlands lift $\itPi^D$ of $\itPi$ to $D^{\x}(\A_E)$ exists. 
\end{itemize} 
Define the global period integral by
\[
I(\phi\otimes\t{\phi})
=
\int_{\A_F^\x D^\x(F)\backslash D^\x(\A_F)}
\int_{\A_F^\x D^\x(F)\backslash D^\x(\A_F)}
\phi(x)\t{\phi}(\t{x})d^{\x}x\,d^{\x}\t{x}
\]
for $\phi\in\itPi^D$ and $\t{\phi}\in\t{\itPi}^D$, where $\t{\itPi}^D$ is the 
contragredient representation of $\itPi^D$, and $d^{\x}x$ and $d^{\x}\t{x}$ are the 
Tamagawa measures on $\A_F^\x\backslash D^\x(\A_F)$.
For each place $v$ of $F$, we choose a $D^\x(E_v)$-equivariant pairing 
$\langle\,\,,\rangle_v$ between $\itPi^D_v$ and $\t{\itPi}^D_v$. Define the local period integral by
\[
I_v(\phi_v\otimes\t{\phi}_v)
=
\frac{\zeta_{F_v}(2)}{\zeta_{E_v}(2)}\cdot
\frac{L_v(1, \itPi_v, {\rm Ad})}{L_v(\frac{1}{2},\itPi_v, r)}\cdot
\int_{F_v^\x\backslash D^\x(F_v)}
\langle\itPi^D_v(x_v)\phi_v,\,\t{\phi}_v\rangle_v d^{\x}x_v  
\]
for $\phi_v\in\itPi_v^D$ and $\t{\phi}_v\in\t{\itPi}^D_v$. For the definitions of the $L$-factors, 
see \cite[Pages 282-283]{Ichino2008}. In particular, $L(s,\itPi_v, r)$ is the local factor of the 
triple product $L$-function attached to $\itPi$, and the $L$-functions appearing in this
article are those defined from the Galois theoretic side.

Let $\langle\,\,,\rangle$ be the $D^\x(\A_E)$-equivariant pairing between
$\itPi^D$ and $\t{\itPi}^D$ defined by
\[
\langle \phi, \t{\phi}\rangle
=
\int_{\A^\x_E D^\x(E)\backslash D^\x(\A_E)}
\phi(h)\t{\phi}(h)d^{\x}h
\]
for $\phi\in\itPi^D$ and $\t{\phi}\in\t{\itPi}^D$, where $d^{\x}h$ is the Tamagawa measure
on $\A^\x_E\backslash D^\x(\A_E)$.
Assume that the Haar measures $dx_v$ are chosen so that the volume of  
${\rm P}\G(\cO_{F_v})$ is $1$ for almost all finite places of $F$.

\begin{thm}[\cite{Ichino2008}]\label{T:Ichino formula}
Let $\phi_j=\otimes_v\phi_{j,v}\in\itPi^D$, $\t{\phi}_j=\otimes_{v}\t{\phi}_{j,v}\in\t{\itPi}^D$ with 
$\langle\phi_j,\t{\phi}_j\rangle\neq 0$ for $j=1,2$. Then we have
\[
\frac{I(\phi_1\otimes\t{\phi}_1)}{\langle\phi_2,\t{\phi}_2\rangle}=
\frac{C}{2^{c_E}}\cdot
\frac{\xi_E(2)}{\xi_F(2)}\cdot
\frac{\Lambda(\frac{1}{2},\itPi, r)}{\Lambda(1,\itPi,{\rm Ad})}\cdot
\prod_{v}\frac{I_v(\phi_{1,v}\otimes\t{\phi}_{1,v})}{\langle\phi_{2,v},\t{\phi}_{2,v}\rangle_v}                                                     
\]
where $v$ runs through all places of $F$ and $C>0$ is the constant so that $d^{\x}x=C\prod_v d^{\x}x_v$.
\end{thm}

\begin{proof}
The case $\phi_1=\phi_2$, $\t{\phi}_1=\t{\phi}_2$ is the original Ichino's formula, while the formula stated here is a
corollary of that, which can be argued as follows. Fix isomorphisms $\itPi^D\cong\ot'_v\itPi^D_v$  and 
$\t{\itPi}^D\cong\ot'_v\t{\itPi}^D_v$, where $v$ runs 
through all places of $F$. Let $S$ be a finite set of places of $F$ containing all archimedean places so that for all 
$v\notin S$, we have
\begin{itemize}
\item
$D_v\cong{\rm M}_2(F_v)$;
\item
$\itPi_v$ and $\t{\itPi}_v$ are unramified;
\item
$\phi_{1,v}=\phi_{2,v}=\phi_v^0$ and $\t{\phi}_{1,v}=\t{\phi}_{2,v}=\t{\phi}_v^0$, where $\phi_v^0$, $\t{\phi}^0_v$
are the fixed spherical elements. 
\end{itemize}
For each $v$, define a $D^\x(E_v)$-equivariant pairing $\langle\,\,,\rangle'_v$ between $\itPi^D_v$ and $\t{\itPi}^D_v$
by $\langle\,\,,\rangle'_v=\langle\,\,,\rangle_v$ when $v\in S$, and 
$\langle\,\,,\rangle'_v=c^{-1}_v\langle\,\,,\rangle_v$ when $v\notin S$, where 
$c_v:=\langle\phi_v^0,\t{\phi}_v^0\rangle_v\neq 0$. Now $\prod_v\langle\,\,,\rangle'_v$ gives rise to a well-defined 
$D^\x(\A_E)$-equivariant pairing between $\itPi^D$ and $\t{\itPi}^D$ via the fixed isomorphisms, and hence there is a 
constant $c>0$ such that $\langle\,\,,\rangle=c\prod_v\langle\,\,,\rangle'_v$. It then follows from the choice of $S$ that 
\begin{equation}\label{E:ratio of norm}
\langle \phi_j,\t{\phi}_j\rangle=c\prod_{v\in S}\langle\phi_{j,v},\t{\phi}_{j,v}\rangle_v
\end{equation}
for $j=1,2$. On the other hand, if we let $I'_v(\phi_{1,v}\ot\t{\phi}_{1,v})$ be the local period integral defined as above
with $\langle\,\,,\rangle_v$ replaced by $\langle\,\,,\rangle'_v$ for each $v$, then we have 
\begin{equation}\label{E:variant Ichino formula}
\frac{I(\phi_1\otimes\t{\phi}_1)}{\langle\phi_1,\t{\phi}_1\rangle}=
\frac{C}{2^{c_E}}\cdot
\frac{\xi_E(2)}{\xi_F(2)}\cdot
\frac{\Lambda(\frac{1}{2},\itPi, r)}{\Lambda(1,\itPi,{\rm Ad})}\cdot
\prod_{v}I'_v(\phi_{1,v}\otimes\t{\phi}_{1,v})\cdot\prod_{v\in S} \langle\phi_{1,v},\t{\phi}_{1,v}\rangle^{-1}_v                                                    
\end{equation}
by \cite[Remark 1.3]{Ichino2008}. Note that the infinite product $\prod_{v}I'_v(\phi_{1,v}\otimes\t{\phi}_{1,v})$
is well-defined by \cite[Lemma 2.2]{Ichino2008}. The theorem now follows from 
\eqref{E:ratio of norm} and \eqref{E:variant Ichino formula}.
\end{proof}

\begin{Remark}\label{R:remark on Ichino's formula}
Since $\itPi^D$ is unitary, we have $\t{\itPi}^D\cong\b{\itPi}^D$, where $\b{\itPi}^D$ is the conjugate representation of 
$\itPi^D$. By the multiplicity one property, one sees that the space of $\t{\itPi}^D$ consists of the automorphic forms
$\b{\phi}$, for some $\phi\in\itPi^D$. As a consequence, one can replace the bilinear pairing between $\itPi^D$ and 
$\t{\itPi}^D$ (resp. $\itPi^D_v$ and $\t{\itPi}^D_v$) by the hermitian pairing on $\itPi^D$ (resp. $\itPi^D_v$) in Ichino's
formula, and one still get the same result.
\end{Remark}

\subsubsection{Root number}\label{SSS:root number}
Let $F$ be a local field and 
$\itPi$ be an irreducible admissible generic representation of $\G(E)$ whose central character is 
assumed to be trivial on $F^{\x}$. Suppose that $D$ is the 
division algebra, and let $\itPi^D$ be the local Jacquet-Langlands lift of $\itPi$ to $D^{\x}$. Thus 
$\itPi^D=0$ if $\itPi$ is not a discrete series representation of $\G(E)$. We fix a non-trivial 
additive character $\psi$ of $F$. Let $\omega_{E/F}$ be the quadratic character of $F^{\x}$ defined 
in \cite[Pages 1318-1319]{Prasad1992}
\footnote{The quadratic character $\omega_{E/F}$ in \cite{Prasad1992} is defined 
when $F$ is a non-archimedean local field; however, we can define $\omega_{E/F}$ in a 
similar way when $F$ is an archimedean local field.}.
Note that $\omega_{E/F}$ is trivial when $c_E=3$.
Let $\epsilon(s,\itPi,r,\psi)$ be the $\epsilon$-factor of the triple product 
$L$-function associated to $\itPi$ and $\psi$ defined from the Galois theoretic side 
(\cite[Introduction]{Ichino2008}, \cite[Section 3]{Tate1979}).
\footnote{Our $\epsilon$-factor is the $\epsilon_L$ given in \cite[(3.6)]{Tate1979}.}
In the following theorem, by a $\G(F)$-equivariant form on $\itPi$ when $F$ is archimedean, we 
actually mean an $(\frak{g},K)$-equivariant form on $\itPi$. The following theorem is essentially due to
Prasad, Loke and Gan.

\begin{thm}
\label{T:root number}
Suppose that $F\neq \C$.  
\begin{itemize}
\item[(1)]
We have
${\rm dim}_\C{\rm Hom}_{\G(F)}(\itPi,\C)+{\rm dim}_\C{\rm Hom}_{D^{\x}}(\itPi^D,\C)=1$.
\item[(2)]
The value $\epsilon(1/2,\itPi,r,\psi)\omega_{E/F}(-1)\in\stt{\pm 1}$ is independent of 
the choice of $\psi$. 
\item[(3)]
Assume that $\itPi$ is not a supercuspidal representation when $E$ is a field. Then we have
\[
\epsilon(1/2,\itPi,r,\psi)\omega_{E/F}(-1)=1 \quad\text{if and only if}\quad {\rm Hom}_{\G(F)}(\itPi,\C)\neq 0.
\]
\end{itemize}
\end{thm}

\begin{proof}
If $F$ is a non-archimedean local field, then the theorem follows from the results in the literature.
Indeed, the first two assertions are proved in 
\cite{Prasad1990}, \cite{Prasad1992} and \cite{WTG2008}. 
On the other hand, the third assertion is proved in \cite{Prasad1990}, \cite{Prasad1992} unless
$\itPi$ is a supercuspidal representation of $\G(E)$, where $E=F\x F\x F$ or $E=F'\x F$.
The validity of the third assertion for the remaining cases can be argued as follows. 
Gan in \cite{WTG2008} proves the third assertion without 
any assumption, but with $\epsilon(1/2,\itPi,r,\psi)$ replaced by $\epsilon_{{\rm PSR}}(1/2,\itPi,r,\psi)$, where
$\epsilon_{{\rm PSR}}(s,\itPi,r,\psi)$ is the $\epsilon$-factor defined by the local zeta integrals 
(\cite{PSR1987}, \cite{Ikeda1992}). On the other hand, by results in \cite{Rama2000} and \cite{CCI2020}, we have 
 \[
 \epsilon(s,\itPi,r,\psi)=\epsilon_{{\rm PSR}}(s,\itPi,r,\psi)
\]
if $\itPi$ is a local component of an irreducible cuspidal automorphic representation. Since it is known that every 
supercuspidal representation of $\G(E)$ can be embedded as a local component of an irreducible cuspidal 
automorphic representation (\cite[Proposition 5.1]{Shahidi1990}), the third assertion for the remaining cases is proved.

Suppose that $F=\R$. If $E=\R\x\R\x\R$, then the theorem 
follows from combining the results of \cite[Proposition 8.4, Theorem 9.3]{Prasad1990}
\footnote{The proof of Proposition 8.4 works for the archimedean case.} and
\cite[Theorem 1.2]{Loke2001}. It remains to prove the theorem when $E=\C\x\R$. 
Note that the second assertion can be argued as in the beginning of \cite[Section 8]{Prasad1992}. We now prove 
the first assertion. Write $\itPi=\pi'\bt\pi$, where $\pi'$ (resp. $\pi$) is an irreducible admissible generic representation
of $\G(\C)$ (resp. $\G(\R)$) with the minimal weight $k'$ (resp. $k$). 
We note that the assumption
on the central character implies $k'\equiv k\pmod 2$. The first assertion follows from
\cite[Theorem 1.3]{Loke2001} except that we need to show that if $\pi$ is an discrete series 
representation and $k>k'$, then ${\rm dim}_\C{\rm Hom}_{D^{\x}}(\itPi^D,\C)=1$. In this case,
we have $\itPi^D=\pi'\bt\pi^D$ with $\pi^D|_{{\rm SU}(2)}\cong{\rm Sym}^{k-2}(\C^{\oplus 2})$.
By \cite[Lemma 6.1 (ii)]{JLbook}, we have
\[
\pi'|_{{\rm SU}(2)}\cong\bigoplus_{n\geq k',\, n\equiv k'\pmod 2}{\rm Sym}^n(\C^{\oplus 2}).
\]
Since $D^{\x}\cong\R^{\x}_+\x{\rm SU}(2)$ and the central character of $\pi'\bt\pi^D$ is trivial on $\R^{\x}$, 
we find that 
\[
{\rm Hom}_{D^{\x}}(\pi'\bt\pi^D,\C)
=
{\rm Hom}_{{\rm SU}(2)}(\pi'\bt\pi^D,\C),
\]
which by the observation just mentioned is isomorphic to
\[
\prod_{n\geq k',\,n\equiv k'\pmod 2}
{\rm Hom}_{{\rm SU}(2)}({\rm Sym}^n(\C^{\oplus 2})\bt{\rm Sym}^{k-2}(\C^{\oplus 2}),\C).
\]
It's clear that the above space is one-dimensional. This proves the first assertion for the case $E=\C\x\R$.

The proof of the last assertion for the case $E=\C\x\R$ is similar to that of \cite[Proposition 8.2]{Prasad1992} 
modulo a little work. By \cite[Theorem 1.3]{Loke2001}, we know that $\pi'\bt\pi$ admits a 
non-trivial $\G(\R)$-invariant form if and only if $\pi$ is a principal series representation or $\pi$ is a 
discrete series representation with $k\leq k'$. We show that this is equivalent to that the character 
$(\mu\nu^\sigma)^{-1}$ of $\C^{\x}$ appears in $\pi$, where $\mu, \nu$ are characters of $\C^{\x}$ so that
$\pi'\cong\pi(\mu,\nu)$, and $\nu^\sigma(z):=\nu(\b{z})$.  Then the assertion follows 
from the proof of \cite[Proposition 8.2]{Prasad1992} together with the results of 
\cite{Tunnell1983}, \cite[Theorem 9.2]{WTGnote}. Note that we have 
$\mu\nu^\sigma(e^{i\theta})=e^{\pm ik'\theta}$. We embed $\C^{\x}$ into $\G(\R)$  by sending
$z=re^{i\theta}\in\C^{\x}$ to $\iota(z)=rk(\theta)\in\G(\R)$. Suppose that $\pi$ is the irreducible admissible 
generic subrepresentation of $\rho(\chi,\eta)$.  We have $\chi\eta^{-1}(-1)=(-1)^k$
and
(\cite[Theorem 5.11]{JLbook})
\[
\pi\cong\bigoplus_{|n|\geq k,\,n\equiv k\pmod 2}\C\cdot f_n
\]
where $f_n$ is the function on $\G(\R)$ characterized by
\[
f_n\left(\pMX{a}{b}{0}{d}k(\theta)\right)=\chi(a)\eta(d)\left|\frac{a}{d}\right|_\R^{\frac{1}{2}}e^{in\theta}.
\]
It's now clear that $(\mu\nu^\sigma)^{-1}$ appears in $\pi$. Indeed, if
$\mu\nu^\sigma(e^{i\theta})=e^{ik'\theta}$, then we have $f_{-k'}\in\pi$ and 
\[
\pi(\iota(z)) f_{-k'}=\chi\eta(r)e^{-ik'\theta} f_{-k'}=(\mu\nu^\sigma)^{-1}(r)e^{-ik'\theta} f_{-k'}
=(\mu\nu^\sigma)^{-1}(z) f_{-k'}.
\]
Similar arguments show that if $\mu\nu^\sigma(e^{i\theta})=e^{-ik'\theta}$, then $(\mu\nu^\sigma)^{-1}$ 
appears in $\pi$. This finishes the proof.
\end{proof}

\begin{Remark}
By \cite[Theorem 1.2]{Loke2001}, \thmref{T:root number} holds when $F=\C$ under a mild assumption.
\end{Remark}

\subsection{Global settings and assumptions} 
In principal, one can use \propref{P:regularization} and the results in \cite[Section 18]{JLbook2} to compute local 
period integrals for the complex place. By combining the results in the literature mentioned in the 
introduction, one can obtain special value formulae over arbitrary number fields under quite general settings. 
In this article; however, we let $F=\Q$ and make the following assumptions for simplicity.

As before, we use $v$ to indicate an arbitrary place of $\Q$ 
and $p$ (resp. $\infty$) to denote the finite place (resp. the real place) of $\Q$. 
We call the case $E_\infty\cong\R\x\R\x\R$ the real case,
while the case $E_\infty\cong\C\x\R$ the complex case.
Let $\itPi=\ot'_v\itPi_v$ be a unitary irreducible cuspidal automorphic representation of 
$\G(\A_E)$ with trivial central character 
\footnote{
In the computation of the local period 
integrals for the real place, we only assume the central character to be trivial on 
$\R^{\x}$. For more details, see \S\ref{SS:Calculations of local period integrals: archimedean case}}.  
We write $\itPi_v=\pi'_v\bt\pi_v$ when $c_{E_v}=2$ and 
$\itPi_v=\pi_{1,v}\bt\pi_{2,v}\bt\pi_{3,v}$ when $c_{E_v}=3$, where $\pi_v$ and $\pi_{j,v}$ for 
$j=1,2,3$ are unitary irreducible admissible generic representations of $\G(\Q_v)$, and $\pi'_v$
is a unitary irreducible admissible generic representation of $\G(F'_v)$, where $F'_v$ is the 
quadratic field extension of $\Q_v$ such that $E_v=F'_v\x\Q_v$ when $c_{E_v}=2$.   
Let $\psi=\prod_v\psi_v:\Q\backslash\A\to\C^{\x}$ be a non-trivial additive character.
 
\subsubsection{Assumptions}\label{SSS:global assumption}
We make the following assumptions on $\itPi$ throughout this section.
\begin{itemize}
\item
We assume $\pi_\infty$ and one of $\pi_{1,\infty}, \pi_{2,\infty}, \pi_{3,\infty}$ to be a principal 
series representation of $\G(\R)$.
\item
We assume the conductor $\frak{c}_{\itPi_p}$ of $\itPi_p$ to be $square$ $free$ for 
all $p$.
\item
We assume that the global root number $\epsilon(1/2,\itPi,r)=\prod_{v}\epsilon(1/2,\itPi_v,r,\psi_v)=1$,
where $v$ runs over all places of $\Q$. 
\end{itemize}

We explain in more detail for the second assumption. By a result of Casselman (\cite{Casselman1973}),
the conductor $\frak{c}_{\itPi_p}$ of $\itPi_p$ is the unique ideal of $\cO_{E_p}$ such that
${\rm dim}\,\itPi^{K_0(\mathfrak{c}_{\itPi_p})}=1$.
We say that $\frak{c}_{\itPi_p}$ is $square$ $free$ if it satisfies one of the following conditions:
\begin{itemize}
\item[(i)] $c_{E_p}=1$ and $\frak{c}_{\itPi_p}=\varpi^a_{E_p}\cO_{E_p}$ with $a\leq 1$,
\item[(ii)] $c_{E_p}=2$ and $\frak{c}_{\itPi_p}=(\varpi^a_{F_p'},p^b)\cO_{E_p}$ with $a,b\leq 1$,
\item[(iii)]  $c_{E_p}=3$ and $\frak{c}_{\itPi_p}=(p^a,p^b,p^c)\cO_{E_p}$ with $a,b,c\leq 1$.
\end{itemize}


\subsubsection{The quaternion algebra}\label{SSS:quaternion algebra}
By \thmref{T:root number} $(3)$ and the assumptions, there is a unique $indefinite$ quaternion 
$\Q$-algebra $D$ such that 
\[
p\in\Sigma_D\Longleftrightarrow\epsilon(1/2,\itPi_p,r,\psi_p)\omega_{E_p/\Q_p}(-1)=-1.
\]
Here $\Sigma_D$ is the ramification set of $D$.
We fix a such $D$ and the various isomorphisms and embeddings as in \cite[\S 5.2, \S 6.3]{CC2019} in the rest 
of this section. We then identify $D_v$ as a subalgebra of $D(E_v)$ via these embeddings. 

\subsection{Automorphic forms and raising elements} 
In this subsection, we introduce distinguished automorphic forms and raising elements appearing in
the global period integrals of the special value formulae. 
By our assumption,
the global Jacquet-Langlands lift $\itPi^D=\ot'_v\itPi^D_v$ of $\itPi$ 
to $D^{\x}(\A_E)$ is non-zero. Note that the central character of $\itPi^D$ is also trivial.

\subsubsection{An Eichler order}
We define an Eichler order $R_{\itPi^D}\subset\cO_{D(E)}$ by requiring it to satisfy the following
\[
R_{\itPi^D}\ot_{\cO_E}\cO_{E_p}\cong
\begin{cases}
{\rm M}_2(\frak{c}_{\pi'_p})\x\cO_{D_p}
&\quad\text{if $p\in\Sigma_{D}$ and $c_{E_p}=2$},\\
\cO_{D(E_p)}
&\quad\text{if $p\in\Sigma_{D}$ but $c_{E_p}\neq 2$},\\
{\rm M}_2(\frak{c}_{\itPi_p})
&\quad\text{if $p\notin\Sigma_{D}$}.
\end{cases}
\]
Here $\cO_{D(E_p)}$ is the maximal order of $D(E_p)$ and $\frak{c}_{\pi'_p}$ is the conductor of 
$\pi'_p$. Observe that when $p\in\Sigma_D$, one always has 
$D\ot_{\Q_p}F'_p\cong {\rm M}_2(F'_p)$, where $F'_p$ is any quadratic field extension over $\Q_p$. 
This justifies our choice of the order in the first case.

\subsubsection{Distinguished automorphic form: the real case} 
\label{SSS:distinguished automorphic form: the real case}
We fix a unique (up to constants) automorphic form $\bff^D\in\itPi^D$ as follows.
Note that $\itPi^D_\infty=\itPi_\infty=\pi_{1,\infty}\bt\pi_{2,\infty}\bt\pi_{3,\infty}$.
Let $k_j\geq 0$ be the minimal weight of $\pi_{j,\infty}$ for $j=1,2,3$. 
Then $\bff^D: D^{\x}(\A_E)\to\C$ is the automorphic form in $\itPi^D$ characterized by
\begin{align*}
\bff^D(z\gamma h u_\infty u)
=
\bff^D(h)e^{i(k_1\theta_1+k_2\theta_2+k_3\theta_3)}
\end{align*}
for $z\in\A^{\x}_E$, $\gamma\in D^{\x}(E)$, $u\in\wh{R}^{\x}_{\itPi^D}$ and 
$u_\infty=(k(\theta_1),k(\theta_2),k(\theta_3))\in{\rm SO}(2)\x{\rm SO}(2)\x{\rm SO}(2)$.

\subsubsection{Distinguished automorphic form: the complex case} 
\label{SSS:distinguished automorphic form: the complex case}
We fix a unique (up to constants) automorphic form $\bff^D\in\itPi^D$ as follows. 
Observe that $\itPi^D_\infty=\itPi_\infty=\pi'_\infty\bt\pi_\infty$. 
Let $k'$ (resp. $k$) be the minimal weight of $\pi'_\infty$ (resp. $\pi_\infty$). 
Suppose that $\pi'_\infty\cong\pi(\mu'_\infty,\nu'_\infty)$ and
$\pi_\infty\cong\pi(\mu_\infty,\nu_\infty)$ and let 
$\epsilon\in\stt{0,1}$ so that $\mu'_\infty\mu_\infty(-1)=(-1)^{k'+\epsilon}$. 
We have two cases (I) $k'>0$ or $\epsilon=0$ and (II) $k'=0$ and $\epsilon=1$, which 
are disjoint by Remark \ref{R:well-defined of m}. Put
\begin{equation}\label{E:l for CxR}
m_{\itPi_\infty}=m=
\frac{k'+k}{2}+\epsilon\,\frac{(1+(-1)^k)}{2}\quad\text{if $k'>0$ or $\epsilon=0$}.
\end{equation}
Let $\bff^D: D^{\x}(\A_E)\to\C$ be the automorphic form in $\itPi^D$ characterized by
\begin{itemize}
\item the ${\rm SU}(2)$ span of $\bff^D$ is isomorphic to ${\rm Sym}^{k'}(\C^{\oplus 2})$
(resp. ${\rm Sym}^{2}(\C^{\oplus 2})$) in the case (I) (resp. case (II)),
\item 
\[
\bff^D(z\gamma h u_\infty u)
=
\begin{cases}
\bff^D(h)e^{i((2m-k')\theta'+k\theta)}\quad&\text{case (I)},\\ 
\bff^D(h)\quad&\text{case (II)}.
\end{cases}
\]
\end{itemize}
Here $z\in\A^{\x}_E$, $\gamma\in D^{\x}(E)$, $u\in\wh{R}^{\x}_{\itPi^D}$ and 
$u_\infty=(k(\theta'),k(\theta))\in{\rm SU}(2)\x{\rm SO}(2)$. In other words, $\bff^D$ corresponds  
to the vector $v_{k',m}\in\cL_{k'}(\C)$ (resp. $v_{2,1}\in\cL_2(\C)$) in its complex place in case (I) 
(resp. case (II)).

\subsubsection{Raising element}\label{SSS:raising element}
Let
\begin{align}\label{E:raising element}
\begin{split}
\t{V}_+=\left(-\frac{1}{8\pi}\right)\cdot V_+
\quad\text{with}\quad
V_+
=
\begin{pmatrix} 1&0\\0&-1 \end{pmatrix}\otimes 1+
\begin{pmatrix} 0&1\\1&0 \end{pmatrix}\otimes \sqrt{-1}\in\frak{U}_\R
\end{split}
\end{align}
be the (normalized) weight raising element as in \cite[Lemma 5.6]{JLbook}. We use $\frak{U}_F$ to denote  
the universal enveloping algebra of ${\rm Lie}(\G(F))\ot_\R\C$ where $F=\R$ or $\C$. In any case,
we write ${\rm Id}$ for the identity element in $\frak{U}_F$.
We define the raising element $\bft=(\bft_v)_v$ as follows.

\begin{itemize}
\item
Suppose that we are in the real case. Let $k_1,k_2,k_3$ be as in 
\S\ref{SSS:distinguished automorphic form: the real case}, which are all even integers by our assumption on the central 
character. Re-indexing if necessary we may assume that $k_1={\rm max}\stt{k_1,k_2,k_3}$ and $\pi_{3,\infty}$
is a principal series representation. Suppose that $\pi_{1,\infty}$ is a discrete series representation.
We put $\ell=\frac{k_1-k_2-k_3}{2}\geq 0$. 
\[
\bft_\infty
=
({\rm Id}\ot{\rm Id}\ot\t{V}_+^{\ell},(J_2,I_2,I_2)).
\]
Suppose that $\pi_{1,\infty}, \pi_{2,\infty}$ are principal series representations, so that $\pi_{3,\infty}$ is also a principal 
series representation, and we have $k_1=k_2=k_3=0$. Let $\pi_{j,\infty}\cong\pi(\mu_{j,\infty},\nu_{j,\infty})$ for some 
characters $\mu_{j,\infty},\nu_{j,\infty}$ of $\R^\x$ for $j=1,2,3$. Set
\footnote{In the case where $k_1=k_2=k_3=0$ and $\mu_{1,\infty}\mu_{2,\infty}\mu_{3,\infty}(-1)=-1$, the local 
period at the real place and hence the global period integral vanishes if we do not use the raising 
elements, see \propref{P:local period integral for RxRxR}.
\[
\bft_\infty
=
\begin{cases}
({\rm Id}\ot{\rm Id}\ot{\rm Id}),(I_2,I_2,I_2))
\quad&\text{if $\mu_{1,\infty}\mu_{2,\infty}\mu_{3,\infty}(-1)=1$},\\ 
({\rm Id}\ot\t{V}_+\ot\t{V}_+),(I_2,I_2,J_2))
\quad&\text{if $\mu_{1,\infty}\mu_{2,\infty}\mu_{3,\infty}(-1)=-1$}.
\end{cases}
\]
}
\item
Suppose that we are in the complex case. 
Let $\pi'_\infty$, $k'$ and $\pi_\infty$, $k$ and $\epsilon$ be as in 
\S\ref{SSS:distinguished automorphic form: the complex case}. Define
\[
\bft_\infty
=
\begin{cases}
({\rm Id}\ot{\rm Id},(I_2,I_2))
\quad&\text{if $k'\geq 0$ is even and $\epsilon=0$},\\
({\rm Id}\ot\t{V}_+,(I_2,J_2))
\quad&\text{if $k'\geq 2$ is even and $\epsilon=1$},\\
({\rm Id}\ot{\rm Id},(I_2,I_2))
\quad&\text{if $k'=0$ and $\epsilon=1$}.
\end{cases}
\]

\item
Suppose that $v=p$ is a finite place. 
Let $\bft_p\in D^{\x}(E_p)$ be as in \cite[\S 2.3]{CC2019}.
\end{itemize}
\begin{Remark}\label{R:well-defined of m}
In the definitions of the integer $m$ in \eqref{E:l for CxR} and the raising elements $\bft_\infty$, we have used the signs
of the characters appearing in the various principal series representations. However, since 
$\pi(\mu,\nu)\cong\pi(\nu,\mu)$, we need to check that these definitions depend only on the isomorphism class of 
$\itPi_\infty$, i.e. independent of the order of the characters appearing in the various principal series representations.
These can be check easily from the fact that
\[
\mu\nu^{-1}(-1)=(-1)^k
\] 
where $k$ is the minimal weight of $\pi(\mu,\nu)$. For example, we check that $m$ is well-defined. For this we note that 
$k'\equiv k\pmod 2$ and 
\[
\mu'_\infty\mu_\infty(-1)
=
\nu'_\infty\nu_\infty(-1)
=
(-1)^{k'+\epsilon}
\quad\text{and}\quad
\mu'_\infty\nu_\infty(-1)
=
\nu'_\infty\mu_\infty(-1)
=
(-1)^{\epsilon}.
\]
In particular, $\epsilon$ and hence $m$ is well-defined if  $k'$ is even. On the other hand, if $k'$ is odd, then $\epsilon$
is not well-defined; however, we have $m=(k'+k)/2$, which is independent of $\epsilon$. 
\end{Remark}

\subsection{The formula}
Define $c_p=0$ if $\frak{c}_{\itPi_p}=\cO_{E_p}$, and $c_p=1$ if 
$\frak{c}_{\itPi_p}\subsetneq\cO_{E_p}$. Note that $c_p=0$ for almost all $p$.
Let $N:=\prod_{p\notin\Sigma_D} p^{c_p}$ and $R_N\subset\cO_D$ be the Eichler order of level $N$, and put
\begin{itemize}
\item 
$C(\itPi)
=
\prod_p
\left[\Z_p:p^{c_p}\Z_p\right]
\x\left[\cO_{E_p}:\frak{c}_{\itPi_p}\right]^{-1}$.
\end{itemize}
Suppose for the moment that $F$ is a number field and $B$ is a quaternion $F$-algebra. Let 
$R=F_1\x\cdots\x F_r$, where $F_j$ are finite field extensions over $F$. 
Put $B(R)=B\ot_F R=B(F_1)\x\cdots\x B(F_r)$. Then $B(F_j)=B\ot_F F_j$ is a quaternion $F_j$-algebra.
Define
\begin{itemize}
\item $C_B=\prod_{w\in\Sigma_B,\,w<\infty}(q_{F_w}-1)$,
\item $C_{B(R)}=\prod_{1\leq j\leq r}C_{B(F_j)}$.
\end{itemize}
We understand that $C_{{\rm M}_2}=1$.
Let $\nu(\itPi)$ be the number of finite places $p$ such that
\begin{itemize}
\item
$c_{E_p}=1$ and $\frak{c}_{\itPi_p}=\varpi_{E_P}\cO_{E_P}$, 
\item
$c_{E_p}=2$,  $F'_p/\Q_p$ is unramified and $\frak{c}_{\itPi_p}=(p,p)\cO_{E_p}$, 
\item
$c_{E_p}=2$, $F'_p/\Q_p$ is ramified and $\frak{c}_{\itPi_p}=(1,p)\cO_{E_p}$, 
\item
$c_{E_p}=3$ and $\frak{c}_{\itPi_p}=(p,p,p)\cO_{E_p}$.
\end{itemize}

\begin{thm}\label{T: special value formula}
{For notations as above, we have}
\begin{align*}
\frac{
\left|
\int_{\A^{\x}D^{\x}(\Q)\backslash D^{\x}(\A)}\itPi^D(\bft) \bff^D(h) dh^{{\rm Tam}}
\right|^2}
{\int_{\A_E^{\x} D^{\x}(E)\backslash D^{\x}(\A_E)}|\bff^D(h)|^2 dh^{{\rm Tam}}}
&=
2^{-c_E+\nu(\itPi)}C(\itPi)
\cdot
\frac{C_{D(E)}\left[\wh{\cO}_{D(E)}^{\x}:\wh{R}_{\itPi^D}^{\x}\right]}
{C^2_D\left[\wh{\cO}_D^{\x}:\wh{R}_{N}^{\x}\right]^2}
\cdot
I^*(\itPi_\infty,\bft_\infty)\\
&\x
\frac{\xi_E(2)}{\xi_\Q(2)^2}
\cdot
\frac{\Lambda(\frac{1}{2},\itPi,r)}{\Lambda(1,\itPi,{\rm Ad})}.
\end{align*}
Here $dh^{\rm Tam}$ are the Tamagawa measures on $ PD^{\x}$, 
and $I^*(\itPi_\infty,\bft_\infty)$ are defined in \S\ref{SS:Calculations of local period integrals: archimedean case} 
and computed in \propref{P:local period integral for RxRxR} and \propref{P:local period integral for CxR}.
\end{thm}

\begin{proof}
For each place $v$ of $\Q$, let $dh_v$ be the Haar measure on $F_v^{\x}\backslash D_v^{\x}$ given by
\cite[\S 4.1 and \S 5.1]{CC2019}.
By \cite[Lemma 6.1]{IchinoPrasanna}, we have $dh^{{\rm Tam}}=C^{-1}_D\xi_\Q(2)^{-1}\prod_vdh_v$.
By \thmref{T:Ichino formula} and Remark \ref{R:remark on Ichino's formula}, we find that
\begin{align*}
\frac{
\left|
\int_{\A^{\x}D^{\x}(\Q)\backslash D^{\x}(\A)}\itPi^D(\bft) f^D(h) dh^{{\rm Tam}}\right|^2}
{\int_{\A_E^{\x} D^{\x}(E)\backslash D^{\x}(\A_E)}|\bff^D(h)|^2 dh^{{\rm Tam}}}
=
2^{-c_E}C_D^{-1}
\cdot
\frac{\xi_E(2)}{\xi_\Q^2(2)}
\cdot
\frac{\Lambda(\frac{1}{2},\itPi,r)}{\Lambda(1,\itPi,{\rm Ad})}
\cdot
\prod_v I^*(\itPi^D_v,\bft_v)
\end{align*}
where $v$ runs over all places of $\Q$, and $I^*(\itPi^D_v,\bft_v)$ are essentially the local period integrals 
appearing in Ichino's formula whose definition can be found in 
\S\ref{SS:Calculations of local period integrals: archimedean case} and \cite[Section 2]{CC2019}. 
By 
\lmref{L:bi=her}, the definition of $I^*(\itPi^D_p,\bft_p)$ agrees
with the one given in \cite[Section 2]{CC2019}, so that we can apply the results in 
\cite[Sections 4, 5]{CC2019} to obtain
\[
\prod_p I^*(\itPi_p,\bft_p)
=
2^{\nu(\itPi)}C(\itPi)C^{-1}_D C_{D(E)}
\left[\wh{\cO}_{D(E)}^{\x}:\wh{R}_{\itPi^D}^{\x}\right]
\left[\wh{\cO}_D^{\x}:\wh{R}_{N}^{\x}\right]^{-2}
\]
where $p$ runs over all finite places. Finally, we note that $\itPi^D_\infty=\itPi_\infty$ since $D$ is indefinite by our 
assumption. This proves the theorem.
\end{proof}

The following corollary is a direct consequence of \thmref{T: special value formula}.

\begin{cor}\label{C:non-negativity of the central value}
The central values $\Lambda\left(1/2,\itPi,r\right)$ are non-negative real numbers.
\end{cor}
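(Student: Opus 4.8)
The plan is to read off the non-negativity directly from the special value formula \eqref{E:special value formula} in \thmref{T: special value formula}, rearranged so that $L(1/2,\itPi,r)$ is isolated. First I would observe that the left-hand side of \eqref{E:special value formula} is visibly a non-negative real number: the numerator is the square of the absolute value of a complex number, and the denominator $\cH(\bff^D,\bff^D)$ is a strictly positive real number because $\bff^D$ is a nonzero cusp form and the integrand is pointwise $\geq 0$ --- in the real case it is literally $|\bff^D(h)|^2$, while in the complex case it is $(\bff^D(h),\bff^D(h))_{k'}$, where $(\cdot,\cdot)_{k'}$ is the $\SU(2)$-invariant Hermitian form fixed by \eqref{E:hermitian pairing for symmetric power}. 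Here I would insert a short check that the sign convention in \eqref{E:hermitian pairing for symmetric power} really produces a positive-definite (not negative-definite) form on $\cL_{k'}(\C)$, since the whole corollary depends on this. It also follows from the formula that $L(1/2,\itPi,r)$ is real, without separately invoking self-duality of $r$.

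Next I would go through the right-hand side of \eqref{E:special value formula} and verify that every factor other than $L(1/2,\itPi,r)$ is a positive real number. This is mostly bookkeeping: the power $2^{-c_E+\nu(\itPi)}$ is positive; $C(\itPi)$ from \eqref{E:C(itPi)} is a finite product of ratios of positive lattice indices; the quantities $C_D$, $C_{D(E)}$ of \eqref{E:C_D} and \eqref{E:general C_D} and the subgroup indices $[\wh{\cO}_{D(E)}^{\x}:\wh{R}_{\itPi^D}^{\x}]$, $[\wh{\cO}_D^{\x}:\wh{R}_N^{\x}]$ are positive integers; and $\xi_E(2)$, $\xi_\Q(2)$ are positive real numbers because $2$ lies in the region of absolute convergence and the archimedean $\Gamma$-factors are positive there. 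The two inputs that are not pure bookkeeping are: (i) the archimedean local period $I^*_{\mathrm{her}}(\itPi_\infty,\bft_\infty)$ is \emph{strictly} positive, which I would read off from the explicit evaluations in \propref{P:local period integral for RxRxR} and \propref{P:local period integral for CxR} (strict positivity rather than mere non-vanishing is what matters, and is precisely what the normalization of the raising element $\bft_\infty$ in \S\ref{SSS:raising element} is arranged to guarantee); and (ii) $L(1,\itPi,{\rm Ad})>0$, which I would deduce from the factorization of the adjoint $L$-function of $\itPi$ into adjoint (equivalently symmetric-square) $L$-functions of $\GL_2$-type automorphic representations over $\Q$ or a quadratic/cubic field, each factor being positive at $s=1$ because $L(s,\sigma,{\rm Ad})=L(s,\sigma\x\wtd\sigma)/\zeta_F(s)$ and the Rankin--Selberg $L$-function $L(s,\sigma\x\wtd\sigma)$ has a simple pole at $s=1$ with positive residue, as does $\zeta_F(s)$.

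Granting these facts, I would conclude by solving \eqref{E:special value formula} for $L(1/2,\itPi,r)$: it equals the non-negative real number on the left-hand side, multiplied by $L(1,\itPi,{\rm Ad})\,\xi_\Q(2)^2$ and divided by the product of the remaining factors, all of which are positive; hence $L(1/2,\itPi,r)$ is a non-negative real number. I expect the only genuinely non-routine points to be the strict positivity of $I^*_{\mathrm{her}}(\itPi_\infty,\bft_\infty)$ (which we are simply importing from the local computations of \S\ref{S:local computation}) and the positive-definiteness of the Hermitian pairing in the complex case; everything else is a matter of unwinding definitions already in place.
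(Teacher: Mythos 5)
Your proposal is correct and coincides with the paper's own argument: the paper simply states that the corollary is a direct consequence of \thmref{T: special value formula}, and what you have written is exactly the unwinding of that implication — check that the left-hand side of \eqref{E:special value formula} is a nonnegative real, that every factor on the right other than $L(1/2,\itPi,r)$ is a positive real (including the strict positivity of $I^*_{\mathrm{her}}(\itPi_\infty,\bft_\infty)$ read off from \propref{P:local period integral for RxRxR} and \propref{P:local period integral for CxR}, the positive-definiteness of $(\cdot,\cdot)_{k'}$ from \eqref{E:hermitian pairing for symmetric power}, and $L(1,\itPi,{\rm Ad})>0$), and solve for $L(1/2,\itPi,r)$.
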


\section{Local calculation}\label{S:local computation}
The purpose of this section is to compute the local period integrals 
$I^*(\itPi_v,\bft_v)$ appeared in the proof of \thmref{T: special value formula}. 
Since our computations work for every local field, we drop
$v$ from our notations in this section and assume $F$ to be a local field.
We begin with some preparations.

\subsection{Explicit formulae of certain Whittaker functions}
In this subsection, we give explicit formulae of certain Whittaker functions which relates to the test vectors appearing  
in the local period integrals when $F=\R$ or $\C$. The proof of these formulae occurs in the appendix.
Some of the formulae were also obtained in \cite{Popa2008} and 
\cite{Woodbury2016} via different methods.
Let $\psi$ be the additive character of $F$ given by 
\begin{equation}\label{E:additive character of archimedean}
\psi(x)=
\begin{cases}
e^{2\pi i x}\quad&\text{if $F=\R$},\\
e^{2\pi i(x+\b{x})}\quad&\text{if $F=\C$}.
\end{cases}
\end{equation}
Let $\pi$ be an irreducible admissible $generic$ representation of $\G(F)$
with the minimal weight $k\geq 0$. 
For each integer $n$, let $\cV_\psi(\pi,n)$ denote the following spaces
\begin{equation}\label{E:K-invariant space}
\left(\cW(\pi,\psi)\ot\chi_{-n}\right)^{{\rm SO}(2)}
\quad\text{or}\quad
\left(\cW(\pi,\psi)\ot\rho_{n}\right)^{{\rm SU}(2)}
\end{equation}
depending on $F=\R$ or $F=\C$. Note that $n\geq 0$ when $F=\C$, and we have 
${\rm dim}_\C\cV_\psi(\pi,n)\leq 1$, which is equal to $1$ if and only if $|n|\geq k$ with $n\equiv k\pmod 2$.
Suppose that $0\neq W\in\cV_\psi(\pi,n)$. Then it's clear that $W$ is a weight $n$ element when $F=\R$.
On the other hand, if $F=\C$, then $W$ is a $\cL_n(\C)$-$valued$ function on $\G(\C)$ satisfying
\begin{equation}\label{E:SU(2)-equivalent}
W(gu)=\rho_n(u^{-1})W(g)
\end{equation} 
for every $g\in\G(\C)$ and $u\in{\rm SU}(2)$. Following \cite[Section 18]{JLbook2}, we refer to such $W$ as the 
Whittaker function of type $\rho_n$ attached to $\pi$. We would like to describe $W(g)$ for some $n$.
By the above observation and the Iwasawa decomposition, it suffices to consider $g=\pMX y001$
for $y\in F^{\x}$. We may assume that $\pi$ is isomorphic to a constituent of $\rho(\mu,\nu)$ for some $\mu,\nu$.
When $F=\C$, we may further assume that $\rho(\mu,\nu)$ is irreducible so that $\pi\cong\pi(\mu,\nu)\cong\pi(\nu,\mu)$ 
is a principal series representation and also that $\mu\nu^{-1}(e^{i\theta})=e^{ik\theta}$ by the uniqueness of the 
Whittaker model. Recall that $V_+\in\frak{U}_\R$ is the weight raising element given by \eqref{E:raising element}.

\begin{prop}\label{P:test vector in Whittaker model}
Let $s\in\C$ such that $\mu\nu^{-1}(y)=|y|_F^{2s}\left(y/|y|_F^{\frac{1}{d}}\right)^k$ where $d:=[F:\R]$.
\begin{itemize}
\item[(1)]
Suppose that $F=\R$ and $\pi$ is a (limit of) discrete series representation so that 
$\mu\nu^{-1}=|\cdot|^{\pm (k-1)}{\rm sgn}^k$. Let $\chi=\mu |\cdot|^{\pm\frac{1-k}{2}}$.
Then $\cV_\psi(\pi,k)$ cantains a non-zero generator $W_\pi$ which is  given by
\[
W_\pi\left(\pMX{y}{0}{0}{1}\right)
=
\chi(y) y^{\frac{k}{2}}e^{-2\pi y}\cdot\bbI_{\R^{\x}_+}(y).
\]
More generally, if $\ell\geq 0$ is an integer, then
$\rho(V^\ell_+)W_\pi$ is a non-zero generator of $\cV_\psi(\pi,k+2\ell)$ and is given by 
\[
\rho(V^\ell_+)W_\pi\left(\pMX y001\right)
=
\chi(y) 2^\ell\, P^{(\ell)}_{\pi}(y) e^{-2\pi y}\cdot \bbI_{\R^\x_+}(y)
\]
with
\[
P^{(\ell)}_{\pi}(y)
=
\sum_{j=0}^\ell (-4\pi)^j\begin{pmatrix} \ell\\j \end{pmatrix}
\frac{\Gamma(k+\ell)}{\Gamma(k+j)}\, y^{\frac{k}{2}+j}.
\]
\item[(2)]
Suppose that $F=\R$ and $\pi\cong\pi(\mu,\nu)$ is a principal series representation. 
Then $\cV_\psi(\pi,k)$ cantains a non-zero generator $W_\pi$ which is  given by
\[
W_{\pi}\left(\pMX y001\right)
=
\sum_{j=0}^k
\begin{pmatrix}
k\\ j
\end{pmatrix}
\mu(y)y^{k-j}|y|^{\frac{1-k}{2}-s+j}
K_{s+\frac{k}{2}-j}\left(2\pi |y|\right)
\]
If $k=0$, then $\rho(V_+)W_\pi$ is a non-zero generator of $\cV_\psi(\pi,2)$ and is given by
\begin{align*}
\begin{split}
\rho(V_+)W_\pi\left(\pMX y001\right)
&=
-2\pi\,\mu(y)|y|^{\frac{3}{2}-s}K_{s+1}(2\pi|y|)
-
4\pi\,\mu(y)|y|^{\frac{3}{2}-s}{\rm sgn}(y)K_s(2\pi|y|)\\
&+
\mu(y)|y|^{\frac{1}{2}-s}K_s(2\pi|y|)
-
2\pi\,\mu(y)|y|^{\frac{3}{2}-s}K_{s-1}(2\pi|y|).
\end{split}
\end{align*}
\item[(3)]
Suppose that $F=\C$. 
Then $\cV_\psi(\pi,k)$ cantains a non-zero generator $\vec{W}_\pi$ which is  given by
\[
\vec{W}_\pi(g)
=
\sum_{j=0}^{k}
\begin{pmatrix}
k\\j 
\end{pmatrix}
W_j(g) X^{j} Y^{k-j}
\]
with
\[
W_j\left(\pMX y001\right)
=
\left(\sqrt{-1}\right)^j
\mu(y)\b{y}^{j}
|y|_\C^{-\left(s-\frac{k}{4}+\frac{j}{2}-\frac{1}{2}\right)}
K_{2s-\frac{k}{2}+j}\left(4\pi |y|_\C^\frac{1}{2}\right)
\]
If $k=0$, then $\cV_\psi(\pi,2)$ cantains a non-zero generator $\vec{W}^{(2)}_\pi$ which is  given by
\[
\vec{W}^{(2)}_\pi(g)
=
W_0^{(2)}(g)X^2
+
W_1^{(2)}(g)XY
+
W_2^{(2)}(g)Y^2
\]
with
\begin{align*}
W_0^{(2)}\left(\pMX y001\right)
&=
(-\sqrt{-1})\mu(y)\b{y}|y|_\C^{\frac{1}{2}-s}K_{2s}\left(4\pi|y|_\C^{\frac{1}{2}}\right)\\
W_2^{(2)}\left(\pMX y001\right)
&=
(\sqrt{-1})\mu(y)y|y|_\C^{\frac{1}{2}-s}K_{2s}\left(4\pi|y|_\C^{\frac{1}{2}}\right)
\end{align*}
and 
\begin{align*}
W_1^{(2)}\left(\pMX y001\right)
&=
-\mu(y)|y|_\C^{1-s}K_{2s+1}\left(4\pi|y|_\C^{\frac{1}{2}}\right)
-
\mu(y)|y|_\C^{1-s}K_{2s-1}\left(4\pi|y|_\C^{\frac{1}{2}}\right)\\
&+
(2\pi)^{-1}\mu(y)|y|_\C^{\frac{1}{2}-s}K_{2s}\left(4\pi|y|_\C^{\frac{1}{2}}\right)
\end{align*}
\end{itemize}
\end{prop}

\begin{proof}
The first case is proved in \cite[Lemma 3.3]{CC2019}, while the others are proved in the appendix.
\end{proof}

\begin{Remark}\noindent
\begin{itemize}
\item[(1)]
By the isomorphism $W\mapsto W'$ defined in \S\ref{SS:Whittaker model}, 
there is no loss of generality for the assumptions on $\psi$.
\item[(2)]
Note that when $F=\R$, we have $W\in\cV_\psi(\pi,n)$ if and only if $\rho(J_2)W\in\cV_\psi(\pi,-n)$. In particular, 
\propref{P:test vector in Whittaker model} $(1)$ provides complete descriptions of elements in $\cW(\pi,\psi)$ when 
$\pi$ is a (limit of) discrete series representation. 
\end{itemize}
\end{Remark}

\subsection{Calculation of the norms}
\label{SS:Calculation of the norms}
In this subsection, we let $F=\R$ or $\C$. The aim is to compute the $L^2$-norm of various functions on 
$\G(F)$ appearing in the local period integrals. 
Fix $\psi$ to be the character of $F$ as in \eqref{E:additive character of archimedean}. 
Let $dx$ be the Haar measure on $F$ which is self-dual with respect to $\psi$. 
Let $d^{\x} x$ be the measure on $F^{\x}$ defined by $d^{\x}x=\zeta_F(1)|x|_F^{-1}dx$.
Let $\pi$ be a $unitary$ irreducible admissible generic representation of $\G(F)$ with the minimal weight $k$. 
We assume that $\pi$ is isomorphic to a constitute of $\rho(\mu,\nu)$ with $\mu\nu^{-1}|_{\R_+^{\x}}=|\cdot|^{2s}_F$ 
for some $s\in\C$. 
When $F=\C$, we may further assume that $\rho(\mu,\nu)$ is irreducible so that 
$\pi\cong\pi(\mu,\nu)$ is a principal series representation. 
Note that since $\omega_\pi=\mu\nu$ is unitary, we have
\begin{equation}\label{E:unitary character}
|\mu|^2=|\mu\nu^{-1}\cdot\omega_\pi|=|\cdot|_F^{2{\rm Re}(s)}.
\end{equation}

\begin{Remark}\label{R:unified formula}
Suppose that $\pi\cong\pi(\mu,\nu)$ is a unitary principal series representation of $\G(F)$ with 
$\mu\nu^{-1}|_{\R_+^{\x}}=|\cdot|^{2s}_F$ for some $s\in\C$. It is important to observe that we have either 
${\rm Re}(s)=0$ or $s\in\R$ and $k=0$. This observation helps us to give unified formulae whether $\pi$ is tempered 
or not.
\end{Remark}

\subsubsection{Norm of Whittaker functions}\label{SSS:Whittaker functions} 
Let $\cW(\pi,\psi)$ be the Whittaker model of $\pi$ with respect to $\psi$ and $\cH_\pi$ be the 
$\G(F)$-invariant hermitian pairing on $\cW(\pi,\psi)$ defined by \eqref{E:hermitian for Whittaker} with $\cK$ replaced 
by $F$. We would like to compute the $L^2$-norm of certain $W\in\cW(\pi,\psi)$ appearing in the local period integrals.

Suppose that $F=\C$. We may assume that $\mu\nu^{-1}(e^{i\theta})=e^{ik\theta}$ by the uniqueness of the 
Whittaker model. Let $n\geq k$ be an integer so that $n\equiv k\pmod 2$. If $\vec{W}$ is a non-zero generator of 
$\cV_\psi(\pi,n)$, where $\cV_\psi(\pi,n)$ is the ${\rm SU}(2)$-invariant space given by \eqref{E:K-invariant space}, 
then we define the $L^2$-norm of $\vec{W}$ by
\[
\|\vec{W}\|^2
=
\int_{\C^{\x}}
(\vec{W}\left(\pMX y001\right), \vec{W}\left(\pMX{y}{0}{0}{1}\right))_n d^{\x}y
\]
with $(\cdot,\cdot)_n$ the hermitian pairing on $\cL_n(\C)$ given by
\eqref{E:hermitian invariant pairing for symmetric power}. 
When $F=\R$, the $L^2$-norm of an element $W\in\cW(\pi,\psi)$ is the usual one $\|W\|^2:=\cH_\pi(W,W)$.

\begin{lm}\label{L:norm of Whittaker function for archimedean}
Let notations be as above and \propref{P:test vector in Whittaker model}. 
\begin{itemize}
\item[(1)]
Suppose that $F=\R$. We have
\[
\|W_\pi\|^2
=
\begin{cases}
2^{-2k}\pi^{-k}\Gamma(k)
&\text{if $\pi$ is a discrete series representation},\\
2^{-2}\pi^{-k}
\Gamma\left(s+\frac{k+1}{2}\right)
\Gamma\left(-s+\frac{k+1}{2}\right)
&\text{if $\pi$ is a principal series representation}.
\end{cases}
\]
\item[(2)]
Suppose that $F=\C$. We have
\begin{equation}\label{E:norm 1}
\|\vec{W}_\pi\|^2
=
2^{-k-3}\pi^{-k-2}
\Gamma\left(2s+k/2+1\right)
\Gamma\left(-2s+k/2+1\right).
\end{equation}
If $k=0$, then
\begin{equation}\label{E:norm 2}
\|\vec{W}^{(2)}_\pi\|^2=2^{-6}\pi^{-4}\Gamma(2s+2)\Gamma(-2s+2).
\end{equation}
Let $W\in\cW(\pi,\psi)$ be an element given by $W(h)=(W_\pi(h),v_{k,j})_k$, where 
$v_{k,j}\in\cL_k(\C)$ is the element defined by \eqref{E:another basis for symmetric power}. Then we have
\begin{equation}\label{E:norm 3}
\|W\|^2=\cH_\pi(W,W)
=
2^{-3}\pi^{-k-2}\frac{\Gamma(j+1)\Gamma(k+1-j)}{\Gamma(k+2)}
\Gamma\left(2s+k/2+1\right)
\Gamma\left(-2s+k/2+1\right).
\end{equation}
\end{itemize}
\end{lm}

\begin{proof}
Suppose that $F=\R$. If $\pi$ is a discrete series representation, then
$\|W_\pi\|^2=2^{-2k}\pi^{-k}\Gamma(k)$ follows immediately
from \propref{P:test vector in Whittaker model} $(1)$ and a simple computation. 
Suppose that $\pi$ is a principal series representation. Keeping the Remark \ref{R:unified formula} 
in mind,  we then use \propref{P:test vector in Whittaker model} $(2)$, \lmref{L:integration formula for Bessel function} $(3)$ together with \eqref{E:unitary character} to find that 
\begin{align*}
\|W_\pi\|^2
=
\sum_{j=0}^k\int_{\R^{\x}}
|y|^{k+1}
K_{s-\frac{k}{2}+j}\left(2\pi|y|\right)
K_{\b{s}-\frac{k}{2}+j}\left(2\pi|y|\right) 
d^{\x}y
=
2^{-2}\pi^{-k}
\Gamma\left(s+\frac{k+1}{2}\right)
\Gamma\left(-s+\frac{k+1}{2}\right).
\end{align*}
This shows $(1)$.

Suppose that $F=\C$. The calculation of the $L^2$-norm of a vector-valued Whittaker function can be reduced to 
that of a scalar-valued Whittaker function via the Schur's orthogonal relation.  
Indeed, if $0\neq\vec{W}\in\cV_\psi(\pi,n)$, $0\neq v\in\cL_n(\C)$ and $W(g):=(\vec{W}(g),v)_n$ for 
$g\in\G(\C)$ and if $du$ is the Haar measure on ${\rm SU}(2)$ such that ${\rm Vol}({\rm SU}(2),du)=1$, then we have
\begin{align}\label{E:Schur's orthogonal relation}
\begin{split}
\|W\|^2
&=
\int_{{\rm SU}(2)}\cH_\pi(\rho(u)W,\rho(u)W) du\\
&=
\int_{\C^{\x}}\int_{{\rm SU}(2)}
(\rho_n(u)v,\vec{W}\left(\pMX y001\right))_n
\ol{(\rho_n(u)v,\vec{W}\left(\pMX y001\right))_n}dud^{\x}y\\
&=
\frac{(v,v)_n}{n+1}\|\vec{W}\|^2.
\end{split}
\end{align}
From this we see that \eqref{E:norm 3} follows immediately from \eqref{E:norm 1} and \eqref{E:norm for v_n,j}.
Now we prove \eqref{E:norm 1}. By \eqref{E:Schur's orthogonal relation} with $v=Y^k$ and 
\propref{P:test vector in Whittaker model} $(3)$, we have $\|\vec{W}_\pi\|^2=(k+1)\|W_0\|^2$.
On the other hand, 
\begin{align*}
\|W_0\|^2
&=
\int_{\C^{\x}}
|y|_\C^{\frac{k}{2}+1}K_{2s-\frac{k}{2}}\left(4\pi|y|_\C^{\frac{1}{2}}\right)
K_{2\b{s}-\frac{k}{2}}\left(4\pi|y|_\C^{\frac{1}{2}}\right)d^{\x}y\\
&=
4\int_0^\infty r^{k+1}K_{2s-\frac{k}{2}}(4\pi r)K_{2\b{s}-\frac{k}{2}}(4\pi r) dr\\
&=
2^{-k-3}\pi^{-k-2}\frac{\Gamma(k+1)}{\Gamma(k+2)}
\Gamma\left(2s+\frac{k}{2}+1\right)\Gamma\left(-2s+\frac{k}{2}+1\right)
\end{align*}
by \eqref{E:unitary character} and \lmref{L:integration formula for Bessel function} $(3)$. This shows \eqref{E:norm 1}.
The norm $\|\vec{W}_\pi^{(2)}\|^2$ can be computed in a similar way. 
\end{proof}

\subsubsection{Norm of sections}
Suppose that $F=\R$ and $\pi\cong\pi(\mu,\nu)$ is a principal series representation.
If $n$ is an integer with $n\equiv k\pmod 2$, then we let 
$f^{(n)}_\pi\in\cB(\mu,\nu)$ be the element characterized by $f^{(n)}_\pi(k(\theta))=e^{in\theta}$.
When $n=k$, we simply denote $f_\pi=f_\pi^{(k)}$. By \cite[Lemma 5.6]{JLbook}, we have
\begin{equation}\label{E:raising element on section}
\rho\left(\t{V}_+^\ell\right)f_\pi^{(n)}
=
(-1)^\ell 2^{-2\ell} \pi^{-\ell}
\frac{\Gamma\left(s+\frac{n+1}{2}+\ell\right)}{\Gamma\left(s+\frac{n+1}{2}\right)}
f_\pi^{(n+2\ell)}
\end{equation}
for an integer $\ell\geq 0$.

\begin{lm}\label{L:norms for principal series for R}
Let notations be as above. We have
\[
\|f_\pi\|^2:=\cH_\pi(f_\pi,f_\pi)
=
\begin{cases}
1&\quad\text{if $\pi$ is tempered},\\
\pi^{2s}
\Gamma\left(-s+1/2\right)
\Gamma\left(s+1/2\right)^{-1}&\quad\text{if $\pi$ is complementary}.
\end{cases}
\]
Here $\cH_\pi$ is the hermitian pairing on $\cB(\mu,\nu)$ defined in 
\S\ref{SSS:An identity between invariant forms: hermitian}.
\end{lm}

\begin{proof}
Evidently we have $\|f_\pi\|^2=1$ if $\pi$ is tempered. Suppose that $\pi$ is
complementary. Then $s\in\R$ and $k=0$.
Let $f_\Phi\in\cB(\mu,\nu)$ be the Godement section attached to $\Phi$ defined in 
\subsubsecref{SSS:Godement section and intertwining operator}, where 
$\Phi(x,y)=e^{-\pi(x^2+y^2)}$. Then $f_\Phi$ is ${\rm SO}(2)$-invariant on the right
and one verifies that $f_\Phi(I_2)=\zeta_\R\left(2s+1\right)$.
It follows that $f_\pi=\zeta_\R\left(2s+1\right)^{-1}f_\Phi$.
Since $\wh{\Phi}=\Phi$ and $M_\psi^*(\mu,\nu)f_{\Phi}=\t{f}_{\wh{\Phi}}=\t{f}_\Phi$, we find that 
\[
M_\psi^*(\mu,\nu)f_\pi(I_2)
=
\zeta_\R\left(2s+1\right)^{-1}
\t{f}_\Phi(I_2)
=
\zeta_\R(-2s+1)\zeta_\R(2s+1)^{-1}.
\]
It follows that
\begin{align*}
\|f_\pi\|^2
=
\ol{M_\psi^*(\mu,\nu)f_\pi(I_2)}
=
\pi^{2s}
\Gamma(-s+1/2)
\Gamma(s+1/2)^{-1}.
\end{align*}
This completes the proof.
\end{proof} 

\subsection{Local trilinear and Rankin-Selberg integral}
In this subsection, we describe an identity between the local trilinear integral and a product of two Rankin-Selberg 
integrals. This generalizes the result of Michel and Venkatesh \cite[Lemma 3.4.2]{MV2010},
which reduces the calculations to the Rankin-Selberg integrals.

Let $\cK$ be an \etale quadratic $F$-algebra. When $\cK=F\x F$, we identify $F$ 
with a subfield of $\cK$ via the diagonal embedding. Let $z\mapsto\sigma(z)$ denote the non-trivial 
$F$-automorphism of $\cK$. Then $|z|_{\cK}=|z\cdot\sigma(z)|_F$.
Fix an element $\delta\in\cK^{\x}$ so that $\sigma(\delta)=-\delta$ and put  
$\Delta=\delta^2\in F^{\x}$. Fix a non-trivial additive character $\psi$ of $F$ and let $\psi_\cK$ 
be an additive character of $\cK$ defined by $\psi_\cK(z)=\psi(z+\sigma(z))$.
Put $E=\cK\x F$ and let $\itPi=\pi_\cK\bt\pi$ be a unitary irreducible admissible generic representation of $\G(E)$, 
where $\pi_\cK$ (resp. $\pi$) is a unitary irreducible admissible generic representation of $\G(\cK)$
(resp. $\G(F)$). 
We assume that 
\begin{itemize}
\item 
$\omega_\itPi$ is trivial on $F^{\x}$.
\item 
$\pi\cong\rho(\mu,\nu)$ is an irreducible induced representation;
\item $\Lambda(\itPi)<1/2$,
\end{itemize}
where $\Lambda(\itPi)$ is the non-negative real number associated to $\itPi$ defined in \cite[page 285]{Ichino2008}.

\begin{Remark}
Suppose that we merely assume $\pi$ to be a constituent of an induced representation $\rho(\mu,\nu)$. Then there exsit 
counter-examples such that \propref{P:regularization} fail. For example, one can consider the case where $F=\R$, 
$\cK=\R\x\R$ and $\pi_j$ are discrete series representations with the minimal weights $k_j$ with $k_1=2k$ and 
$k_2=k_3=k$  for some $k\geq 2$. In this setting, one can compute both sides of \eqref{E:regularization} by using minimal 
weight elements. The RHS can be computed easily. On the other hand, the LHS has been computed in 
\cite[Section 12]{IchinoIkeda2010} and \cite[Proposition 4.1]{CC2019}. One then checks that \propref{P:regularization} 
fails in this setting.
\end{Remark}

\subsubsection{An identity between invariant forms: bilinear case}
\label{SSS:An identity between invariant forms}
To describe the identity, we realize $\pi_\cK$ in its Whittaker model $\cW(\pi_\cK,\psi_\cK)$ with respect to 
$\psi_\cK$. On the other hand, by our assumption, we realize $\pi$ as $\cB(\mu,\nu)$.
Let $\cB_{\pi_\cK}$ be the $\G(\cK)$-equivariant bilinear pairing between 
$\cW(\pi_{\cK},\psi_{\cK})$ and $\cW(\t{\pi}_{\cK},\psi_{\cK})$ defined by 
\begin{equation}\label{E:bilinear pairing for Whittaker}
\cB_{\pi_\cK}(W,\t{W})
=
\int_{\cK^{\x}}
W\left(\pMX y001\right)\t{W}\left(\pMX{-y}{0}{0}{1}\right)d^{\x}y
\end{equation}
for $W\in\cW(\pi_\cK,\psi_\cK)$ and $\t{W}\in\cW(\t{\pi}_\cK,\psi_\cK)$. 
Let $\cB_{\pi}$ be the $\G(F)$-invariant bilinear pairing between 
$\cB(\mu,\nu)$ and $\cB(\mu^{-1},\nu^{-1})$ given by 
\begin{equation}\label{E:bilinear pairing for induced representation}
\cB_\pi(f,\t{f})
=
\int_K f(k)\t{f}(k)dk,
\end{equation}
for $f\in\cB(\mu,\nu)$ and $\t{f}\in\cB(\mu^{-1},\nu^{-1})$.
Define the local trilinear integral by 
\begin{equation}\label{E:trilinear local integral}
\sI(W\ot f;\t{W}\ot\t{f})
=
\int_{F^{\x}\backslash\G(F)}
\cB_{\pi_\cK}(\rho(g)W,\t{W})\cB_{\pi}(\rho(g)f,\t{f})dg.
\end{equation} 
Note that this is precisely the local integral defined in \S\ref{SSS:Ichino formula}.
On the other hand, we also define the local Rankin-Selberg integrals by
\begin{equation}\label{E:rankin-selberg integral}
\sR_\delta(W\ot f)
=
\int_{F^{\x}\N(F)\backslash\G(F)}
W\left(\pMX \delta001 g\right)
f(g)dg,
\end{equation}
and
\begin{equation}\label{E:rnakin-selberg integral for dual}
\t{\sR}_\delta(\t{W}\ot\t{f})
=
\int_{F^{\x}\N(F)\backslash\G(F)}
\t{W}\left(\begin{pmatrix}-\delta&0\\0&1\end{pmatrix}g\right)
\t{f}(g)dg.
\end{equation}
The Haar measures on various groups are chosen as follows:
\begin{itemize}
\item 
on $F$ (resp. $\cK$), we choose the self-dual measure $dx$ (resp. $dz$) with respect to 
$\psi$ (resp. $\psi_\cK$).
\item
on $F^{\x}$ (resp. $\cK^{\x}$), we take $d^{\x}x=\zeta_F(1)|x|_F^{-1}dx$ 
(resp. $d^{\x}z=\zeta_\cK(1)|z|^{-1}_\cK dz$). 
\item
we identify $\N(F)$ with $F$ so that the measure $dn$ on $\N(F)$ is also defined.
\item
the choices of the measures on $F$ and $F^{\x}$ uniquely determine a 
left Haar measure $db$ on $\B(F)$ given by
\[
db=|y|_F^{-1}d^{\x}zdxd^{\x}y
\quad\text{with}\quad
b=\pMX z00z\pMX 1x01\pMX y001\in\B(F).
\]
\item
we take any Haar measure $dk$ on $K$.
\item
the measure $dg$ on $\G(F)$ is given by $dg=dbdk$. Similarly the measure $dg$ on 
${\rm PGL}_2(F)$ is $dg=dbdk$, but with $db$ the quotient measure on $F^{\x}\backslash\B(F)$
induced from the measures on $F^{\x}$ and $\B(F)$. 
\item
the measure $dg$ on $F^{\x}\N(F)\backslash\G(F)$ is the quotient measure induced from the measures 
on ${\rm PGL}_2(F)$ and $\N(F)$.
\end{itemize}
  
It was shown by Ichino \cite[Lemma 2.1]{Ichino2008} that the integral 
\eqref{E:trilinear local integral} converges absolutely when $\Lambda(\itPi)<1/2$. 
This also holds for the local Rankin-Selberg integrals.

\begin{lm}\label{L:converge for rankin-selberg integral}
If $\Lambda(\itPi)<1/2$, then the integrals $\sR_\delta(W\ot f)$ and 
$\t{\sR}_\delta(\t{W}\ot\t{f})$ are absolutely convergent.
\end{lm}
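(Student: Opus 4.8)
The plan is to follow the proof of Ichino's convergence lemma \cite[Lemma~2.1]{Ichino2008}: reduce each integral to a one–variable integral by the Iwasawa decomposition, and then estimate it using the standard asymptotics of Whittaker functions.

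First I would rewrite $\sR_\delta(W\ot f)$ by means of the Iwasawa decomposition $\G(F)=\N(F)\,\{\diag{a,1}\mid a\in F^\times\}\,F^\times K$, where $F^\times$ is the centre and $K$ is as in \eqref{E:compact subgroup}. For $x\in F$ one has $\psi_\cK(\delta x)=\psi\bigl(\delta x+\sigma(\delta)\sigma(x)\bigr)=\psi(\delta x-\delta x)=1$, so $h\mapsto W\!\left(\pMX{\delta}{0}{0}{1}h\right)$ is left $\N(F)$–invariant; $f$ is left $\N(F)$–invariant as well, and the product transforms under the centre by $\omega_{\pi_\cK}\omega_\pi$, which is trivial on $F^\times$ by hypothesis, so the integrand descends to $F^\times\N(F)\backslash\G(F)$. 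Inserting $h=\diag{a,1}k$, using $f(\diag{a,1}k)=\mu(a)|a|_F^{1/2}f(k)$ and the quotient measure $|a|_F^{-1}\,d^\times a\,dk$, this gives
\[
\sR_\delta(W\ot f)=\int_K f(k)\left(\int_{F^\times}W(\diag{\delta a,1}k)\,\mu(a)\,|a|_F^{-1/2}\,d^\times a\right)dk,
\]
so it is enough to prove $\int_K\int_{F^\times}\bigl|W(\diag{\delta a,1}k)\bigr|\,|a|_F^{s_\mu-1/2}\,d^\times a\,dk<\infty$, where $s_\mu\in\R$ satisfies $|\mu(a)|=|a|_F^{s_\mu}$; since $\pi$ is a unitary principal series, $|s_\mu|$ equals its complementary–series exponent $s_\pi\in[0,1/2)$.

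The key input is the standard asymptotic (``gauge'') estimate for the Whittaker function of the unitary generic representation $\pi_\cK$: uniformly for $k$ in a fixed maximal compact subgroup of $\G(\cK)$, $|W(\diag{t,1}k)|$ vanishes for $|t|_\cK$ large when $F$ is nonarchimedean and is rapidly decreasing when $F$ is archimedean, while $|W(\diag{t,1}k)|\ll_\epsilon |t|_\cK^{\,1/2-s_\cK-\epsilon}$ as $|t|_\cK\to 0$, where $0\le s_\cK<1/2$ is the exponent of $\pi_\cK$ ($=0$ when $\pi_\cK$ is tempered); when $\cK=F\times F$ and $W=W_1\ot W_2$ this is the product of the one–variable estimates for $\pi_1$ and $\pi_2$. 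Uniformity in $k$ is immediate from $K$–finiteness of $W$ when $F$ is nonarchimedean and from the Casselman–Wallach theory when $F$ is archimedean. The part of the $a$–integral with $|a|_F\gg 1$ is then harmless, while for $|a|_F\to 0$ I substitute $|\delta a|_\cK=|\delta|_\cK\,|a|_F^{\,2}$ (note $\sigma|_F=\mathrm{id}$, hence $|a|_\cK=|a|_F^{\,2}$), so that after taking absolute values the integrand over $F^\times$ is $\ll|a|_F^{\,1/2-2s_\cK+s_\mu-2\epsilon}$ in the field case, and $\ll|a|_F^{\,1/2-s_1-s_2+s_\mu-2\epsilon}$ in the split case.

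Since $\int_{|a|_F\le1}|a|_F^{\,c}\,d^\times a<\infty$ precisely when $c>0$, for $\epsilon$ small this yields absolute convergence as soon as $2s_\cK-s_\mu<1/2$ (resp. $s_1+s_2-s_\mu<1/2$), and since $|s_\mu|\le s_\pi$ it is enough to have $2s_\cK+s_\pi<1/2$ (resp. $s_1+s_2+s_\pi<1/2$). This last inequality is exactly what $\Lambda(\itPi)<1/2$ asserts for $\itPi=\pi_\cK\bt\pi$ — indeed it is the inequality extracted in the proof of \cite[Lemma~2.1]{Ichino2008} for the (larger) trilinear integral — so $\sR_\delta(W\ot f)$ converges absolutely. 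The argument for $\t{\sR}_\delta(\t{W}\ot\t{f})$ is identical, with $\t{W}\in\cW(\t{\pi}_\cK,\psi_\cK)$ and $\t{f}\in\cB(\mu^{-1},\nu^{-1})$, using that $\t{\pi}_\cK$ and $\t{\pi}$ have the same exponents as $\pi_\cK$ and $\pi$, so that $\Lambda$ is unchanged. The main point requiring care is precisely this: establishing the uniform-in-$k$ Whittaker bound with the sharp exponent and checking that the resulting exponent inequality is exactly the one guaranteed by $\Lambda(\itPi)<1/2$ — but both ingredients are contained in the standard Kirillov–model analysis \cite{JLbook} and in \cite[Lemma~2.1]{Ichino2008}.
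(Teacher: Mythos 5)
Your proof is correct, and it follows the same route as the paper: Iwasawa decomposition of $\sR_\delta$ into an integral over $K$ and a $y$--integral over $F^\times$, followed by the standard Kirillov-model decay estimate for the Whittaker function (which appears in the paper later as \corref{C:estimate whittaker function}) to show the $y$--integral is $\ll\int|y|_F^{\,1/2-\Lambda(\itPi)-\epsilon}\Phi(\delta y)\,d^\times y<\infty$. The only difference is presentational: the paper simply asserts the majorant $|y|_F^{\,1/2-\Lambda(\itPi)-\epsilon}\Phi(\delta y)$ in one display and appeals to compactness of $K$, whereas you unpack the exponent bookkeeping ($|a|_\cK=|a|_F^2$, the split vs. field cases, and the reduction $|s_\mu|\le\lambda(\pi)$) and the uniformity in $k$; these are exactly the details the paper leaves to the reader.
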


\begin{proof}
By the Iwasawa decomposition, 
\begin{align}\label{E:decomposition of rankin-selberg integral}
\begin{split}
\sR_\delta(W\ot f)
&=
\int_{K}\int_{F^{\x}}W\left(\pMX{\delta y}{0}{0}{1}k\right)f\left(\pMX{y}{0}{0}{1}k\right)\frac{d^{\x}y}{|y|_F}dk\\
&=
\int_K f(k)\int_{F^{\x}}
W\left(\begin{pmatrix}\delta y&0\\0&1\end{pmatrix}k\right)
\mu(y)|y|_F^{-\frac{1}{2}}d^{\x}y dk.
\end{split}
\end{align}
By equation \eqref{E:decomposition of rankin-selberg integral} and the fact that $K$ is compact,
it suffices to show that the inner integral in \eqref{E:decomposition of rankin-selberg integral}
converges absolutely under the assumption $\Lambda(\itPi)<1/2$. Let $\epsilon>0$ so that 
$1/2-\Lambda(\itPi)-\epsilon>0$. From \eqref{E:estimate Whittaker function}, we know that
\[
W\left(\begin{pmatrix}\delta y&0\\0&1\end{pmatrix}k\right)\mu(y)|y|_F^{-\frac{1}{2}}
\ll_{\Lambda(\itPi),W,\delta,\epsilon} |y|_F^{\frac{1}{2}-\Lambda(\itPi)-\epsilon}\Phi(y),
\]
for some Bruhat-Schwartz function $\Phi$ on $F$.
This shows that the integral \eqref{E:decomposition of rankin-selberg integral} is absolutely
convergent. Similar argument applies to $\t{\sR}_\delta(\t{W}\ot\t{f})$. 
\end{proof}

The proof of the following proposition will be postponed to \S\ref{S:regularization}.

\begin{prop}\label{P:regularization} 
We have 
\begin{equation}\label{E:regularization}
\sI(W\ot f;\t{W}\ot\t{f})
=
|\Delta|^{-\frac{1}{2}}_F\cdot\frac{\zeta_{\cK}(1)}{\zeta_F(1)}\cdot
\sR_\delta(W\ot f)\cdot\t{\sR}_\delta(\t{W}\ot\t{f}).
\end{equation}
\end{prop}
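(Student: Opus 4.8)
The plan is to reduce \eqref{E:regularization} to the uniqueness of an invariant functional and then to pin down the resulting proportionality constant by an explicit unfolding. First note that, for fixed $\t{W},\t{f}$, the assignment $W\ot f\mapsto\sI(W\ot f;\t{W}\ot\t{f})$ is a $\G(F)$-invariant functional on $\pi_\cK\ot\pi$: replacing $(W,f)$ by $(\rho(h_0)W,\rho(h_0)f)$ in \eqref{E:trilinear local integral} and substituting $h\mapsto hh_0^{-1}$ leaves the value unchanged, a manipulation justified by the absolute convergence granted by $\Lambda(\itPi)<1/2$ \cite[Lemma 2.1]{Ichino2008}. The same substitution shows that $W\ot f\mapsto\sR_\delta(W\ot f)$ is $\G(F)$-invariant, and it converges by \lmref{L:converge for rankin-selberg integral}. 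Since $\pi$ is a (full) principal series, $\Hom_{\G(F)}(\pi_\cK\ot\pi,\C)$ is one-dimensional --- it is at most one-dimensional by \cite{Prasad1990}, \cite{Prasad1992}, \cite{Loke2001}, and nonzero because $\pi$ has no Jacquet--Langlands transfer to the quaternion division algebra, which rules out $\epsilon(\itPi)=-1$ --- and I would check that it is spanned by $\sR_\delta$, the local Rankin--Selberg integral representing the non-vanishing Asai $L$-factor \cite{Flicker1988}, \cite{JLbook2}. Hence $\sI(W\ot f;\t{W}\ot\t{f})=c(\t{W},\t{f})\,\sR_\delta(W\ot f)$ for a scalar $c(\t{W},\t{f})$; since $\sI$ is likewise invariant under the diagonal action in the variables $(\t{W},\t{f})$, the functional $c$ lies in $\Hom_{\G(F)}(\t{\pi}_\cK\ot\t{\pi},\C)$ (again one-dimensional, spanned by $\t{\sR}_\delta$), so $c(\t{W},\t{f})=c_0\,\t{\sR}_\delta(\t{W}\ot\t{f})$ with $c_0$ a constant depending only on $\cK/F$ and the Haar measures. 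Thus $\sI=c_0\,\sR_\delta\cdot\t{\sR}_\delta$, and everything reduces to showing $c_0=|\Delta|_F^{-1/2}\,\zeta_\cK(1)/\zeta_F(1)$.

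To evaluate $c_0$ (which simultaneously reproves the factorization) I would unfold \eqref{E:trilinear local integral} directly. Insert the torus integral over $\cK^\times$ that defines $\cB_{\pi_\cK}(\rho(h)W,\t{W})$ and the $K$-integral \eqref{E:bilinear pairing for induced representation} for $\cB_\pi(\rho(h)f,\t{f})$, use the $\G(F)$-invariance of $\cB_{\pi_\cK}$ to move the $K$-variable, and fibre the integral over $F^\times\backslash\G(F)$ along $\N(F)$ over $F^\times\N(F)\backslash\G(F)$. With torus variable $\pMX{a}{0}{0}{1}$, $a\in\cK^\times$, the left $(\N(\cK),\psi_\cK)$-equivariance of $W$ produces the factor $\psi_\cK(ax)=\psi\bigl(x\,\Tr_{\cK/F}(a)\bigr)$ along the unipotent, so the $\N(F)$-integration contributes $\int_F\psi\bigl(x\,\Tr_{\cK/F}(a)\bigr)\,dx$. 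Read as Fourier inversion on the two-dimensional $F$-space $\cK$ --- legitimate once one knows the relevant $\cK^\times$-integrand lies in $L^1(\cK)$, which is precisely the estimate underlying \lmref{L:converge for rankin-selberg integral} when $\Lambda(\itPi)<1/2$ --- this collapses the $\cK^\times$-integration onto the trace-zero line $F^\times\delta\subset\cK^\times$. Tracking the change of variables $t\mapsto t\delta$ from $F^\times$ onto $F^\times\delta$ relative to the self-dual additive measures on $F$ and $\cK$ (so that $|t\delta|_\cK=|t|_F^2|\Delta|_F$, $\Delta=\delta^2$, appears) together with $d^{\times}x=\zeta_F(1)|x|_F^{-1}dx$ and $d^{\times}z=\zeta_\cK(1)|z|_\cK^{-1}dz$ produces exactly the scalar $|\Delta|_F^{-1/2}\zeta_\cK(1)/\zeta_F(1)$. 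What remains --- after an Iwasawa decomposition as in \eqref{E:decomposition of rankin-selberg integral} and a substitution that decouples the residual $F^\times$-variable --- is visibly the product of \eqref{E:rankin-selberg integral} applied to $W\ot f$ and \eqref{E:rnakin-selberg integral for dual} applied to $\t{W}\ot\t{f}$, the sign $-\delta$ in the latter being exactly the normalization that makes this last substitution match.

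The real difficulty, and where the bulk of the work lies, is analytic rather than formal: the iterated integral produced by the unfolding is not absolutely convergent in the order used --- matrix coefficients do not decay along the unipotent direction --- so the passage through $\int_F\psi\bigl(x\,\Tr_{\cK/F}(a)\bigr)\,dx$ must be justified with care, for instance by inserting a factor $|\det|_F^{s}$, carrying out the unfolding for $\Re(s)\gg 0$ where every step converges absolutely, and continuing analytically to $s=0$; or by keeping the $x$-integration grouped with the $\cK^\times$-integration as a genuine partial Fourier transform and invoking the $L^1$-bound afforded by $\Lambda(\itPi)<1/2$. In the archimedean case $F=\R$ one must in addition control the precise decay and near-identity asymptotics of the Whittaker functions of $\G(\R)$ and $\G(\C)$ --- essentially the asymptotics of $K$-Bessel functions --- and verify that each interchange of integration above is licit. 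The remaining ingredients (the Jacobian bookkeeping for the measures, the Iwasawa decomposition, the sign matching) are routine.
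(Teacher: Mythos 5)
Your opening reduction via multiplicity one is legitimate --- $\Hom_{\G(F)}(\pi_\cK\bt\pi,\C)$ is one-dimensional here, and $\sR_\delta,\t{\sR}_\delta$ do span the relevant invariant spaces --- and it correctly isolates the task of pinning down a single scalar. The unfolding heuristic in your second paragraph is also the right picture: the $\N(F)$-integration should collapse the $\cK^\times$-integral onto the trace-zero line $F^\times\delta$, and the Jacobian/Tate-measure bookkeeping does produce $|\Delta|_F^{-1/2}\zeta_\cK(1)/\zeta_F(1)$. So the skeleton is sound, and you have correctly identified exactly where the difficulty lies. But neither of your two proposed repairs to the convergence problem actually works, and the hard part of the proposition is precisely this repair.

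Repair (a) --- inserting $|\det|_F^s$ --- is ill-posed on $F^\times\backslash\G(F)$: $|\det(zh)|^s=|z|^{2s}|\det(h)|^s$, so $|\det|_F^s$ is not a function on the quotient and the proposed regularized integral is not well-defined without also deforming the representations, which changes the object being computed. Repair (b) --- keeping the $x$-integral glued to the $\cK^\times$-integral as ``a genuine partial Fourier transform'' justified by an $L^1$-bound --- is exactly the na\"ive interchange that fails. The inner $\cK^\times$-integral converges absolutely for each fixed $(x,k)$ and yields the matrix coefficient $\cB_{\pi_\cK}(\rho(n(x)a(y)k)W,\t W)$, but that matrix coefficient has only polynomial decay in $x$, so the joint integral over $\N(F)\times\cK^\times$ is \emph{not} absolutely convergent; the ``Fourier inversion on $\cK$'' is a distributional identity here, and there is no dominating $L^1$ function that licenses it. The paper makes this point explicitly (Remark after \lmref{L:main lemma for regularization}): the truncated Dirichlet kernel $\sin(r_n\alpha)/\alpha$ does not converge in $L^1$, and the resulting estimates pick up an uncontrolled factor $n^{1/2}$ if one tries to dominate crudely. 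What the paper actually does is the following. It introduces the compactly supported bi-$\SO(2)$-invariant cutoffs $\phi_n$ of \eqref{E:phi_n} on $\R^\times\backslash\G(\R)$, writes $\sI_n(W\ot f;\t W\ot\t f)=\int\cB_{\pi_\cK}\cB_\pi\,\phi_n\,dh$, and observes that $\sI_n\to\sI$ by dominated convergence (both $\sI_n$ and $\sI$ are absolutely convergent). Only \emph{then} does it expand $\cB_{\pi_\cK}$ as a $\cK^\times$-integral: the cutoff converts the $x$-integral into a truncated Dirichlet kernel $\sin(c\,r_n\alpha)/(\pi\alpha)$, and \lmref{L:main lemma for regularization} shows, by a three-term split $I_n=I_n^{(1)}-I_n^{(2)}+I_n^{(3)}$ at a scale $\xi$, integration by parts, uniform boundedness of $\int_a^b\sin x/x\,dx$ (\lmref{L:uniform boundedness of sin}), and quantitative bounds on archimedean Whittaker functions and their derivatives (\corref{C:estimate whittaker function}, \corref{C:estimate differentiation of whittaker function for complex}, \corref{C:estimate differetiation of the whittaker function for real}), that $I_n$ converges uniformly to $\frac{\zeta_\cK(1)}{\zeta_\R(1)}R_\delta(k_1,k_2)$. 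Your multiplicity-one reduction is an alternative framing (closer in spirit to Michel--Venkatesh than to the Hsieh--Ishikawa route the paper follows), but it does not spare you any of this analysis: the constant $c_0$ would still have to be extracted from exactly the same limit.
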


\begin{Remark}
Under certain temperedness assumption, Michel and Venkatesh proved \propref{P:regularization} by 
using the Whittaker Plancherel theorem for the case $\cK=F\x F$.  
In \cite[proposition 5.1]{ML2017}, Hsieh gave a different but more elementary proof for the case 
$\cK=F\x F$ and $F$ is non-archimedean. 
Moreover, he replaced the temperedness assumption
by a much weaker hypothesis $\Lambda(\itPi)<1/2$. Following the method in \cite{ML2017},  
I. Ishikawa in his thesis \cite[Theorem 5.1]{Ishikawa2017} proved \propref{P:regularization} 
when $\cK$ is a field and $F$ is non-archimedean. 
In this article, we prove \propref{P:regularization} when $F$ is archimedean 
in \S\ref{S:regularization} following the idea in \cite{ML2017}. 
\end{Remark}

\subsubsection{Godement section and intertwining map}
\label{SSS:Godement section and intertwining operator}
Let $\cS(F^2)$ be the space of Bruhat-Schwartz functions on $F^2$. 
Let $\cS(F^2,\psi)=\cS(F^2)$ when $F$ is non-archimedean and put
\begin{equation}\label{E:algebraic Schwartz space}
\cS(\C^2,\psi)=\C[x_1,\b{x}_1,x_2,\b{x}_2]e^{{-2\pi}|a\b{a}|^{1/2}_\C(x_1\b{x}_1+x_2\b{x}_2)}
\quad\text{and}\quad
\cS(\R^2,\psi)=\C[x_1,x_2]e^{-\pi|a|(x_1^2+x_2^2)}.
\end{equation}
Here $a$ is the non-zero number such that $\psi(x)=e^{2\pi i a x}$ when
$F=\R$ and $\psi(x)=e^{2\pi i (ax+\b{a}\b{x})}$ when $F=\C$.
Define the Fourier transform $\wh{\Phi}$ of $\Phi\in\cS(F^2)$ by
\[
\wh{\Phi}(x_1,x_2)
=
\int_{F}\int_{F}
\Phi(u,v)\psi(ux_2-vx_1)dudv,
\]
where $du, dv$ are the Haar measures on $F$ which are self-dual with respect to $\psi$.
Observe that $\cS(F^2,\psi)$ is invariant under the Fourier transform.
The following facts can be found in \cite{JLbook} and 
\cite[Section 4]{GelbartJacquet1979}.
Let $\mu,\nu$ be two characters of $F^\x$.
Let $s\in\C$ , $\Phi\in\cS(F^2,\psi)$ and $h\in\G(F)$. Define the $Godement$ $sections$
\begin{align*}
f^{(s)}_\Phi(g)
=
\mu(g)|g|_F^s
\int_{F^{\x}}\Phi((0,t)g)\mu\nu^{-1}(t)|t|^{2s}_F d^{\x}t
\quad\text{and}\quad
\t{f}^{(s)}_\Phi(g)
=
\nu(g)|g|_F^s
\int_{F^{\x}}\Phi((0,t)g)\mu^{-1}\nu(t)|t|^{2s}_F d^{\x}t.
\end{align*}
These two integrals converge absolutely when ${\rm Re}(s)\gg 0$ and have
meromorphic continuations to the whole complex plane. Moreover, whenever they are defined, we have
\[
f^{(s)}_\Phi\in\cB(\mu |\cdot|_F^{s-1/2},\nu |\cdot|_F^{1/2-s})
\quad\text{and}\quad
\t{f}^{(s)}_\Phi\in\cB(\nu |\cdot|_F^{s-1/2},\mu |\cdot|_F^{1/2-s}).
\]
Now suppose that $\rho(\mu,\nu)$ is $irreducible$. Then $f^{(s)}_\Phi$ and $\t{f}^{(s)}_\Phi$ are defined at $s=1/2$. 
We put 
\[
f_\Phi:=f^{(s)}_\Phi|_{s=1/2}
\quad\text{and}\quad
\t{f}_\Phi:=\t{f}^{(s)}_\Phi|_{s=1/2}.
\]
The linear maps $\Phi\mapsto f_\Phi$ and $\Phi\mapsto \t{f}_\Phi$ are surjective. 
Define the normalized intertwining operator 
\[
M_\psi^*(\mu,\nu,s)f^{(s)}_\Phi(g)
=
\gamma(2s-1,\mu\nu^{-1},\psi)
\int_F 
f^{(s)}_\Phi\left(
\pMX{0}{-1}{1}{0}\pMX 1x01 g
\right)dx.
\]
Here $dx$ is the Haar measure on $F$ which is self-dual with respect to $\psi$ and 
$\gamma(s,\mu\nu^{-1},\psi)$ is the Tate $\gamma$-factor attached to $\mu\nu^{-1}$ and $\psi$. 
This integral converges absolutely when ${\rm Re}(s)\gg 0$ and admits a meromorphic continuation 
to the whole complex plane. 
Moreover, we have $M_\psi^*(\mu,\nu,s)f^{(s)}_\Phi=\t{f}^{(1-s)}_{\wh{\Phi}}$
in the sense of the meromorphic continuations. We define the $\G(F)$-isomorphism
\begin{equation}\label{E:normalized intertwining map}
M^*_\psi(\mu,\nu):\cB(\mu,\nu)\stackrel{\sim}\longto\cB(\nu,\mu);
\quad
f_\Phi\mapsto\t{f}_{\wh{\Phi}}.
\end{equation}
Notice that $M^*_\psi(\mu,\nu)$ is well-defined (i.e. independent of the choice of $\Phi$).

\subsubsection{An identity between invariant forms: hermitian case}
\label{SSS:An identity between invariant forms: hermitian}
We need a variation of \propref{P:regularization} in terms of the $hermitian$ $pairings$. 
Define the $\G(\cK)$-equivariant hermitian pairing $\cH_{\pi_\cK}$ on $\cW(\pi_\cK,\psi_\cK)$ by
\begin{equation}\label{E:hermitian for Whittaker}
\cH_{\pi_\cK}(W,W')
=
\int_{\cK^{\x}}
W\left(\pMX z001\right)\ol{W'\left(\pMX{z}{0}{0}{1}\right)}d^{\x}z.
\end{equation}
On the other hand, the hermitian pairing $\cH_\pi$ on $\cB(\mu,\nu)$ is defined as follows
\begin{itemize}
\item
if $\pi$ is tempered, then 
\[
\cH_\pi(f,f')=\int_{K}f(k)\ol{f'(k)}dk;
\]
\item
if $\pi$ is complementary, then 
\[
\cH_{\pi}(f,f')=\int_K f(k)\ol{M_\psi^*(\mu,\nu)f'(k)}dk.
\] 
\end{itemize}
Define local trilinear integral in terms of the hermitian pairings by 
\[
\sI_{{\rm her}}(W\ot f;\t{W}\ot\t{f})
=
\int_{F^{\x}\backslash\G(F)}
\cH_{\pi_\cK}(\rho(g)W,W')\cH_{\pi}(\rho(g)f,f')dg.
\]

We introduce the local factors which will appear in the hermitian case.

\begin{defn}
Let ${\rm As}\,\pi_\cK$ be either the Asai transfer of $\pi_\cK$ to ${\rm GL}_4(F)$ as in
\cite{Krishnamurthy2003} or the Rankin-Selberg product as in \cite{JLbook2}, according to whether $\cK$ is a field or not. 
Then ${\rm As}\,\pi_\cK$ is an irreducible admissible generic representation of ${\rm GL}_4(F)$. 
Denote by $L(s,{\rm As}\,\pi_\cK\ot\mu)$ and $\epsilon(s,{\rm As}\,\pi_\cK\ot\mu,\psi)$ the 
$L$-factor and the $\epsilon$-factor attached to ${\rm As}\,\pi_\cK\ot\mu$ and $\psi$ as in
\cite[(3.6.4)]{Tate1979}.
The gamma factor is then given by
\[
\gamma\left(s,{\rm As}\,\pi_\cK\ot\mu,\psi\right)
=
\epsilon\left(s,{\rm As}\,\pi_\cK\ot\mu,\psi\right)
\frac{L\left(1-s,{\rm As}\,\t{\pi}_\cK\ot\mu^{-1}\right)}
{L\left(s,{\rm As}\,\pi_\cK\ot\mu\right)}.
\]
\end{defn}

The following corollary is the key of our computations for the local period integrals.  

\begin{cor}\label{C:regularization for hermitian}
Notation be as above.
Let $\lambda_{\cK/F}(\psi)$ be the Langlands constant given by \cite[Lemma 1.2]{JLbook}.
We have
\[
\sI_{{\rm her}}(W\ot f; W'\ot f')
=
C\cdot|\Delta|^{-\frac{1}{2}}_F\cdot\frac{\zeta_{\cK}(1)}{\zeta_F(1)}\cdot
\sR_\delta(W\ot f)\cdot\ol{\sR_\delta(W'\ot f')}
\]
where $C=1$ if $\pi$ is tempered, and 
$C=\omega_{\pi_\cK}(\delta)^{-1}\lambda_{\cK/F}(\psi)
\ol{\mu(\Delta)\gamma\left(1/2,{\rm As}\,\pi_\cK\ot\mu,\psi\right)}$
if $\pi$ is complementary.
\end{cor}

\begin{proof}
For $W\in\cW(\pi_{\cK},\psi_\cK)$, define $\t{W}(g)=\ol{W(J_2g)}$. Since  $\pi_\cK$ is unitary,
we have $\t{W}\in\cW(\t{\pi}_\cK,\psi_\cK)$ by the uniqueness of the Whittaker model. 
Let $W, W'\in\cW(\pi_\cK,\psi_\cK)$. Then we have $\cH_{\pi_\cK}(W,W')=\cB_{\pi_\cK}(W,\t{W}')$.
Similarly for $f\in\cB(\mu,\nu)$, define $\t{f}=\ol{f}$ if $\pi$ is tempered, and 
$\t{f}=\ol{M^*_\psi(\mu,\nu)f}$ if $\pi$ is complementary. Then $\t{f}\in\cB(\mu^{-1},\nu^{-1})$ and
we have $\cH_\pi(f,f')=\cB_{\pi}(f,\t{f}')$ for $f,f'\in\cB(\mu,\nu)$.
By these observations and \propref{P:regularization}, we find that
\[
\sI_{{\rm her}}(W\ot f; W'\ot f')
=
|\Delta|^{-\frac{1}{2}}_F\cdot\frac{\zeta_{\cK}(1)}{\zeta_F(1)}\cdot
\sR_\delta(W\ot f)\cdot\t{\sR}_\delta(\t{W}'\ot \t{f}').
\]
One checks easily that $\t{\sR}_\delta(\t{W}'\ot\t{f}')=\ol{\sR_\delta(W'\ot f')}$ when $\pi$ is 
tempered. Suppose that $\pi$ is complementary. We need to show that 
\[
\t{\sR}_\delta(\t{W}'\ot\t{f}')=C\cdot\ol{\sR_\delta(W'\ot f')}.
\] 
For this, we borrow the idea in the proof of \cite[Corollary 5.2]{ML2017}. Let $\psi^\delta_\cK$ be the 
character of $\cK$ defined by $\psi^\delta_\cK(z)=\psi_\cK(\delta z)$. Note that $\psi^\delta_\cK$ is
trivial on $F$. For $W\in\cW(\pi_\cK,\psi_\cK)$, define $W_\delta\in\cW(\pi_\cK,\psi^\delta_\cK)$ by
\[
W_\delta(g)=W\left(\pMX{\delta}{0}{0}{1}g\right)\quad (g\in\G(\cK)).
\]
Let $s\in\C$, $W\in\cW(\pi_\cK,\psi_\cK)$ and $\Phi\in\cS(F^2)$. Write $Z(s,W_\delta,\Phi)$ for the 
zeta integral associated to $W_\delta$ and $\Phi$ defined by \cite[(2.2.1)]{CCI2020}. 
This is the local zeta integral associated to the twisted tensor product $L$-function 
introduced in \cite{Flicker1988} (see also \cite{Kable2004}).
Pick $\Phi'\in\cS(F^2,\psi)$ so that $f'=f^{(s)}_{\Phi'}|_{s=1/2}$, where $f^{(s)}_{\Phi'}$ is the
Godement section defined in \S\ref{SSS:Godement section and intertwining operator}. Let $\mu'$ be 
a character of $\cK^{\x}$ such that $\mu'(y)=\mu(y)$ for every $y\in F^{\x}$. One verifies that 
\[
\sR_\delta(W'\ot f')
=
Z(s,W'_\delta\ot\mu',\Phi')|_{s=1/2}
\quad\text{and}\quad
\sR_\delta(\t{W}'\ot\t{f}')
=
\ol{Z(1-s,W_\delta'\ot\mu'^{-1}\omega_\cK^{-1},\wh{\Phi}')}|_{s=1/2}.
\]
The assertion for complementary $\pi$ follows from the functional equation of 
the zeta integrals \cite[Appendix]{Flicker1993}, \cite[Proposition 2]{Kable2004} and 
\cite[Theorem A and (2.1.2)]{CCI2020} (see also \cite{AKMSS2019}, \cite{Beuzart-Plessis2019}). 
\end{proof}

\subsection{Calculation of the local period integrals}
\label{SS:Calculations of local period integrals: archimedean case}
After the preparations in the previous subsections, we could begin our computations for the local period integrals 
$I^*(\itPi_v,\bft_v)$ appearing in the proof of \thmref{T: special value formula}.
As mentioned in the introduction, we will only focus on the computations for the real place in the followings. 
Indeed, by our assumptions \S\ref{SSS:global assumption}, the local period integrals for the non-archimedean case were 
essentially computed in \cite[Sections 4,5]{CC2019}; however, we should emphasize that our computations for the real 
place here are complementary to that of in loc. cit.

Let us explain why we can apply the results in \cite[Sections 4,5]{CC2019}. Firstly, our definitions for the local period 
integrals over non-archimedean local fields can be found in \cite[Section 2]{CC2019}, but now with the bilinear pairing 
replaced by the hermitian pairing. Secondly, the local period integrals in terms of the bilinear pairings were computed in 
\cite[Sections 4,5]{CC2019}. Finally, the results remain the same if one takes the following lemma into account.

\begin{lm}\label{L:bi=her}
Let $F$ be a non-archimedean local field and $\pi$ be a unitary irreducible admissible generic 
representation of $\G(F)$. Let $\cB_\pi$ (resp. $\cH_\pi$) be the $\G(F)$-equivariant bilinear 
 (resp. hermitian) pairing between $\pi$ and $\t{\pi}$ (resp. on $\pi$). Let $\phi_\pi\in\pi$  and 
 $\phi_{\t{\pi}}\in\t{\pi}$ be non-zero newforms and $t\in\G(F)$. 
 Then $\cB_\pi(\phi_\pi,\phi_{\t{\pi}})\neq 0$.
 Put
 \[
 \Phi_{{\rm bi}}(g;t)
 =
 \frac{\cB_\pi(\pi(gt)\phi_\pi,\t{\pi}(t)\phi_{\t{\pi}})}{\cB_\pi(\phi_\pi,\phi_{\t{\pi}})}
 \quad\text{and}\quad
 \Phi_{{\rm her}}(g;t)
 =
 \frac{\cH_\pi(\pi(gt)\phi_\pi,\pi(t)\phi_{\pi})}{\cH_\pi(\phi_\pi,\phi_{\pi})} .
 \]
 Then we have $\Phi_{{\rm bi}}(g;t)=\Phi_{{\rm her}}(g;t)$ for every $g\in\G(F)$.
\end{lm}

\begin{proof}
By the equivariant property, it suffices to prove that there is a non-zero constant $c$ such that
\[
\cB_\pi(\pi(g)\phi_\pi,\phi_{\t{\pi}})=c\cH_\pi(\pi(g)\phi_\pi,\phi_\pi)
\]
for every $g\in\G(F)$. Since $\pi$ is unitary, we have $\t{\pi}\cong\b{\pi}$, where $\b{\pi}$ is the  
complex conjugate representation of $\pi$. Now the lemma follows from the uniqueness of the $\G(F)$-equivariant
pairing and the theory of newforms.
\end{proof}

Let $F=\R$ from now on, so that either $E=\R\x\R\x\R$ or $E=\C\x\R$. 
Fix $\psi$ (resp. $\psi'$) to be the character of $\R$ (resp. $\C$) given by $\psi(x)=e^{2\pi i x}$ (resp. 
$\psi'(x)=e^{2\pi i(x+\b{x})}$) and $\delta$ in \corref{C:regularization for hermitian} to be
\begin{equation}\label{E:delta for R}
\delta
=
\begin{cases}
(-1,1)\quad&\text{if $E=\R\x\R\x\R$},\\
\sqrt{-1}\quad&\text{if $E=\C\x\R$}.
\end{cases}
\end{equation}
The Haar measures on $\G(\R)$ and $\R^\x\backslash\G(\R)$ are those described in 
\S\ref{SSS:An identity between invariant forms} with ${\rm Vol}({\rm SO}(2),dk)=1$.

Let $\itPi$ be a unitary irreducible admissible generic representations of 
$\G(E)$, whose central character of $\itPi$ is trivial on $\R^\x$ and $\Lambda(\itPi)<1/2$.
We first define the local period integrals $I^*(\itPi,\bft)$ to be computed.  
We will consider slightly general situations comparing with our global settings. 

\subsubsection{Setup for real case}
Suppose that $E=\R\x\R\x\R$ so that $\itPi=\pi_1\bt\pi_2\bt\pi_3$, where $\pi_1, \pi_2$ and $\pi_3$ are unitary
irreducible admissible generic representations of $\G(\R)$ with the minimal weights $k_1, k_2$ and $k_3$, respectively.
We make the following assumption.
\begin{itemize}
\item
One of $\pi_1,\pi_2, \pi_3$ is an irreducible induced representation.
\end{itemize}
Re-index if necessary we may assume that $k_1={\rm max}\stt{k_1, k_2, k_3}$ and 
$\pi_3\cong\rho(\mu_3,\nu_3)$ is an irreducible induced representation. 
This can be done since $k_1+k_2+k_3$ is even by our assumption on the central character. 
Note that this implies that $\pi_3$ is either a principal series representation or a limit of discrete series representation.
We may further assume that $\pi_j$ is isomorphic to a constituent of $\rho(\mu_j,\nu_j)$ for $j=1,2$, and  
$\mu_j\nu_j^{-1}=|\cdot|^{2s_j}{\rm sgn}^{k_j}$ for some $s_j\in\C$ for $j=1,2,3$.

\subsubsection{Setup for complex case}
Suppose that $E=\C\x\R$ so that $\itPi=\pi'\bt\pi$ where $\pi'$ (reps. $\pi$) is a unitary irreducible admissible generic
representation of $\G(\C)$ (resp. $\G(\R)$) with the minimal weight $k'$ (resp. $k$). We make the following assumption.
\begin{itemize}
\item
$\pi\cong\rho(\mu,\nu)$ is an irreducible induced representation.
\end{itemize}
Again, this implies that $\pi$ is either a principal series representation of a limit of discrete series representation.
We may assume that $\pi'\cong\pi(\mu',\nu')$ is a principal series representation. Let $s'\in\C$ (resp. $s\in\C$) such that 
$\mu'\nu'^{-1}(y)=|y|_\C^{2s'}\left(y/|y|_\C^{\frac{1}{2}}\right)^{\pm k'}$ (resp. $\mu\nu^{-1}=|\cdot|^{2s}{\rm sgn}^k$),
and $\epsilon\in\stt{0,1}$ such that $\mu'\mu(-1)=(-1)^{k'+\epsilon}$. 

\subsubsection{Test vectors}
We realize the representation space of $\itPi$ to be
\[
\cW(\pi_1,\psi)\ot\cW(\pi_2,\psi)\ot\cB(\mu_3,\nu_3)
\quad\text{or}\quad
\cW(\pi',\psi')\ot\cB(\mu,\nu)
\]
depending on $E=\R\x\R\x\R$ or $E=\C\x\R$.  We describe the test vectors which appear in the local period integrals 
under this realization. We follow the notations in \propref{P:test vector in Whittaker model}. If $E=\R\x\R\x\R$, then we put
\[
W^0=W_{\pi_1}\ot W_{\pi_2}\quad\text{and\quad} f^0=f_{\pi_3}.
\]
If $E=\C\x\R$, then we put
\[
W^0=W_{\pi'}\quad\text{and}\quad f^0=f_\pi,
\]
where $W_{\pi'}$ is given by
\[
W_{\pi'}(g):=
\begin{cases}
(\vec{W}_{\pi'}(g),v_{k',m})_{k'}\quad&\text{if $k'>0$ or $\epsilon=0$},\\
(\vec{W}_{\pi'}^{(2)}(g),v_{2,1})_2\quad&\text{if $k'=0$ and $\epsilon=1$}.
\end{cases}
\]
Here $m$ is defined by \eqref{E:l for CxR}.

\subsubsection{Raising elements}
We describe the raising elements $\bft$ appearing in the local period integrals. If the central character of $\itPi$ is trivial, 
then they are given by \S\ref{SSS:raising element}. But since we only assume the central character of $\itPi$ to be trivial 
on $\R^\x$ now, we need to define $\bft$ for the additional cases. 
Suppose that $E=\R\x\R\x\R$. If $\pi_1,\pi_2,$ are principal series representations, then the additional 
case occurs when $k_1=k_3=1$ and $k_2=0$, in which we put
\[
\bft=({\rm Id}\ot{\rm Id}\ot{\rm Id}),(I_2,I_2,J_2)).
\]
Suppose that $\pi_1$ is a discrete series representation. In the case where $\stt{k_2,k_3}=\stt{0,1}$,  we take $k_3=1$. 
Let $\ell=\frac{k_1-k_2-k_3}{2}$ and we put
\[
\bft
=
({\rm Id}\ot{\rm Id}\ot\t{V}_+^{\ell},(J_2,I_2,I_2)). 
\]
Suppose that $E=\C\x\R$. The additional case occurs when $k'$ is odd, in which we put 
\[
\bft=({\rm Id}\ot{\rm Id},(I_2,J_2)).
\]

\subsubsection{Local period integrals}
Given the descriptions for the test vectors and the raising elements, we now define the local period integrals 
$I^*(\itPi,\bft)$ to be 
\[
I^*(\itPi,\bft)
=
\frac{\zeta_{\R}(2)}{\zeta_{E}(2)}\cdot
\frac{L(1, \itPi, {\rm Ad})}{L(\frac{1}{2},\itPi, r)}\cdot
\frac{\sI_{{\rm her}}(\itPi(\bft)(W^0\ot f^0);\itPi(\bft)(W^0\ot f^0))}{\|W^0\|^2\cdot\|f^0\|^2}.
\]
It follows from Remark \ref{R:well-defined of m} that the local period integrals depend only on the isomorphism
class of $\itPi$. Moreover, it is independent of the constant multiple for the test vectors or the hermitian pairings, but it 
does depend on the choice of Haar measure on $\R^{\x}\backslash\G(\R)$.

\subsubsection{$L$-factors}
We need a simple lemma to deal with the $L$-factors appearing in the local period integrals. 
Let $W_\R$ (resp. $W_\C$) be the Weil group of $\R$ (resp. $\C$) (\cite[(1.4.3)]{Tate1979}). 
For a finite-dimensional semi-simple 
representation $\sigma$ of $W_\R$ or $W_\C$, we denote by $L(s,\sigma)$ the $L$-factor 
attached to $\sigma$ as in \cite[(3.3.1)]{Tate1979}. We refer to \cite{Knapp1994} for more details on
the classification of semi-simple representations of $W_\R$ or $W_\C$ and the definitions of the 
$L$-factors and the $\epsilon$-factors. The following lemma follows easily from the definition of the $L$-factor 
attached to a simple representation of $W_\R$.

\begin{lm}\label{L:property of L-factor}
Let $\sigma$ be a finite-dimensional semi-simple representation of $W_\R$. Suppose that 
$\t{\sigma}\cong\b{\sigma}$ where $\t{\sigma}$ (resp. $\b{\sigma}$) is the dual (resp. conjugate) 
representation of $\sigma$.  Then we have
\[
\ol{L(s,\sigma)}=L(\b{s},\t{\sigma}).
\]
In particular, if $\pi_\cK$ is a unitary irreducible admissible generic representation of $\G(\cK)$, 
where $\cK=\R\x\R$ or $\C$, then we have
\[
\ol{L(s,{\rm As}\,\pi_\cK)}=L(\b{s},{\rm As}\,\t{\pi}_\cK).
\]
\end{lm}


\subsubsection{Local period integrals for real case}
We compute that local period integrals when $E=\R\x\R\x\R$. We note the followings.
Firstly,  $\omega_\itPi=\prod_{j=1}^3\mu_j\nu_j$ is
trivial on $\R^{\x}$ implies that $k_1+k_2+k_3$ is even and 
$\mu_1\mu_2\mu_3=|\cdot|^{s_1+s_2+s_3}{\rm sgn}^\lambda$ for some $\lambda\in\stt{0,1}$.
Secondly, by \corref{C:regularization for hermitian} and the results in \S\ref{SS:Calculation of the norms}, it 
suffices to compute the local Rankin-Selberg integrals 
\[
\sR_\delta(\itPi(\bft)(W_{\pi_1}\ot W_{\pi_2}\ot f_{\pi_3})).
\]
Indeed, by \corref{C:regularization for hermitian} and the choices of $\delta$ in \eqref{E:delta for R} and $\psi$,
we have
\begin{equation}\label{E:reduction of local period for RxRxR}
I^*(\itPi,\bft)
=
\pi^2
\cdot
C
\cdot
\frac{L(1,\itPi,{\rm Ad})}{L(\frac{1}{2},\itPi,r)}
\cdot
\frac{\left|\sR_\delta(\itPi(\bft)(W_{\pi_1}\ot W_{\pi_2}\ot f_{\pi_3}))\right|^2}{\|W_{\pi_1}\|^2\|W_{\pi_2}\|^2\|f_{\pi_3}\|^2}.
\end{equation}
Finally, we have the following factorization of the $L$-factors
\begin{equation}\label{E:factorization of triple L for RxRxR}
L(s,\itPi,r)=L(s,{\rm As}\,(\pi_1\bt\pi_2)\ot{\mu_3})L(s,{\rm As}\,(\pi_1\bt\pi_2)\ot{\nu_3}),
\end{equation}
and the formulae of $L(s,{\rm As}\,(\pi_1\bt\pi_2)\ot{\mu_3})$ and 
$\epsilon(s,{\rm As}\,(\pi_1\bt\pi_2)\ot{\mu_3})$, where $\mu_3$ is trivial can be found in \cite[Section 17]{JLbook2},
while the non-trivial case can be deduced from that.


\begin{prop}\label{P:local period integral for RxRxR}
Suppose that $\pi_1, \pi_2$ are principal series representations. Then
\[
I^*(\itPi,\bft)
=
\begin{cases}
2^{-6}
&\quad\text{if $k_1=k_2=k_3=0$ and $\mu_1\mu_2\mu_3(-1)=-1$},\\
1
&\quad\text{otherwise}.
\end{cases}
\]
Suppose that $\pi_1$ is a discrete series representation. Then
\[
I^*(\itPi,\bft)
=
\begin{cases}
2^{-k_1+2k_2+2k_3-1}
&\quad\text{if $\pi_2$ is a principal series representation},\\
2^{-k_1+k_2+2k_3}
&\quad\text{if $\pi_2$ is a discrete series representation}.
\end{cases}
\]
\end{prop}

\begin{proof}
Suppose that $\pi_1,\pi_2$ are principal series representations.
We have the following three sub-cases:
\begin{itemize}
\item[(a)]
$k_1=k_2=k_3=0$ and $\mu_1\mu_2\mu_3(-1)=1$;
\item[(b)]
$k_1=k_2=k_3=0$ and $\mu_1\mu_2\mu_3(-1)=-1$;
\item[(c)]
$k_1=k_3=1$ and $k_2=0$.
\end{itemize}
We only compute the sub-case (b) since the others are similar. We point out why we need weight raising 
elements in this case even when the minimal weights are already matching. Indeed, since 
$W_{\pi_1}\ot W_{\pi_2}\ot f_{\pi_3}$ is ${\rm SO}(2)$-invariant on the right and
we have
\[
(W_{\pi_1}\ot W_{\pi_2})\left(\pMX{-y_1}{0}{0}{1}\right)f_{\pi_3}\left(\pMX{-y_2}{0}{0}{1}\right)
=\mu_1\mu_2\mu_3(-1)
(W_{\pi_1}\ot W_{\pi_2})\left(\pMX{y_1}{0}{0}{1}\right)f_{\pi_3}\left(\pMX{y_2}{0}{0}{1}\right)
\]
by \propref{P:test vector in Whittaker model} $(2)$. It follows from this observation and 
\eqref{E:decomposition of rankin-selberg integral} that $\sR_\delta(W_{\pi_1}\ot W_{\pi_2}\ot f_{\pi_3})=0$ if we do not 
use the raising elements.
Back to the computation for the sub-case $(b)$. We have
\[
\itPi(\bft)(W_{\pi_1}\ot W_{\pi_2}\ot f_{\pi_3})=W_{\pi_1}\ot\rho(\t{V}_+)W_{\pi_2}\ot\rho(J_2)\rho(\t{V}_+)f_{\pi_3}
\]
which is ${\rm SO}(2)$-invariant on the right. 
By 
\propref{P:test vector in Whittaker model} $(2)$,
\eqref{E:raising element on section} and
\lmref{L:integration formula for Bessel function} $(3)$, we find that
\begin{align*}
\sR_\delta&(\itPi(\bft)(W_{\pi_1}\ot W_{\pi_2}\ot f_{\pi_3}))\\
&=
\mu_2(-1)2^{-2}\pi^{-1}\frac{\Gamma(s_3+\frac{3}{2})}{\Gamma(s_3+\frac{1}{2})}
\int_{0}^\infty
y^{s_3+\frac{1}{2}}
K_{s_1}(2\pi y)K_{s_2}(2\pi y)dy\\
&=
\mu_2(-1)2^{-5}\pi^{-s_3-\frac{5}{2}}
\frac
{\Gamma\left(\frac{s_1+s_2+s_3}{2}+\frac{3}{4}\right)
\Gamma\left(\frac{s_1-s_2+s_3}{2}+\frac{3}{4}\right)
\Gamma\left(\frac{-s_1+s_2+s_3}{2}+\frac{3}{4}\right)
\Gamma\left(\frac{-s_1-s_2+s_3}{2}+\frac{3}{4}\right)}
{\Gamma\left(s_3+\frac{1}{2}\right)}\\
&=
\mu_2(-1)2^{-5}\pi^{s_3+\frac{1}{2}}
\frac
{L\left(\frac{1}{2},{\rm As}\,(\pi_1\bt\pi_2)\ot\mu_3\right)}
{\Gamma\left(s_3+\frac{1}{2}\right)}.
\end{align*}
The adjoint $L$-factor when evaluated at $s=1$ is given by
\begin{equation}\label{E:adjoint of triple L for RxRxR}
L(1,\itPi,{\rm Ad})=\pi^{-3}\prod_{j=1}^3\Gamma(1/2+s_j)\Gamma(1/2-s_j).
\end{equation}
Suppose that $\pi_3$ is tempered. Then both $\mu_3$ and $\nu_3$
are unitary characters so that  $s_3$ is a purely imaginary number. 
It follows from the assumption on the central character of $\itPi$ and 
\lmref{L:property of L-factor} that
\begin{align}\label{E:conjugate for tempered L}
\begin{split}
\ol{L(1/2,{\rm As}\,(\pi_1\bt\pi_2)\ot\mu_3)}
&=
L(1/2,{\rm As}\,(\t{\pi}_1\bt\t{\pi}_2)\ot\mu_3^{-1})\\
&=
L(1/2,{\rm As}\,(\pi_1\bt\pi_2)\ot(\omega_{\pi_1}\omega_{\pi_2}\mu_3)^{-1})\\
&=
L(1/2,{\rm As}\,(\pi_1\bt\pi_2)\ot\nu_3).
\end{split}
\end{align}
Here we have use the facts that $\t{\pi}_j\cong\pi_j\ot\omega^{-1}_{\pi_j}$ and 
${\rm As}\,(\pi_1\ot\omega_{\pi_1}^{-1}\bt\pi_2\ot\omega_{\pi_2}^{-1})\cong{\rm As}\,(\pi_1\bt\pi_2)\ot(\omega_{\pi_1}\omega_{\pi_2})^{-1}$, where $j=1,2$.
It follows that
\[
\ol{\sR_\delta(\itPi(\bft)(W_{\pi_1}\ot W_{\pi_2}\ot f_{\pi_3}))}
=
\mu_2(-1)2^{-5}\pi^{-s_3+\frac{1}{2}}
\frac{L(\frac{1}{2},{\rm As}\,(\pi_1\bt\pi_2)\ot\nu_3)}{\Gamma(-s_3+\frac{1}{2})}.
\]
From these together with 
\lmref{L:norm of Whittaker function for archimedean},
\lmref{L:norms for principal series for R} 
and
\eqref{E:factorization of triple L for RxRxR}, 
\eqref{E:adjoint of triple L for RxRxR},
 \eqref{E:conjugate for tempered L}, one finds that the RHS of \eqref{E:reduction of local period for RxRxR}
is equal to $1$.
Suppose that $\pi_3$ is complementary. Then we have $\mu_3=\chi |\cdot|^{\lambda}$ and 
$\nu_3=\chi |\cdot|^{-\lambda}$ for some unitary character $\chi$ and $\lambda\in\R$ with
$0<|\lambda|<1/2$. Note that $\lambda=s_3$. By a similar argument, we find that 
\begin{equation}\label{E:conjugate for non-tempered L1}
\ol{L(1/2,{\rm As}\,(\pi_1\bt\pi_2)\ot\mu_3)}
=
\ol{L(1/2+\lambda,{\rm As}\,(\pi_1\bt\pi_2)\ot\chi)}
=
L(1/2,{\rm As}\,(\pi_1\bt\pi_2)\ot\mu_3)
\end{equation}
and
\begin{equation}\label{E:conjugate for non-tempered L2}
\ol{L(1/2, {\rm As}\,(\t{\pi}_1\bt\t{\pi}_2)\ot\mu_3^{-1})}
=
\ol{L(1/2-\lambda,{\rm As}\,(\t{\pi}_1\bt\t{\pi}_2)\ot\chi^{-1})}
=
L(1/2,{\rm As}\,(\pi_1\bt\pi_2)\ot\nu_3).
\end{equation}
It follows that
\[
\ol{\sR_\delta(\itPi(\bft)(W_{\pi_1}\ot W_{\pi_2}\ot f_{\pi_3}))}
=
\sR_\delta(\itPi(\bft)(W_{\pi_1}\ot W_{\pi_2}\ot f_{\pi_3})).
\]
On the other hand, we have
\[
\omega_{\pi_1\bt\pi_2}(\delta)
=
\lambda_{\R\x\R/\R}(\psi)=\mu_3(\Delta)=\epsilon(1/2,{\rm As}\,(\pi_1\bt\pi_2)\ot\mu_3,\psi)=1.
\]
From these together with
 \lmref{L:norm of Whittaker function for archimedean},
 \lmref{L:norms for principal series for R} 
and  
\eqref{E:factorization of triple L for RxRxR}, 
\eqref{E:adjoint of triple L for RxRxR},
\eqref{E:conjugate for non-tempered L1}, 
\eqref{E:conjugate for non-tempered L2}, we find that 
the RHS of \eqref{E:reduction of local period for RxRxR} is $2^{-6}$.
This finishes the computations of the sub-case $(b)$.

Suppose that $\pi_1$ is a discrete series representation while $\pi_2$ is a principal series representation.
In this case, we have
\[
\itPi(\bft)(W_{\pi_1}\ot W_{\pi_2}\ot f_{\pi_3})
=
\rho(J_2)W_{\pi_1}\ot W_{\pi_2}\ot\rho(\t{V}^\ell_+)f_{\pi_3}
\]
which is ${\rm SO}(2)$-invariant on the right. We also have the following three sub-cases:
\begin{itemize}
\item[(a)]
$k_1\equiv 0\pmod 2$ and $k_2=k_3=0$;
\item[(b)]
$k_1\equiv 0\pmod 2$ and $k_2=k_3=1$;
\item[(c)]
$k_1\equiv 1\pmod 2$ and $k_2=0, k_3=1$.
\end{itemize}
We only compute the sub-case (c) as the others are similar.
By 
\propref{P:test vector in Whittaker model} $(1)$, $(2)$,
\eqref{E:raising element on section} and
\lmref{L:integration formula for Bessel function} $(2)$, one has
\begin{align*}
\sR_\delta&(\itPi(\bft)(W_{\pi_1}\ot W_{\pi_2}\ot f_{\pi_3}))\\
&=
(-1)^{\frac{k_1-1}{2}} 2^{-k_1+1}\pi^{\frac{-k_1+1}{2}}
\frac{\Gamma\left(s_3+\frac{k_1+1}{2}\right)}{\Gamma\left(s_3+1\right)}
\int_0^\infty
y^{\frac{k_1}{2}+s_3-1}e^{-2\pi y}K_{s_2}(2\pi y)dy\\
&=
(-1)^{\frac{k_1-1}{2}}2^{1-2k_1-2s_3}\pi^{1-k_1-s_3}
\frac
{\Gamma\left(\frac{k_1}{2}+s_2+s_3\right)
\Gamma\left(\frac{k_1}{2}-s_2+s_3\right)}
{\Gamma\left(s_3+1\right)}\\
&=
(-1)^{\frac{k_1-1}{2}}2^{-1-k_1}\pi^{1+s_3}
\frac
{L\left(\frac{1}{2},{\rm As}\,(\pi_1\bt\pi_2)\ot\mu_3\right)}
{\Gamma(s_3+1)}.
\end{align*}
Note that $\pi_3$ is tempered as $k_3=1$. Therefore $s_3$ is a purely imaginary number.
 Also we have
\[
L(1,\itPi,{\rm Ad})
=
2^{1-k_1}\pi^{-4-k_1}\Gamma(k_1)\Gamma(s_2+1/2)\Gamma(-s_2+1/2)\Gamma(s_3+1)\Gamma(-s_3+1).
\]
Now we can proceed as in the previous case to obtain $I^*(\itPi,\bft)=2^{1-k_1}$. This proves the sub-case $(c)$.

Suppose that $\pi_1,\pi_2$ are discrete series representations.
In this case, we have
\[
\itPi(\bft)(W_{\pi_1}\ot W_{\pi_2}\ot f_{\pi_3})
=
\rho(J_2)W_{\pi_1}\ot W_{\pi_2}\ot\rho(\t{V}^\ell_+)f_{\pi_3}
\]
which is ${\rm SO}(2)$-invariant on the right. 
By \propref{P:test vector in Whittaker model} $(1)$ and
\eqref{E:raising element on section}, we find that
\begin{align*}
\sR_\delta&(\itPi(\bft)(W_{\pi_1}\ot W_{\pi_2}\ot f_{\pi_3}))\\
&=
(-1)^{\frac{k_1-k_2-k_3}{2}}2^{-k_1+k_2+k_3}\pi^{\frac{-k_1+k_2+k_3}{2}}
\frac{\Gamma\left(\frac{k_1-k_2+1}{2}+s_3\right)}{\Gamma\left(s_3+\frac{k_3+1}{2}\right)}
\int_0^\infty
y^{\frac{k_1+k_2-1}{2}+s_3-1}e^{-4\pi y}dy\\
&=
(-1)^{\frac{k_1-k_2-k_3}{2}}2^{-2k_1+k_3-2s_3+1}\pi^{-k_1+\frac{1+k_3}{2}-s_3}
\frac{\Gamma\left(\frac{k_1+k_2-1}{2}+s_3\right)\Gamma\left(\frac{k_1-k_2+1}{2}+s_3\right)}
{\Gamma\left(s_3+\frac{k_3+1}{2}\right)}\\
&=
(-1)^{\frac{k_1-k_2-k_3}{2}}2^{-1-k_1+k_3}\pi^{\frac{k_3+1}{2}+s_3}
\frac{L(\frac{1}{2},{\rm As}\,(\pi_1\bt\pi_2)\ot\mu_3)}{\Gamma(s_3+\frac{k_3+1}{2})}.
\end{align*}
The adjoint $L$-factor when evaluated at $s=1$ is given by
\[
L(1,\itPi,{\rm Ad})
=
2^{2-k_1-k_2}\pi^{-3-k_1-k_2-k_3}\Gamma(k_1)\Gamma(k_2)
\Gamma(s_3+(k_3+1)/2)\Gamma(-s_3+(k_3+1)/2),
\]
If $\pi_3$ is complementary, then  
\[
\omega_{\pi_1\bt\pi_2}(\delta)=\epsilon(1/2,{\rm As}\,(\pi_1\bt\pi_2)\ot\mu_3,\psi)=(-1)^{k_1}
\quad\text{and}\quad
\lambda_{\R\x\R/\R}(\psi)=\mu_3(\Delta)=1.
\]
Similar arguments as in the proof of the first case show that $I^*(\itPi,\bft)=2^{-k_1+k_2+2k_3}$.
This completes the proof of the third case.
\end{proof}

\subsubsection{Local period integrals for complex case}
We compute the local period integrals when $E=\C\x\R$.
Note that our assumption on $\omega_\itPi$ implies that $k'+k$ is even and
$\mu'|_{\R^{\x}}\cdot\mu=|\cdot|_\R^{2s'+s}{\rm sgn}^\lambda$ for some $\lambda\in\stt{0,1}$.
To compute the local Rankin-Selberg integrals, we need a lemma.  
Let $W\in\cW(\pi',\psi')$. Define the following analogue of the Tate integral
\[
\zeta(W,\mu)
=
\int_{\R^{\x}}
W\left(\begin{pmatrix}
iy&0\\0&1
\end{pmatrix}\right)
\mu(y)|y|^{-\frac{1}{2}}d^{\x}y.
\]
From the proof of \lmref{L:converge for rankin-selberg integral}, one sees that the assumption 
$\Lambda(\itPi)<1/2$ implies that $\zeta(W,\mu)$ converges absolutely.

\begin{lm}\label{L:Tate integral for CxR}
Let $W_j$ be given by \propref{P:test vector in Whittaker model} $(3)$ with 
$s$ and $k$ replaced by $s'$ and $k'$ respectively with $0\leq j\leq k'$. 
If $\mu'\mu(-1)=(-1)^j$, then  
\[
\zeta(W_j,\mu)
=
2^{-1}\mu'(\sqrt{-1})
(2\pi)^{-s-\frac{k'}{2}-\frac{1}{2}}
\Gamma\left(s'+\frac{s}{2}+\frac{j}{2}+\frac{1}{4}\right)
\Gamma\left(-s'+\frac{s}{2}+\frac{k'}{2}-\frac{j}{2}+\frac{1}{4}\right).
\]
If $\mu'\mu(-1)\neq (-1)^j$, then the integral vanishes.
\end{lm}

\begin{proof}
We have
\[
\zeta(W_j,\mu)
=
\mu'(\sqrt{-1})
\int_{\R^{\x}}
K_{2s'-\frac{k'}{2}+j}(4\pi |y|)\mu'\mu(y){\rm sgn}^j(y)|y|^{-2s'+\frac{k'}{2}-\frac{1}{2}}dy.
\]
In particular, if $\mu'\mu(-1)\neq (-1)^j$, then the integral vanishes. Assume that $\mu'\mu(-1)=(-1)^j$. 
By \lmref{L:integration formula for Bessel function} $(1)$, we find that
\begin{align*}
\zeta(W_j,\mu)
&=
2\mu'(\sqrt{-1})
\int_0^\infty
y^{s+\frac{k'}{2}-\frac{1}{2}}K_{2s'-\frac{k'}{2}+j}(4\pi y)dy\\
&=
2^{-1}\mu'(\sqrt{-1})
(2\pi)^{-s-\frac{k'}{2}-\frac{1}{2}}
\Gamma\left(s'+\frac{s}{2}+\frac{j}{2}+\frac{1}{4}\right)
\Gamma\left(-s'+\frac{s}{2}+\frac{k'}{2}-\frac{j}{2}+\frac{1}{4}\right).
\end{align*}
This finishes the proof.
\end{proof}

The following lemma helps us to simplify the equations.  
  
\begin{lm}\label{L:combinatorial identity}
Let $z,w$ be two complex numbers and $N$ be a 
non-negative integer. Then 
\[
\sum_{j=0}^N
\begin{pmatrix}
N\\j
\end{pmatrix}
\Gamma\left(z+j\right)\Gamma\left(w-j\right)
=
\frac{\Gamma(z)\Gamma(w-N)\Gamma(z+w)}{\Gamma(z+w-N)}.
\] 
\end{lm}

\begin{proof}
This can be proved by using the induction on $N$.
\end{proof}

As we mentioned in the case when $E=\R\x\R\x\R$, it suffices to compute
the Rankin-Selberg integral $\sR_\delta(\itPi(\bft)(W_{\pi'}\ot f_\pi))$. 
By \corref{C:regularization for hermitian} and the choices of $\delta$ in \eqref{E:delta for R} and $\psi$, we have
\begin{equation}\label{E:reduction of local period for CxR}
I^*(\itPi,\bft)
=
2\pi
\cdot
C
\cdot
\frac{L(1,\itPi,{\rm Ad})}{L(\frac{1}{2},\itPi,r)}
\cdot
\frac{\left|\sR_\delta(\itPi(\bft)(W_{\pi'}\ot f_\pi))\right|^2}{\|W_{\pi'}\|^2\|f_\pi\|^2}.
\end{equation}
We also have the following factorization of $L$-factors
\begin{equation}\label{E:factorization of triple L for CxR}
L(s,\itPi,r)=L(s,{\rm As}\,\pi'\ot\mu)L(s,{\rm As}\,\pi'\ot\nu)
\end{equation}
and the formulae of $L(s,{\rm As}\,\pi'\ot\mu)$ and $\epsilon(s,{\rm As}\,\pi'\ot\mu)$, where $\mu$ is trivial 
can be found in \cite[Section 3]{CCI2020}, while the non-trivial case can be deduced from that.

\begin{prop}\label{P:local period integral for CxR}
Let $\epsilon\in\stt{0,1}$ so that $\mu'\mu(-1)=(-1)^{k'+\epsilon}$. 
Then
\[
I^*(\itPi,\bft)
=
\begin{cases}
2^{-k'-2\epsilon}
\frac{\Gamma(k'+2)}{\Gamma(k'-m+1)\Gamma(m+1)}
&\quad\text{if $k'>0$ or $\epsilon=0$},\\
48\pi^2(1/4-s'^2)^{-1}
&\quad\text{if $k'=0$ and $\epsilon=1$}.
\end{cases}
\]
Here $m$ is given by \eqref{E:l for CxR}.
\end{prop}

\begin{proof}
There are four cases depending on the parities of $k'$ and $\epsilon$
\begin{itemize}
\item[(a)]
$k'\geq 0$ is even and $\epsilon=0$;
\item[(b)]
$k'\geq 2$ is even and $\epsilon=1$;
\item[(c)]
$k'> 0$ is odd and $\epsilon=0$;
\item[(d)]
$k'=0$ and $\epsilon=1$.
\end{itemize}
Note that when $k'$ is odd, the two cases $\epsilon=0, 1$ are essentially the same.
Indeed, if $\mu'\mu(-1)=-1$, then $\nu'\mu(-1)=-\mu^2\mu'\nu'(-1)=1$. 
As the local period integrals only depend on the 
isomorphism class of $\itPi$, we can replace $\pi'$ by $\pi(\nu',\mu')$.
The calculations for the cases $(b)$, $(c)$ are similar, so we only demonstrate the computations
for the cases $(a)$, $(c)$ and $(d)$. We note that similar arguments show that \eqref{E:conjugate for tempered L},
\eqref{E:conjugate for non-tempered L1} and \eqref{E:conjugate for non-tempered L2} still hold if we replace
$\pi_1\bt\pi_2$ by $\pi'$.

$(a)$ In this case, we have
\[
\itPi(\bft)(W_{\pi'}\ot f_\pi)
=W_{\pi'}\ot f_\pi
\]
which is ${\rm SO}(2)$- invariant on the right. 
By \lmref{L:Tate integral for CxR} and \lmref{L:combinatorial identity}, we find that
\begin{align*}
\sR_\delta&(\itPi(\bft)(W_{\pi'}\ot f_\pi))\\
&=
\sum_{j=0}^{\frac{k'}{2}}
\begin{pmatrix}
\frac{k'}{2}\\j
\end{pmatrix}
\zeta(W_{2j},\mu)\\
&=
2^{-1}\mu'\left(\sqrt{-1}\right)
(2\pi)^{-\left(s+\frac{k'}{2}+\frac{1}{2}\right)}
\sum_{j=0}^{\frac{k'}{2}}
\begin{pmatrix}
\frac{k'}{2}\\j
\end{pmatrix}
\Gamma\left(s'+\frac{s}{2}+\frac{1}{4}+j\right)
\Gamma\left(-s'+\frac{s}{2}+\frac{k'}{2}+\frac{1}{4}-j\right)\\
&=
2^{-1}\mu'\left(\sqrt{-1}\right)
(2\pi)^{-\left(s+\frac{k'}{2}+\frac{1}{2}\right)}
\frac{
\Gamma\left(s'+\frac{s}{2}+\frac{1}{4}\right)
\Gamma\left(-s'+\frac{s}{2}+\frac{1}{4}\right)
\Gamma\left(s+\frac{k'}{2}+\frac{1}{2}\right)
}
{\Gamma\left(s+\frac{1}{2}\right)}\\
&=
\mu'\left(\sqrt{-1}\right)2^{-2}\pi^{s+\frac{1}{2}}
\frac{L(\frac{1}{2},{\rm As}\,\pi'\ot\mu)}{\Gamma(s+\frac{1}{2})}.
\end{align*}
Note that we have
\[
L(1,\itPi,{\rm Ad})
=
2^{-k'}\pi^{-k'-4}\Gamma(2s'+k'/2+1)\Gamma(-2s'+k'/2+1)\Gamma(s+1/2)\Gamma(-s+1/2).
\]
We first deal with the case where $\pi$ is tempered. 
In this case, both $\mu$ and $\nu$ are unitary characters so that $s$ is a purely 
imaginary number. By \eqref{E:conjugate for tempered L}, we find that
\[
\ol{\sR_\delta(\itPi(\bft)(W_{\pi'}\ot f_\pi))}
=
\mu'(-\sqrt{-1})2^{-2}\pi^{-s+\frac{1}{2}}
\frac{L(\frac{1}{2},{\rm As}\,\pi'\ot\nu)}{\Gamma(-s+\frac{1}{2})}.
\]
It then follows from  \eqref{E:norm for v_n,j},  \lmref{L:norm of Whittaker function for archimedean} and
\lmref{L:norms for principal series for R} that the RHS of 
\eqref{E:reduction of local period for CxR} is equal to
\[
2\pi
\cdot
\frac{L(1,\itPi,{\rm Ad})}{L(\frac{1}{2},\itPi,r)}
\cdot
2^{-1}\pi^{-1}
\frac{L(\frac{1}{2},\itPi,r)}{L(1,\itPi,{\rm Ad})}
\cdot
\frac{k'+1}{\left(v_{k',\frac{k'}{2}},v_{k',\frac{k'}{2}}\right)_{k'}}
=
2^{-k'}\frac{\Gamma(k'+2)}{\Gamma(\frac{k'}{2}+1)^2}.
\]
Suppose that $\pi$ is complementary. Then we have $\mu=\chi |\cdot|^{\lambda}$ and 
$\nu=\chi |\cdot|^{-\lambda}$ for some unitary character $\chi$ and $\lambda\in\R$ with
$0<|\lambda|<1/2$. Note that $s=\lambda$. By \eqref{E:conjugate for non-tempered L1}, 
we see that
\[
\ol{\sR_\delta(\itPi(\bft)(W_{\pi'}\ot f_\pi))}
=
\mu'(-1)\sR_\delta(\itPi(\bft)(W_{\pi'}\ot f_\pi)).
\]
On the other hand, we have
\begin{align*}
\omega_{\pi'}(\sqrt{-1})^{-1}\cdot\lambda_{\C/\R}(\psi)
\cdot
\mu(-1)\cdot\ol{\epsilon(1/2,{\rm As}\,\pi'\ot\mu,\psi)}
=
1.
\end{align*}
Applying \lmref{L:norm of Whittaker function for archimedean},
\lmref{L:norms for principal series for R} and \eqref{E:norm for v_n,j}, \eqref{E:conjugate for non-tempered L1}, 
\eqref{E:conjugate for non-tempered L2},  we find that the RHS of 
\eqref{E:reduction of local period for CxR} is equal to
\[
2\pi
\cdot
\frac{L(\frac{1}{2},{\rm As}\,\pi'\ot\nu)}{L(\frac{1}{2},{\rm As}\,\pi'\ot\mu)}
\cdot
\frac{L(1,\itPi,{\rm Ad})}{L(\frac{1}{2},\itPi,r)}
\cdot
2^{-1}\pi^{-1}
\frac{L(\frac{1}{2},{\rm As}\,\pi'\ot\mu)^2}{L(1,\itPi,{\rm Ad})}
\cdot
\frac{k'+1}{\left(v_{k',\frac{k'}{2}},v_{k',\frac{k'}{2}}\right)_{k'}}
=
2^{-k'}\frac{\Gamma(k'+2)}{\Gamma(\frac{k'}{2}+1)^2}.
\]
This finishes the computations of the case $(a)$.

$(c)$ In this case, we have
\[
\itPi(\bft)(W_{\pi'}\ot f_\pi)
=
W_{\pi'}\ot\rho(J_2)f_\pi
\]
which is ${\rm SO}(2)$-invariant on the right. 
By \lmref{L:Tate integral for CxR} and \lmref{L:combinatorial identity}, we find that
\begin{align*}
\sR_\delta&(\itPi(\bft)(W_{\pi'}\ot f_\pi))\\
&=
\mu(-1)\sum_{j=0}^{\frac{k'-1}{2}}
\begin{pmatrix}
\frac{k'-1}{2}\\j
\end{pmatrix}
\zeta(W_{2j+1},\mu)\\
&=
2^{-1}\mu(-1)\mu'\left(\sqrt{-1}\right)
(2\pi)^{-\left(s+\frac{k'}{2}+\frac{1}{2}\right)}
\sum_{j=0}^{\frac{k'-1}{2}}
\begin{pmatrix}
\frac{k'-1}{2}\\j
\end{pmatrix}
\Gamma\left(s'+\frac{s}{2}+\frac{3}{4}+j\right)
\Gamma\left(-s'+\frac{s}{2}+\frac{k'}{2}-\frac{1}{4}-j\right)\\
&=
2^{-1}\mu(-1)\mu'\left(\sqrt{-1}\right)
(2\pi)^{-\left(s+\frac{k'}{2}+\frac{1}{2}\right)}
\frac{
\Gamma\left(s'+\frac{s}{2}+\frac{3}{4}\right)
\Gamma\left(-s'+\frac{s}{2}+\frac{1}{4}\right)
\Gamma\left(s+\frac{k'}{2}+\frac{1}{2}\right)
}
{\Gamma\left(s+1\right)}\\
&=
\mu(-1)\mu'\left(\sqrt{-1}\right)2^{-2}\pi^{1+s}
\frac{L(\frac{1}{2},{\rm As}\,\pi'\ot\mu)}{\Gamma(s+1)}.
\end{align*}
The adjoint $L$-factor when evaluated at $s=1$ is given by
\[
L(1,\itPi,{\rm Ad})
=
2^{-k'}\pi^{k'-5}
\Gamma(2s'+k'/2+1)
\Gamma(-2s'+k'/2+1)
\Gamma(s+1)
\Gamma(-s+1).
\]
Observe that $\pi$ is tempered since $k=1$. We can proceed as in the case $(a)$ to show that 
\[
I^*(\itPi,\bft)=2^{-k'}\frac{\Gamma(k'+2)}{\Gamma(\frac{k'+1}{2})\Gamma(\frac{k'+3}{2})}.
\]
This shows the case $(c)$.

(d) We first explain why we need to separate this case from the second. 
Note that $\vec{W}_{\pi'}$ given by 
\propref{P:test vector in Whittaker model} $(3)$ is actually a scalar-valued function and ${\rm SO}(2)$-invariant
on the right. We show that $\sR_\delta(\vec{W}_{\pi'}\ot f)=0$
for every $f\in\cB(\mu,\nu)$. We may assume that $f(k(\theta))=e^{in\theta}$ for some even integer $n$.
By \propref{P:test vector in Whittaker model} $(3)$, we find that
\[
\vec{W}_{\pi'}\left(\pMX{-y_1}{0}{0}{1}k(\theta)\right)f\left(\pMX{-y_2}{0}{0}{1}k(\theta)\right)
=
-e^{in\theta}\vec{W}_{\pi'}\left(\pMX{y_1}{0}{0}{1}\right)f\left(\pMX{y_2}{0}{0}{1}\right).
\]
From this and \eqref{E:decomposition of rankin-selberg integral},
we see that $\sR_\delta(\vec{W}_{\pi'}\ot f)=0$ for every $f\in\cB(\mu,\nu)$.
Back to the computations for the case $(d)$. We have
\[
\itPi(\bft)(W_{\pi'}\ot f_\pi)
=
W_{\pi'}\ot f_\pi
\]
which is ${\rm SO}(2)$-invariant on the right. 
By \propref{P:test vector in Whittaker model} $(3)$ and \lmref{L:integration formula for Bessel function} $(1)$, we obtain
\begin{align*}
\sR_\delta(\itPi(\bft)(W_{\pi'}\ot f_\pi))
&=
\zeta\left(W^{(2)}_0,\mu\right)+\zeta\left(W^{(2)}_2,\mu\right)\\
&=
-\mu'\left(\sqrt{-1}\right)2^{-s+\frac{1}{2}}\pi^{-s-\frac{3}{2}}
\Gamma\left(s'+\frac{s}{2}+\frac{3}{4}\right)
\Gamma\left(-s'+\frac{s}{2}+\frac{3}{4}\right)\\
&=
-\mu'(\sqrt{-1})\pi^{s+\frac{1}{2}}
\frac{L(\frac{1}{2},{\rm As}\,\pi'\ot\mu)}{\Gamma(s+\frac{1}{2})}.
\end{align*}
We need to compute the $L^2$-norm $\|W_{\pi'}\|^2$, which follows easily from 
\lmref{L:norm of Whittaker function for archimedean} and 
\eqref{E:norm for v_n,j},
\eqref{E:Schur's orthogonal relation}
\[
\|W_{\pi'}\|^2
=
\frac{(v_{2,1},v_{2,1})_2}{3}\|\vec{W}^{(2)}_{\pi'}\|^2
=
2^{-5}3^{-1}\pi^{-4}\Gamma(2s'+2)\Gamma(-2s'+2).
\]
Note that we have
\[
L(1,\itPi,{\rm Ad})
=
\pi^{-4}
\Gamma(2s'+1)
\Gamma(-2s'+1)
\Gamma(s+1/2)
\Gamma(-s+1/2).
\]
Also if $\pi$ is complementary, then one checks that 
\begin{align*}
\omega_{\pi'}(\sqrt{-1})^{-1}\cdot\lambda_{\C/\R}(\psi)
\cdot
\mu(-1)\cdot\ol{\epsilon(1/2,{\rm As}\,\pi'\ot\mu,\psi)}
=
1.
\end{align*}
Similar arguments as in the proof of the case $(a)$ show that $I^*(\itPi,\bft)=48\pi^2(1/4-s'^2)^{-1}$. 
This completes the proof.
\end{proof}

\begin{Remark}
By \cite[Lemma 6.1 (i)]{JLbook}, the positive real number $(1/4-s'^2)$ is essentially the eigenvalue 
of a central element of $\frak{U}_\C$ when acting on $\pi'$.
\end{Remark}

\section{Proof of \propref{P:regularization}}\label{S:regularization}
The purpose of this section is to prove \propref{P:regularization} when $F$ is archimedean.
Therefore $F=\R$ or $\C$ in this section. We follow the notations in 
\S\ref{SSS:An identity between invariant forms}. It is not hard to see that if
\propref{P:regularization} holds for $\psi$ and $\delta$, then it also 
holds for $\psi_a$ and $b\delta$ for any $a,b\in F^{\x}$. Here $\psi_a$ is a character of $F$ 
defined by $\psi_a(x)=\psi(ax)$. As a consequence of this observation, we let $\psi$ be the 
character of $F$ as in \eqref{E:additive character of archimedean}, and let
$\delta$ be given by \eqref{E:delta for R} in the rest of this section. The Haar measures on
various groups are those stated in \S\ref{SSS:An identity between invariant forms}. 
To unify the notations, we extend $|\cdot|$ to $\C$ by $|x|:=\sqrt{x\b{x}}$. 
Then $|x|_F=|x|^d$ for $x\in F$, where $d:=[F:\R]$. 

Define the Gaussian function on $F$ by $\varphi_F(x)=e^{-d\pi |x|^2}$. More generally, if $t$ 
is a positive real number, then we set $\varphi_{F,t}(x)=\varphi_F(\frac{x}{\sqrt{t}})$. The 
Fourier transform $\wh{\varphi}_{F,t}$ of $\varphi_{F,t}$ is given by
\[
\wh{\varphi}_{F,t}(x)
=
\int_F
\varphi_{F,t}(y)\psi_F(xy)dy
=
t^{\frac{d}{2}}e^{-d\pi t|x|^2}.
\]
Observe that $\stt{\wh{\varphi}_{F,t}}_{t>0}$ is an approximate identity as $t\to\infty$. 

\begin{lm}\label{L:main estimate}
Let $0\leq\lambda<1$. Define for $t>0$, the function
\[
g_F(t)
=
\int_F |x-1|_F^{-\lambda}\wh{\varphi}_{F,t}(x)dx
=
\int_F|x-1|_F^{-\lambda}t^{\frac{d}{2}}e^{-d\pi t|x|^2}dx.
\]
Then we have $g_F\ll_{d,\lambda} 1$. 
\end{lm}

\begin{proof}
Fix $0<\epsilon<1$. We divide the integral into two parts
\[
g_F(t)
=
\int_{|x-1|_F>\epsilon}|x-1|_F^{-\lambda}\wh{\varphi}_{F,t}(x)dx
+
\int_{|x-1|_F\leq \epsilon}|x-1|_F^{-\lambda}\wh{\varphi}_{F,t}(x)dx.
\] 
On one hand, we have
\[
\int_{|x-1|_F>\epsilon}|x-1|_F^{-\lambda}\wh{\varphi}_{F,t}(x)dx
\leq 
\epsilon^{-\lambda}\int_F\wh{\varphi}_{F,t}(x)dx
=
\epsilon^{-\lambda}.
\]
On the other, 
\[
\int_{|x-1|_F\leq \epsilon}|x-1|_F^{-\lambda}\wh{\varphi}_{F,t}(x)dx
\leq
t^{\frac{d}{2}}e^{-d\pi(1-\epsilon)^2t}\int_{|x-1|_F\leq\epsilon}|x-1|_F^{-\lambda}dx
=
c_F\frac{\epsilon^{1-\lambda}}{1-\lambda}t^{\frac{d}{2}}e^{-d\pi(1-\epsilon)^2t}
\]
where $c_\R=2$ and $c_\C=2\pi$. Since the function $t\mapsto t^{\frac{d}{2}}e^{-d\pi(1-\epsilon)^2t}$ 
is bounded on $\R_{>0}$, the lemma follows.
\end{proof}
  
Let $n$ be a natural number. Define a function $\phi_{F,n}$ on $F^{\x}\backslash\G(F)$ by
\[
\phi_{F,n}(g)
=
e^{-\frac{d\pi}{n}\left(|r|+|r|^{-1}\right)}
\quad\text{if}\quad
g=zk\pMX{r}{0}{0}{1} h\in\G(F)
\]
where $k,h\in K$ and $z, r\in F^{\x}$. Next lemma shows that $\phi_{F,n}$ is well-defined.
 
\begin{lm}\label{L:function for regularization}
The function $\phi_{F,n}(g)$ is well-defined and bi-$K$-invariant. 
Moreover, we have $\lim_{n\to\infty}\phi_{F,n}(g)=1$ pointwisely on $F^{\x}\backslash\G(F)$ and
\[
\phi_{F,n}\left(\pMX{y}{x}{0}{1}\right)
=
e^{-\frac{d\pi}{n}(|y|+|y|^{-1})}
\varphi_{F,n|y|}(x),
\]
for $y\in F^{\x}$ and $x\in F$.
\end{lm}

\begin{proof}
For $g\in\G(F)$, let $g^*={}^t\b{g}$ be the conjugate transpose of $g$. Suppose that
\[
g=k_1\pMX{y_1}{0}{0}{y_2} k_2=k_3\pMX{y_3}{0}{0}{y_4} k_4
\]
where $y_j\in F^{\x}$ and $k_\ell\in K$ for $1\leq j,\ell\leq 4$. By considering the determinant
of $g$, we find that $y_1y_2=y_3y_4$. On the other hand, both $\stt{|y_1|^2, |y_2|^2}$ and 
$\stt{|y_3|^2, |y_4|^2}$ are the set of eigenvalues of $gg^*$, and hence they are equal. It follows
that
\[
e^{-\frac{d\pi}{n}(|y_1/y_2|+|y_1/y_2|^{-1})}
=
e^{-\frac{d\pi}{n}(|y_3/y_4|+|y_3/y_4|^{-1})}.
\] 
This shows that $\phi_{F,n}(g)$ is well-defined. It is clear from the definition that $\phi_{F,n}(g)$
is bi-$K$-invariant and that $\lim_{n\to\infty}\phi_{F,n}(g)=1$ pointwisely on 
$F^{\x}\backslash\G(F)$. To prove the last assertion, let $g=\pMX yx01=zk\pMX{r}{0}{0}{1} h$,
for some $z,r\in F^{\x}$ and $k,h\in K$. Then
\[
gg^*
=
\begin{pmatrix}
|x|^2+|y|^2&x\\\b{x}&1
\end{pmatrix}
=
|z|^2k
\begin{pmatrix}
|r|^2&0\\0&1
\end{pmatrix}
k^*.
\]
Taking trace and norm, we find that 
$|x|^2+|y|^2+1=|z|^2(|r|^2+1)$ and $|y|^2=|z|^4|r|^2$. From these equations we obtain 
\[
\frac{|x|^{2}}{|y|}+|y|+|y|^{-1}
=
|r|+|r|^{-1}.
\]
This proves the last assertion and hence concludes the proof.
\end{proof}

In the following, we define $a(y), n(x)\in\G(F)$ by
\[
a(y)=\pMX y001
\quad\text{and}\quad
n(x)=\pMX 1x01
\]
where $y\in F^{\x}$ and $x\in F$. 
Recall the local trilinear integral $\sI(W\ot f;\t{W}\ot\t{f})$ is defined by 
\eqref{E:trilinear local integral}. For each $n\in\mathbb{N}$, we put
\[
\sI_n(W\ot f;\t{W}\ot\t{f})
=
\int_{F^{\x}\backslash\G(F)}
\cB_{\pi_\cK}(\rho(g)W,\t{W})
\cB_{\pi}(\rho(g)f,\t{f})\phi_{F,n}(g)dg.
\]
By \lmref{L:function for regularization} and the dominated convergence theorem, one sees 
that 
\[
\lim_{n\to\infty}\sI_n(W\ot f;\t{W}\ot\t{f})=\sI(W\ot f;\t{W}\ot\t{f}).
\] 
To prove \propref{P:regularization}, it remains to show that
\begin{equation}\label{E:limit to RankinSelberg}
\lim_{n\to\infty}\sI_n(W\ot f;\t{W}\ot\t{f})
=
\frac{\zeta_\cK(1)}{\zeta_F(1)}
\cdot\sR_\delta(W\ot f)
\cdot\t{\sR}_\delta(\t{W}\ot\t{f}).
\end{equation}
The rest of this section is devoted to prove \eqref{E:limit to RankinSelberg}.
There are two cases depending on $\cK=F\x F$ or $\cK=\C$.

\subsection{Case $\cK=F\x F$}
In this case, we have $\pi_\cK=\pi_1\bt\pi_2$ and 
$\cW(\pi_\cK,\psi_\cK)=\cW(\pi_1,\psi)\ot\cW(\pi_2,\psi)$, where $\pi_1, \pi_2$ are unitary
irreducible admissible generic representations of $\G(F)$. Let $\cB_{\pi_j}$ be the 
$\G(F)$-invariant bilinear pairing between $\cW(\pi_j,\psi)$ and $\cW(\t{\pi}_j,\psi)$ defined
by \eqref{E:bilinear pairing for Whittaker} with $\cK^{\x}$ be replaced by $F^{\x}$ for $j=1,2$.
Then we have $\cB_{\pi_\cK}=\cB_{\pi_1}\ot\cB_{\pi_2}$.
Let $W_1\in\cW(\pi_1,\psi)$, $\t{W}_1\in\cW(\t{\pi}_1,\psi)$ and $g,h\in\G(F)$. 
Define
\[
\sA_n(\rho(g)W_1,\rho(h)\t{W}_1)
=
\int_F
\psi(x)\cB_{\pi_1}(\rho(n(x)g)W_1,\rho(h)\t{W}_1)\phi_{F,n}(h^{-1}n(x)g)dx.
\]
The following lemma is the core of the proof for the present case.  
\begin{lm}\label{L:main lemma for split}
We have 
\[
\lim_{n\to\infty}
\sA_n(\rho(a(y_1))W_1,\rho(a(y_2))\t{W}_1)
=
\zeta_F(1)W_1(a(-y_1))\t{W}_1(a(y_2)).
\]
\end{lm}

\begin{proof}
The proof is similar to that of the Fourier inversion formula. Put
\[
c_{F,n}(y_1,y_2)
=
\zeta_F(1)e^{-\frac{d\pi}{n}(|y_1/y_2|+|y_1/y_2|^{-1})}.
\]
Observe that ${\rm lim}_{n\to\infty}c_{F,n}(y_1,y_2)=\zeta_F(1)$.
By \lmref{L:function for regularization} and the choice of the measure on $F^{\x}$, 
we find that formally
\begin{align}\label{E:computation for A_n split}
\begin{split}
\sA_n(&\rho(a(y_1))W_1,\rho(a(y_2))\t{W}_1)\\
&=
c_{F,n}(y_1,y_2)
\int_F\int_{F^{\x}}\psi((t+1)x)W_1(a(y_1 t))\t{W}_1(a(-y_2 t))\varphi_{F,n|y_1y_2|}(x)|t|_F^{-1}dtdx\\
&=
c_{F,n}(y_1,y_2)
\int_{F}
W_1(a(y_1(-1+t)))\t{W}_1(a(-y_2(-1+t)))|-1+t|_F^{-1}\wh{\varphi}_{F,n|y_1y_2|}(t)dt.
\end{split}
\end{align}
To justify the computation, we use \eqref{E:estimate Whittaker function} to obtain
\begin{align*}\label{E:estimate of A_n split}
\begin{split}
\int_F\int_{F^{\x}}&\psi((t+1)x)W_1(a(y_1 t))\t{W}_1(a(-y_2 t))\varphi_{F,n|y_1y_2|}(x)|t|_F^{-1}dtdx\\
&\ll_{\lambda(\pi_1),W_1,\t{W}_1,\epsilon}
\int_F\int_{F^{\x}}
|y_1y_2|_F^{1/2-\lambda(\pi_1)-\epsilon}|t|_F^{-2\lambda(\pi_1)-2\epsilon}
\Phi(y_1t)\t{\Phi}(y_2t)\varphi_{F,n|y_1y_2|}(x)dtdx.
\end{split}
\end{align*}
Since we can choose $\epsilon$ so that $-2\lambda(\pi_1)-2\epsilon>-1$, the last integral converges absolutely. 
The lemma follows from \eqref{E:computation for A_n split} and the fact that 
$\stt{\wh{\varphi}_{F,n|y_1y_2|}}_{n\in\mathbb{N}}$ is an approximate identity as $n\to\infty$.
\end{proof}

We first compute the integral $\sI_n(W\ot f;\t{W}\ot\t{f})$ formally. 
Without loss of generality, we may assume that
$W=W_1\ot W_2$ and $\t{W}=\t{W}_1\ot\t{W}_2$, where $W_j\in\cW(\pi_j,\psi)$ and 
$\t{W}_j\in\cW(\t{\pi}_j,\psi)$ for $j=1,2$. To proceed with the computation,
we follow the trick in \cite{MV2010}. Put
\[
F(h)=W_2(h)f(h)
\quad\text{and}\quad
\t{F}(h)=\t{W}_2\left(J_2 h\right)\t{f}(h).
\] 
Define
\[
\cB(F,\t{F})
=
\int_{F^{\x}\N(F)\backslash\G(F)}F(h)\t{F}(h)dh.
\]
This integral converges absolutely and one checks easily that (\cite[Lemma 3.2.7]{MV2010})
\[
\cB(F,\t{F})
=
\cB_{\pi_2}(W_2,\t{W}_2)\cB_{\pi}(f,\t{f}).
\]
It follows that
\begin{align*}
\sI_n(W\ot f;\t{W}\ot\t{f})
&=
\int_{F^{\x}\backslash\G(F)}
\cB_{\pi_1}(\rho(g)W_1,\t{W}_1)\cB(\rho(g)F,\t{F})\phi_{F,n}(g)dg\\
&=
\int_{F^{\x}\backslash\G(F)}\int_{F^{\x}\N(F)\backslash\G(F)}
\cB_{\pi_1}(\rho(g)W_1,\t{W}_1)F(hg)\t{F}(h)\phi_n(g)dhdg\\
&=
\int_{F^{\x}\N(F)\backslash\G(F)}\int_{F^{\x}\backslash\G(F)}
\cB_{\pi_1}(\rho(g)W_1,\rho(h)\t{W}_1)F(g)\t{F}(h)\phi_{F,n}(h^{-1}g)dgdh\\
&=
\int_{F^{\x}\N(F)\backslash\G(F)}\int_{F^{\x}\N(F)\backslash\G(F)}
\sA_n(\rho(g)W_1,\rho(h)\t{W}_1)F(g)\t{F}(h)dgdh.
\end{align*}
To justify the computation, it remains to show that the integral
\[
\int_{F^{\x}\N(F)\backslash\G(F)}\int_{F^{\x}\N(F)\backslash\G(F)}
\sA_n(\rho(g)W_1,\rho(h)\t{W}_1)F(g)\t{F}(h)dgdh
\]
is absolutely convergent. By the decomposition \eqref{E:decomposition of rankin-selberg integral},
the integral becomes
\begin{align*}\label{E:expansion for split}
\begin{split}
\int_K\int_K f(k_1)\t{f}(k_2)\int_{F^{\x}}\int_{F^{\x}}
\sA_n(&\rho(a(y_1)k_1)W_1,\rho(a(y_2)k_2)\t{W}_1)\\
&\x W_2(a(y_1)k_1)\t{W}_2(a(-y_2)k_2)\mu(y_1y^{-1}_2)|y_1y_2|_F^{-\frac{1}{2}}
d^{\x}y_1d^{\x}y_2dk_1dk_2.
\end{split}
\end{align*}
By the right $K$-finiteness and the fact that $K$ is compact, it suffices to show that the integral
\begin{equation}\label{E:main integrand for split}
\int_{F^{\x}}\int_{F^{\x}}
\sA_n(\rho(a(y_1))W_1,\rho(a(y_2))\t{W}_1)
W_2(a(y_1))\t{W}_2(a(-y_2))\mu(y_1y^{-1}_2)|y_1y_2|_F^{-\frac{1}{2}}
d^{\x}y_1d^{\x}y_2
\end{equation}
is absolutely convergent. Let $\lambda_j=\lambda(\pi_j)$ for $j=1,2$, and let 
$|\mu(y)|=|y|_F^\lambda$ for some $\lambda\in\R$. Note that we have $\lambda(\pi)=|\lambda|$ and 
$\Lambda(\itPi)=\lambda_1+\lambda_2+|\lambda|<1/2$. 
By \eqref{E:computation for A_n split} and \lmref{L:main estimate}, 
\eqref{E:estimate Whittaker function}, we find that 
\begin{align*}
\sA_n(\rho(a(y_1))W_1,\rho(a(y_2))\t{W}_1))
&=
c_{F,n}(y_1,y_2)
\int_{F}
W_1(a(y_1(-1+t)))\t{W}_1(-y_2(-1+t))|-1+t|_F^{-1}\wh{\varphi}_{F,n|y_1y_2|}(t)dt\\
&\ll_{\lambda_1,W_1,\t{W}_1,\epsilon}
|y_1y_2|_F^{\frac{1}{2}-\lambda_1-\epsilon}
\int_F|t-1|^{-2\lambda_1-2\epsilon}_F\wh{\varphi}_{F,n|y_1y_2|}(t)dt\\
&\ll_{\lambda_1,W_1,\t{W}_1,d,\epsilon}
|y_1y_2|_F^{\frac{1}{2}-\lambda_1-\epsilon}.
\end{align*}
Using \eqref{E:estimate Whittaker function} again we see that 
\begin{align*}
\sA_n(\rho(a(y_1))W_1,\rho(a(y_2))\t{W}_1)
&W_2(a(y_1))\t{W}_2(a(-y_2))\mu(y_1y^{-1}_2)|y_1y_2|_F^{-\frac{1}{2}}\\
&\ll_{\lambda_1,\lambda_2,W,\t{W},d,\epsilon}
|y_1|_F^{\frac{1}{2}-\lambda_1-\lambda_2+\lambda-2\epsilon}
|y_2|_F^{\frac{1}{2}-\lambda_1-\lambda_2-\lambda-2\epsilon}
\Phi(y_1)\t{\Phi}(y_2)
\end{align*}
for some Schwartz functions $\Phi$ and $\t{\Phi}$. This shows that the integral 
\eqref{E:main integrand for split} converges absolutely. Moreover, the integrand of 
\eqref{E:main integrand for split} is bounded by an integrable function on $F^{\x}\x F^{\x}$, 
which is independent of $n$. By the right $K$-finiteness and the dominated convergence
theorem, we find that 
\begin{align*}
\begin{split}
\lim_{n\to\infty}\sI_n(W\ot f;\t{W}\ot\t{f})
=
\int_K&\int_K f(k_1)\t{f}(k_2)\int_{F^{\x}}\int_{F^{\x}}
\lim_{n\to\infty}\sA_n(\rho(a(y_1)k_1)W_1,\rho(a(y_2)k_2)\t{W}_1)\\
&\x W_2(a(y_1)k_1)\t{W}_2(a(-y_2)k_2)\mu(y_1y^{-1}_2)|y_1y_2|_F^{-\frac{1}{2}}
d^{\x}y_1d^{\x}y_2dk_1dk_2,
\end{split}
\end{align*}
which by \lmref{L:main lemma for split} is equal to
\begin{align*}
\zeta_F(1)\int_K\int_K f(k_1)\t{f}(k_2)\int_{F^{\x}}\int_{F^{\x}}
&W_1(a(-y_1)k_1)\t{W}_1(a(y_2)k_2)\\
&\x W_2(a(y_1)k_1)\t{W}_2(a(-y_2)k_2)\mu(y_1y^{-1}_2)|y_1y_2|_F^{-\frac{1}{2}}
d^{\x}y_1d^{\x}y_2dk_1dk_2.
\end{align*}
This is precisely the RHS of \eqref{E:limit to RankinSelberg}. This completes the proof for the 
case where $\cK=F\x F$.

\subsection{Case $\cK=\C$}
In this subsection, we prove \eqref{E:limit to RankinSelberg} for the case where $\cK=\C$.
Let $W\in\cW(\pi_\C,\psi_\C)$, $\t{W}\in\cW(\t{\pi}_\C,\psi_\C)$ and $g,h\in\G(\R)$.
Define
\[
\sA_n(\rho(g)W,\rho(h)\t{W})
=
\int_\R
\cB_{\pi_\C}(\rho(n(x)g)W,\rho(h)\t{W})\phi_{\R,n}(h^{-1}n(x)g) dx.
\]
As in the proof of the case where $\cK=F\x F$, we investigate the behaviour of 
$\sA_n(\rho(g)W,\rho(h)\t{W})$ when $n\to\infty$.

\begin{lm}\label{L:main lemma for nonsplit}
We have
\[
\lim_{n\to\infty}
\sA_n(\rho(a(y))W,\t{W})
=
\zeta_\C(1)\int_{\R^{\x}}
W(a(iyv))\t{W}(a(-iv))\frac{d^{\x}v}{|v|}.
\]
\end{lm}

\begin{proof}
The proof is similar to that of the partial Fourier inversion formula. 
Put
\[
c_n(y)
=
\zeta_\C(1)e^{-\frac{\pi}{n}(|y|+|y|^{-1})}.
\]
Note that ${\rm lim}_{n\to\infty}c_n(y)=\zeta_\C(1)$. 
We compute formally. By \lmref{L:function for regularization} and the choice of the measure on 
$\C^{\x}$, we have
\begin{align}\label{E:A_n for nonsplit}
\begin{split}
\sA_n(\rho(a(y))W,\t{W})
&=
c_n(y)
\int_{\R}\int_{\C^{\x}}
\psi_\C(xz)W(a(yz))\t{W}(a(-z))\varphi_{\R,n|y|}(x)|z|_\C^{-1}dzdx\\
&=
c_n(y)
\int_{\R}\int_{\R}\int_{\R}
\psi_\R(2ux)W(a(y(u+iv)))\t{W}(a(-u-iv))\varphi_{\R,n|y|}(x)\frac{2dudv}{(u^2+v^2)}dx\\
&=
c_n(y)
\int_\R\int_\R
W(a(y(u+iv)))\t{W}(a(-u-iv))\wh{\varphi}_{\R,4n|y|}(u)\frac{dudv}{(u^2+v^2)}.
\end{split}
\end{align} 
We write $z=u+iv$ for $u,v\in\R$ in the second line of \eqref{E:A_n for nonsplit}.
To justify the computation, we use \eqref{E:estimate Whittaker function} to find that
\begin{align*}
\begin{split}
\int_{\R}\int_{\C^{\x}}
&\psi_\C(xz)W(a(yz))\t{W}(a(-z))\varphi_{\R,n|y|}(x) |z|_\C^{-1} dzdx\\
&\ll_{\lambda',W,\t{W},\epsilon}
|y|^{1-2\lambda'-2\epsilon}
\int_\R\int_{\C^{\x}}
|z|_\C^{-2\lambda'-2\epsilon}\Phi(yz)\t{\Phi}(z)\varphi_{\R,n|y|}(x)dzdx. 
\end{split}
\end{align*}
Here $\lambda'=\lambda(\pi_\C)$ and $\Phi, \t{\Phi}$ are Schwartz functions on $\C$. 
Since we can choose $\epsilon$ so that $-2\lambda'-2\epsilon>-1$, the last integral is absolutely convergent.

To complete the proof, we show that
\begin{equation}\label{E:change limit and integral nonsplit}
\lim_{n\to\infty}
\sA_n(\rho(a(y)W),\t{W})
=
\int_\R\lim_{n\to\infty}\int_\R
W(a(y(u+iv)))\t{W}(a(-u-iv))\wh{\varphi}_{\R,4n|y|}(u)\frac{dudv}{(u^2+v^2)}.
\end{equation}
Then the assertion follows immediately from the fact that 
$\stt{\wh{\varphi}_{\R,n|y|}}_{4n\in\mathbb{N}}$ is an approximate identity as $n\to\infty$.
Put 
\begin{equation}\label{E:f_n}
f_n(y;v)
=
\int_\R
W(a(y(u+iv)))\t{W}(a(-u-iv))\wh{\varphi}_{\R,4n|y|}(u)\frac{du}{(u^2+v^2)}.
\end{equation}
We show that as a function in $v$, $f_n(y;v)$ is bounded by an integrable function on $\R$, 
which is independent of $n$. By \eqref{E:estimate Whittaker function}, there exist $Schwartz$
$functions$ $\varphi, \t{\varphi}$ $on$ $\R$ so that
\begin{equation}\label{E:estimate Whittaker for nonsplit}
W(a(y(u+iv)))\t{W}(a(-u-iv))
\ll_{\lambda',W,\t{W},\epsilon}
|y|^{1-2\lambda'-2\epsilon}(u^2+v^2)^{1-2\lambda'-2\epsilon}\varphi(yv)\t{\varphi}(v).
\end{equation}
As we will see, the function $\varphi$ is superfluous in the proof here; however, it will be 
used later. From \eqref{E:estimate Whittaker for nonsplit}, one sees immediately that
\begin{align}\label{E:estimate f_n}
\begin{split}
f_n(y;v)
&\ll_{\lambda',W,\t{W},\epsilon}
|y|^{1-2\lambda'-2\epsilon}\varphi(yv)\t{\varphi}(v)
\int_\R(u^2+v^2)^{-2\lambda'-2\epsilon}\wh{\varphi}_{\R,4n|y|}(u)du\\
&\ll_{\lambda',W,\t{W},\epsilon}
|y|^{1-2\lambda'-2\epsilon}|v^2|^{-2\lambda'-2\epsilon}\varphi(yv)\t{\varphi}(v).
\end{split}
\end{align}
Since $\Lambda(\itPi)=2\lambda'+\lambda(\pi)<1/2$, we find that $-4\lambda'-4\epsilon>-1$. 
It follows that $f_n(y;v)$ as a function of $v$ is bounded by an integrable function on $\R$, 
which is independent of $n$.
Therefore \eqref{E:change limit and integral nonsplit} follows from the dominated convergence 
theorem.
\end{proof}

As in the previous case, we first compute the integral $\sI_n(W\ot f;\t{W}\ot\t{f})$ formally.
We have
\begin{align*}
\sI_n(W\ot f;\t{W}\ot\t{f})
&=
\int_{\R^{\x}\backslash\G(\R)}\int_{{\rm SO}(2)}
\cB_{\pi_\C}(\rho(g)W,\t{W}) f(hg)\t{f}(h)\phi_{\R,n}(g)dhdg\\
&=
\int_{{\rm SO}(2)}\int_{\R^{\x}\backslash\G(\R)}
\cB_{\pi_\C}(\rho(g)W,\rho(h)\t{W}) f(g)\t{f}(h)\phi_{\R,n}(h^{-1}g)dgdh\\
&=
\int_{{\rm SO}(2)}\int_{{\rm SO}(2)}f(k)\t{f}(h)
\int_{\R^{\x}}\sA_n(\rho(a(y)k)W,\rho(h)\t{W})\mu(y)|y|^{-\frac{1}{2}}d^{\x}ydkdh.
\end{align*}
It remains to justify the computation. By the right ${\rm SO}(2)$-finiteness, we only need to show
that the integral
\begin{equation}\label{E:integrand for A_n nonsplit}
\int_{\R^{\x}}\sA_n(\rho(a(y))W,\t{W})\mu(y)|y|^{-\frac{1}{2}}d^{\x}y
\end{equation}
is absolutely convergent. Let $|\mu(y)|=|y|^\lambda$ for some $\lambda\in\R$. Note that 
$\lambda(\pi)=|\lambda|$. Recall that $\lambda'=\lambda(\pi_\C)$ and that 
$\Lambda(\itPi)=2\lambda'+|\lambda|<1/2$. Let $f_n(y;v)$ be the function given by \eqref{E:f_n}.
Then by \eqref{E:A_n for nonsplit} and \eqref{E:estimate f_n}, we find that 
\begin{align*}
\sA_n(\rho(a(y))W,\t{W})\mu(y)|y|^{-\frac{1}{2}}
&=
c_n(y)\mu(y)|y|^{-\frac{1}{2}}\int_\R f_n(y;v)dv\\
&\ll_{\lambda',\lambda,W,\t{W},\epsilon}
|y|^{\frac{1}{2}-2\lambda'+\lambda-2\epsilon}\int_\R
|v|^{-4\lambda'-4\epsilon}\varphi(yv)\t{\varphi}(v)dv.
\end{align*}
Note that the integrand of \eqref{E:integrand for A_n nonsplit} is bounded by an integrable 
function in $y\in\R^{\x}$, which is independent of $n$. Indeed, we have
\[
\int_{\R^{\x}}|y|^{\frac{1}{2}-2\lambda'+\lambda-2\epsilon}\int_\R
|v^2|^{-2\lambda'-2\epsilon}\varphi(yv)\t{\varphi}(v)
dvd^{\x}y
=
\int_{\R^{\x}}\int_{\R^{\x}}
|y|^{\frac{1}{2}-2\lambda'+\lambda-2\epsilon}
|v|^{\frac{1}{2}-2\lambda'-\lambda-2\epsilon}\varphi(y)\t{\varphi}(v)
d^{\x}vd^{\x}y
\] 
after changing the variables. Applying the dominated convergent theorem and the right 
${\rm SO}(2)$-finiteness, we can exchange the limit and the integrals and we obtain
\[
\lim_{n\to\infty}
\sI_n(W\ot f;\t{W}\ot\t{f})
=
\int_{{\rm SO}(2)}\int_{{\rm SO}(2)}f(k)\t{f}(h)\int_{\R^{\x}}\lim_{n\to\infty}
\sA_n(\rho(a(y)k)W,\rho(h)\t{W})\mu(y)|y|^{-\frac{1}{2}}d^{\x}ydkdh,
\]
which by \lmref{L:main lemma for nonsplit} is equal to
\[
\zeta_\C(1)\int_{{\rm SO}(2)}\int_{{\rm SO}(2)}f(k)\t{f}(h)\int_{\R^{\x}}\int_{\R^\x}
W(a(iy)k)\t{W}(a(-iv)h)\mu(yv^{-1})|yv^{-1}|^{-\frac{1}{2}}d^{\x}yd^{\x}vdkdh.
\]
This is precisely the RHS of \eqref{E:limit to RankinSelberg} and hence the case where $\cK=\C$ is proved. 

This finishes the proof of \propref{P:regularization} for the archimedean case.

\appendix
\section{Whittaker functions of $\G$ over archimedean local fields}
\label{S:appendix}
The purpose of this appendix is to prove \propref{P:test vector in Whittaker model} and the equation
\eqref{E:estimate Whittaker function} for the archimedean case. 
So we let $F=\R$ or $\C$ in this appendix. Let $\psi$ be a non-trivial additive character of $F$ and 
$\pi$ be an irreducible admissible $generic$ representation of $\G(F)$ with the minimal weight $k\geq 0$. 
We may assume $\pi$ to be a constituent of $\rho(\mu,\nu)$ for some characters $\mu, \nu$.

 \subsection{Whittaker functional}
Let $(r_\psi,\cS(F^2))$ be the Weil representation of $\G(F)$ defined in 
\cite[Section 1]{JLbook} and let $\cS(F^2,\psi)$ be the subspace defined by \eqref{E:algebraic Schwartz space}.
For $\Phi\in\cS(F^2)$, we define its partial Fourier transform by 
\begin{equation}\label{E:partial Fourier transform}
\Phi^\sim(x,y)
=
\int_F\Phi(x,u)\psi(yu)du.
\end{equation}
Here $du$ is self-dual with respect to $\psi$. 
Note that $\cS(F^2,\psi)$ is invariant under the
partial Fourier transform and we have $\left(r_\psi(g)\Phi\right)^\sim=\rho(g)\Phi^\sim$
(\cite[Proposition 1.6]{JLbook}).
For $\Phi\in\cS(F^2)$ and $g\in\G(F)$, we define 
\begin{equation}\label{E:integral representation for Whittaker functions}
W_\Phi(g)
=
\mu({\rm det}(g))|{\rm det}(g)|_F^{\frac{1}{2}}
\int_{F^{\x}}
r_\psi(g)\Phi(t,t^{-1})\mu\nu^{-1}(t)d^{\x}t.
\end{equation}
Then we have $W_\Phi\in\cW(\psi)$ in the notation of \S\ref{SS:Whittaker model}.
Following \cite{JLbook}, we set  $\cW(\mu,\nu ; \psi)=\stt{W_\Phi\mid \Phi\in\cS(F^2,\psi)}$.
Then $\cW(\mu,\nu ; \psi)=\cW(\nu,\mu ; \psi)$ by \cite[Proposition 3.4]{JLbook}. Moreover,  
by \cite[Lemmas 5.13.1, 6.3.1]{JLbook}, if $|\mu\nu^{-1}(y)|=|y|_F^r$ for some $r>-1$, 
then $L(f_{\Phi^\sim}):=W_{\Phi}$
\footnote{Our $L$ is the $inverse$ of $A$ in \cite[Lemmas 5.13.1, 6.3.1]{JLbook}.}
defines an $\G(F)$-isomorphism 
from  $\rho(\mu,\nu)$  onto $\cW(\mu, \nu ; \psi)$, where $f_\Phi=f^{(s)}_\Phi|_{s=1/2}$ is the Godement section 
defined in \S\ref{SSS:Godement section and intertwining operator}. When $r\leq -1$, 
the map $L$ still gives rise to an $\G(F)$-isomorphism when $\rho(\mu,\nu)$ is irreducible, 
but now one needs to use the analytic continuation of the Godement sections. 
In particular, $\cW(\mu,\nu ; \psi)=\cW(\pi,\psi)$ if $\pi\cong\pi(\mu,\nu)$ is a principal series representation. 
On the other hand, if $\pi$ is a discrete series representation of $\G(\R)$ (so that $k\geq 2$), then
$\cW(\pi,\psi)\subset\cW(\mu,\nu ; \psi)$ and $W_\Phi\in\cW(\pi,\psi)$ if and only if 
\[
\int_\R x^n\frac{\partial^m}{\partial y}\Phi(x,0)dx=0
\]
for every non-negative integers $n,m$ with $m+n=k-2$ (\cite[Corollary 5.14]{JLbook}).

In the following, we assume that $\psi$ is given by \eqref{E:additive character of archimedean}.
We also fix the choices of Haar measures $dt$ and $d^{\x}t$ as follows.
Let $dt$ be the usual Lebesgue measure on $\R$ and be $twice$ of the 
usual Lebesgue measure on $\C$. On $F^{\x}$, we take $d^{\x}t=\zeta_F(1)|t|_F^{-1}dt$.
Recall that 
\[
\cS(\R^2,\psi)
=
\bigoplus_{a,b\geq 0}\C\,x_1^a x_2^b e^{-\pi(x_1^2+x_2^2)}
\quad\text{and}\quad
\cS(\C^2,\psi)
=
\bigoplus_{a,b,c,d\geq 0}\C\,x_1^a\b{x}_1^bx_2^c\b{x}_2^d e^{-2\pi(x_1\b{x}_1+x_2\b{x}_2)}.
\]
The following lemma is the key of this appendix.

\begin{lm}\label{L:kirillov model for archimedean}
Let notations be as above.
\begin{itemize}
\item[(1)]
Suppose that $F=\R$ and $\mu\nu^{-1}=|\cdot|^{2s}{\rm sgn}^k$ for some $s\in\C$ and $k\in\Z$. 
Let $\Phi(x_1,x_2)=x_1^ax_2^be^{-\pi(x_1^2+x_2^2)}$ for some integers $a,b\geq 0$. Then
\[
W_\Phi\left(\pMX y001\right)
=
\begin{cases}
2\mu(y)|y|^{\frac{1}{2}-s-\frac{a-b}{2}}y^a K_{s+\frac{a-b}{2}}(2\pi|y|)
\quad&\text{if $a-b\equiv k\pmod 2$},\\
0
\quad&\text{otherwise}.
\end{cases}
\]
\item[(2)]
Suppose that $F=\C$ and $\mu\nu^{-1}(y)=|y|_\C^{2s}\left(y/|y|_\C^{\frac{1}{2}}\right)^k$ for some $s\in\C$ and $k\in\Z$. 
Let $\Phi(x_1,x_2)=x_1^a\b{x}_1^bx_2^c\b{x}_2^de^{-2\pi(x_1\b{x}_1+x_2\b{x}_2)}$ for some 
integers $a,b,c,d\geq 0$. Then
\[
W_\Phi\left(\pMX y001\right)
=
\begin{cases}
4\mu(y)|y|_\C^{\frac{1}{2}-s-\frac{a+b-c-d}{4}}y^a\b{y}^b K_{2s+\frac{a+b-c-d}{2}}\left(4\pi|y|_\C^{\frac{1}{2}}\right)
\quad&\text{if $a-b-c+d+k=0$},\\
0\quad&\text{otherwise}.
\end{cases}
\]
\end{itemize}
\end{lm}

\begin{proof}
Let $\alpha,\beta$ be positive real numbers and $s\in\C$. We first show that 
\begin{equation}\label{E:integral to K-bessel function}
\int_0^\infty e^{-\alpha(\beta r^2+r^{-2})}r^{s-1} dr
=
\beta^{-\frac{s}{4}}K_{\frac{s}{2}}\left(2\alpha\beta^{\frac{1}{2}}\right).
\end{equation}
Here $dr$ is the usual Lebesgue measure on $\R_{>0}$ and 
$\beta^{1/2}$ and $\beta^{1/4}$ are positive real numbers.
By changing the variable $r^2\mapsto r$, we find that
\[
\int_0^\infty e^{-\alpha(\beta r^2+r^{-2})}r^{s-1} dr
=
\frac{1}{2}\int_0^\infty e^{-\alpha(\beta r+r^{-1})}r^{\frac{s}{2}-1}dr
=
\frac{1}{2}\int_0^\infty 
e^{-\alpha\beta^{\frac{1}{2}}\left(\beta^{\frac{1}{2}} r+\beta^{-\frac{1}{2}} r^{-1}\right)}r^{\frac{s}{2}-1}dr.
\]
\eqref{E:integral to K-bessel function} now follows from changing the variable 
$\beta^{\frac{1}{2}}r\mapsto r$ again and  \eqref{E:K-bessel function}.

Recall that for $y\in F^{\x}$, we have 
\[
r_\psi\left(\pMX y001\right)\Phi(x_1,x_2)=\Phi(yx_1,x_2).
\]
It follows that $\Phi(yt,t^{-1})$ is equal to
\[
y^a t^{a-b}e^{-\pi(y^2 t^2+ t^{-2})}
\quad\text{or}\quad
y^a\b{y}^b t^{a-c}\b{t}^{b-d}e^{-2\pi(|yt|_\C+|t|_\C^{-1})}
\]
according to $F=\R$ or $\C$. We only compute the case $F=\C$ as the other case is similar (and easier).
Note that $d^{\x}t=2\pi^{-1}d^{\x}rd\theta$ if we use the polar coordinate $t=re^{i\theta}$. 
Here $d^{\x}r=\frac{dr}{r}$ and $dr$ is the usual Lebesgue measure on $\R_{>0}$. 
Let $\beta=|y|_\C$. By definition, we have
\begin{align*}
W_\Phi\left(\pMX y001\right)
&=
\mu(y)|y|^{\frac{1}{2}}\int_{\C^{\x}} \Phi(yt,t^{-1})\mu\nu^{-1}(t)d^{\x}t\\
&=
\mu(y)|y|_\C^{\frac{1}{2}}y^a\b{y}^b
\int_0^{2\pi} e^{i(a-b-c+d+k)\theta}d\theta
\int_0^\infty e^{-2\pi(\beta r^2+r^{-2})}r^{4s+a+b-c-d-1}2\pi^{-1}dr.
\end{align*}
Now the assertion follows from \eqref{E:integral to K-bessel function}. This completes the proof.
\end{proof}

\subsection{Explicit formulae}\label{A:explicit formulae}
Now we prove \propref{P:test vector in Whittaker model}. We have two cases depending on $F=\R$ or $\C$.

\subsubsection{$F=\R$}
Suppose that $F=\R$. As we mentioned, the proof of the first case is given by \cite[Lemma 3.3]{CCI2019}  
so that we assume in the following that $\pi\cong\pi(\mu,\nu)$ is a principal series representation. 
Let $n$ be a non-negative integer and $\Phi_n\in\cS(\R^2,\psi)$ be the element such that
\[
\Phi_n(x_1,x_2)^\sim
=
\left(x_1+ix_2\right)^ne^{-\pi(x_1^2+x_2^2)}.
\] 
It follows from the relation $\left(r_\psi(g)\Phi\right)^\sim=\rho(g)\Phi^\sim$ that
$r_\psi(k(\theta))\Phi_n(x_1,x_2)=e^{in\theta}\Phi_n(x_1,x_2)$. 
Suppose that $n\equiv k\pmod 2$.  We first show that $W_{\Phi_n}$ defines a non-zero weight $n$ element in
$\cW(\pi,\psi)$, i.e. $W_{\Phi_n}$ is a non-zero generator of $\cV_\psi(\pi,n)$. Indeed, we have 
\[
\rho(k(\theta))W_{\Phi_n}=W_{r_\psi(k(\theta))\Phi_n}=e^{i k\theta}W_{\Phi_n}
\]
by the above observation. It remains to show that $W_{\Phi_n}$ is non-zero. 
Let $\mu\nu^{-1}=|\cdot|^{2s}{\rm sgn}^k$ for some $s\in\C$. 
Since $\pi$ is a principal series representation, the map $L$ is an $\G(F)$-isomorphism and we have
$L(f_{\Phi_n^\sim})=W_{\Phi_n}$. To show that $W_{\Phi_n}\neq 0$, it suffices to show that $f_{\Phi_n^\sim}\neq 0$.
However, one checks easily that $f_{\Phi_n^\sim}(I_2)=(\sqrt{-1})^n\zeta_\R(2s+n+1)\neq 0$.
We now compute $\Phi_k$ and $\Phi_2$. By easy computations, we find that
\[
\Phi_k(x_1,x_2)
=
(x_1+x_2)^k e^{-\pi(x_1^2+x_2^2)}
\quad
\text{and}
\quad
\Phi_2(x_1,x_2)
=
(x_1^2+2x_1x_2+x_2^2-1/2\pi) e^{-\pi(x_1^2+x_2^2)}.
\]
It then follows from \lmref{L:kirillov model for archimedean} $(1)$ that
$W_\pi$ in \propref{P:test vector in Whittaker model} $(2)$ is equal to $2^{-1} W_{\Phi_k}$.
On the other hand, if $k=0$, then we have
\[
\rho(V_+)W_\pi=2^{-1}\rho(V_+)W_{\Phi_0}=2^{-1} W_{r_\psi(V_+)\Phi_0}
\]
and by a direct calculation that $r_\psi(V_+)\Phi_0=-2\pi\Phi_2$. 
This shows \propref{P:test vector in Whittaker model} $(2)$.

\subsubsection{$F=\C$}
Suppose that $F=\C$ and $\pi\cong\pi(\mu,\nu)$ is a principal series representation.
Since $\pi(\mu,\nu)\cong\pi(\nu,\mu)$, we may assume that 
$\mu\nu^{-1}(e^{i\theta})=e^{ik\theta}$ by the uniqueness of the Whittaker model. 
Let $n\geq k$ be an integer with the same parity. 
We describe how to construct a non-zero element in $\cV_\psi(\pi,n)$ following \cite[Section 18]{JLbook2}.
The construction is similar to the case when $F=\R$.
Write $n=k+2m$ for some non-negative integer $m$ and put
\[
\vec{\Psi}_{k,n}(x_1,x_2)
=
(x_2X-x_1Y)^m(\b{x}_1X+\b{x}_2Y)^{k+m} e^{-2\pi(x_1\b{x}_1+x_2\b{x}_2)}\in\cS(\C^2,\psi)\ot\cL_n(\C).
\]
One checks easily that 
\begin{equation}\label{E:SU(2)-equivalent for Schwartz function}
\vec{\Psi}_{k,n}((x_1,x_2)u)=\rho_n(u^{-1})\vec{\Psi}_{k,n}((x_1,x_2))
\end{equation}
for every $u\in{\rm SU}(2)$. 
The partial Fourier transform \eqref{E:partial Fourier transform} can be extended to 
$\cS(\C^2,\psi)\ot\cL_n(\C)$ by performing the transform on the coefficients. 
Let $\vec{\Phi}_{k,n}\in\cS(\C^2,\psi)\ot\cL_n(\C)$ so that $\vec{\Phi}_{k,n}^\sim=\vec{\Psi}_{k,n}$.
The Weil representation $(r_\psi,\cS(\C^2))$ of $\G(\C)$ can be extended to the space $\cS(\C^2)\ot\cL_n(\C)$ in a 
similar way as well as the integration \eqref{E:integral representation for Whittaker functions} and the Godement section
defined in \S\ref{SSS:Godement section and intertwining operator}. By \eqref{E:SU(2)-equivalent for Schwartz function},
we see that 
\[
f_{\vec{\Psi}_{k,n}}:=f^{(s)}_{\vec{\Psi}_{k,n}}|_{s=1/2}\in\left(\cB(\mu,\nu)\ot\cL_n(\C)\right)^{{\rm SU}(2)}.
\]
Moreover, $f_{\vec{\Psi}_{k,n}}(I_2)\neq 0$ by \cite[Lemma 18.4]{JLbook2} and the assumption that 
$\pi\cong\pi(\mu,\nu)$ is a principal series representation. 
It follows that $W_{\vec{\Phi}_{k,n}}:=L(f_{\vec{\Psi}_{k,n}})$ defines a non-zero
element in $\cV_\psi(\pi,n)$. More precisely, $W_{\vec{\Phi}_{k,n}}$ is given by the integral 
\eqref{E:integral representation for Whittaker functions} with $\Phi$ replaced by $\vec{\Phi}_{k,n}$.
To prove \propref{P:test vector in Whittaker model} $(3)$, it remains to compute $\vec{\Phi}_{k,k}$ and $\vec{\Phi}_{0,2}$.
By routine calculations, we find that
\begin{equation}\label{E:vector value Schwartz function}
\vec{\Phi}_{k,k}(x_1,x_2)
=
\sum_{j=0}^k
\begin{pmatrix}
k\\j
\end{pmatrix}
\left(-\sqrt{-1}\right)^{k-j}
\b{x}_1^j x_2^{k-j}e^{-2\pi(x_1\b{x}_1+x_2\b{x}_2)}X^j Y^{k-j}
\end{equation}
and
\begin{align}\label{E:vector value Schwartz function 2}
\begin{split}
\vec{\Phi}_{0,2}(x_1,x_2)
=
&\left(-\sqrt{-1}\right)\b{x}_1\b{x}_2e^{-2\pi(x_1\b{x}_1+x_2\b{x}_2)}X^2
+
\left(\frac{1}{2\pi}-x_1\b{x}_1-x_2\b{x}_2\right)e^{-2\pi(x_1\b{x}_1+x_2\b{x}_2)}XY\\
&+\left(\sqrt{-1}\right)x_1x_2 e^{-2\pi(x_1\b{x}_1+x_2\b{x}_2)}Y^2.
\end{split}
\end{align}
It then follows from \lmref{L:kirillov model for archimedean} $(2)$ that 
$\vec{W}_\pi=2^{-2}\mu\nu(-1)\left(\sqrt{-1}\right)^k W_{\vec{\Phi}_{k,k}}$.
This shows \propref{P:test vector in Whittaker model} $(3)$.
Similarly when $k=0$, we have $\vec{W}_\pi^{(2)}=2^{-2}W_{\vec{\Phi}_{0,2}}$. This 
completes the proof of \propref{P:test vector in Whittaker model}.

\subsection{Majorization of Whittaker functions}\label{A:majorization of Whittaker functions}
We prove \eqref{E:estimate Whittaker function} when $F=\R$ or $\C$. We begin with a lemma.

\begin{lm}\label{L:estimate modified bessel function}
Let $s\in\C$ with $|{\rm Re}(s)|=r$. Let $r_1>0$ such that $r_1\geq r$. 
Let $\varphi$ be a function on $\R^{\x}$ defined by $\varphi(y)=y^{r_1}K_s(|y|)$. Then we have
\[
\varphi(y)\ll_{r,r_1} {\rm exp}\left(-\frac{1}{2}|y|\right).
\]
\end{lm}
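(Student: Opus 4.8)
The plan is to bound $|\varphi(y)|=|y|^{r_1}\,|K_s(|y|)|$ by first reducing the complex order $s$ to the real order $r$, and then treating the regimes $|y|\le 1$ and $|y|\ge 1$ separately, in both cases via the integral representation \eqref{E:K-bessel function}. So it suffices to work with $y>0$. Taking absolute values inside \eqref{E:K-bessel function} and using $|t^{s-1}|=t^{\mathrm{Re}(s)-1}$ together with $K_a=K_{-a}$ gives, for every $z>0$,
\[
|K_s(z)|\;\le\;\frac12\int_0^\infty\exp\!\left(-\tfrac z2\bigl(t+t^{-1}\bigr)\right)t^{\,\mathrm{Re}(s)-1}\,dt\;=\;K_{\mathrm{Re}(s)}(z)\;=\;K_r(z),
\]
so it is enough to prove $y^{r_1}K_r(y)\ll_{r,r_1}e^{-y/2}$ for all $y>0$.

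For $y\ge 1$ I would exploit the elementary inequality $t+t^{-1}\ge 2$: factoring $\exp(-\tfrac y2(t+t^{-1}))=\exp(-\tfrac{y-1}{2}(t+t^{-1}))\exp(-\tfrac12(t+t^{-1}))$ and bounding the first factor by $\exp(-(y-1))$ yields $K_r(y)\le e\,K_r(1)\,e^{-y}$. Since $\sup_{y\ge 0}y^{r_1}e^{-y/2}=(2r_1/e)^{r_1}<\infty$, this gives $y^{r_1}K_r(y)\le e\,K_r(1)\,(2r_1/e)^{r_1}\,e^{-y/2}$ on $[1,\infty)$. For $0<y\le 1$ one has $e^{-y/2}\ge e^{-1/2}$, so it is enough to show that $y^{r_1}K_r(y)$ is bounded on $(0,1]$; I would obtain this from \eqref{E:K-bessel function} by splitting the $t$-integral at $t=1$. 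When $r>0$, the part over $(0,1)$ is $\le\int_0^1 t^{r-1}\,dt=1/r$ and the part over $(1,\infty)$ is $\le(2/y)^r\Gamma(r)$, so $K_r(y)\ll_r y^{-r}$ and hence $y^{r_1}K_r(y)\ll_r y^{r_1-r}\le 1$ because $r_1\ge r$. When $r=0$ the same splitting (now the two halves agree under $t\mapsto t^{-1}$) gives $K_0(y)\ll 1+|\log y|$, whence $y^{r_1}K_0(y)\ll y^{r_1}(1+|\log y|)\ll_{r_1}1$ since $r_1>0$. Alternatively one may simply invoke the standard small-argument asymptotics $K_r(y)\sim\frac12\Gamma(r)(y/2)^{-r}$ for $r>0$ and $K_0(y)\sim-\log(y/2)$. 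Combining the two ranges proves the lemma.

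There is no serious obstacle here; the only points requiring a little care are keeping the estimates uniform in $s$ with $\mathrm{Re}(s)$ fixed — handled by the reduction $|K_s|\le K_r$ — and not losing a logarithmic factor at the origin in the borderline case $r_1=r>0$, which is why one must use the sharper bound $K_r(y)\ll_r y^{-r}$ (rather than $y^{-r}(1+|\log y|)$) when $r>0$.
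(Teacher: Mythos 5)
Your proof is correct and follows essentially the same route as the paper's: reduce to the real order $r$ via $|K_s(z)|\le K_{\operatorname{Re}(s)}(z)=K_r(z)$, then use the exponential decay of $K_r$ at infinity and the $y^{-r}$ (resp.\ logarithmic) blow-up at $0$. The only difference is cosmetic: the paper simply quotes the standard asymptotics of $K_r$ near $0$ and $\infty$, whereas you rederive the needed upper bounds directly from the integral representation \eqref{E:K-bessel function}, and you correctly flag the borderline case $r_1=r>0$ where one must have the clean $y^{-r}$ bound rather than one carrying a spurious logarithm.
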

\begin{proof}
Since $K_s(z)=K_{-s}(z)$ and $K_s(|y|)\ll K_{{\rm Re}(s)}(|y|)$, we may assume that $s=r$. 
Then the lemma follows immediately from the asymptotic form of $K_r(|y|)$. Indeed, when 
$|y|\to\infty$, we have
\[
K_r(|y|)\sim\sqrt{\frac{\pi}{2|y|}}e^{-|y|},
\]
while when $0<|y|\ll 1$, we have
\[
K_r(|y|)\sim
\begin{cases}
-{\rm ln}\left(\frac{|y|}{2}\right)-\gamma
\quad&\text{if $r=0$},\\
\frac{\Gamma(r)}{2}\left(\frac{2}{|y|}\right)^r
\quad&\text{if $r>0$}.
\end{cases}
\]
Here $\gamma$ stands for the Euler's constant.
This finishes the proof.
\end{proof} 

\begin{prop}\label{P:estimate whittaker function}
Suppose that $\pi$ is unitary. Let $\epsilon>0$ and $W\in\cW(\pi,\psi)$. Then
\[
W\left(\pMX y001 h\right)
\ll_{\pi,W,\epsilon}
|y|_F^{\frac{1}{2}-\lambda(\pi)-\epsilon}
{\rm exp}\left(-d|y|_F^\frac{1}{d}\right),
\]
for every $h\in K$. Here $d=[F:\R]$.
\end{prop}

\begin{proof}
Suppose that $F=\C$ and $\pi\cong\pi(\mu,\nu)$ with $\mu\nu^{-1}(y)=|y|_\C^{2s}(y/|y|_\C^{\frac{1}{2}})^k$ for some 
$s\in\C$ and $k\in\Z$. By the right $K$-finiteness, it suffices to prove the assertion when $h$ is the identity element.
By \lmref{L:kirillov model for archimedean}, we may assume
\[
W\left(\pMX y001\right)
=
\mu(y)|y|_{\C}^{\frac{1}{2}-s-\frac{a+b-c-d}{4}}y^{a}\cdot \b{y}^{b}
\cdot
K_{2s+\frac{a+b-c-d}{2}}\left(4\pi |y|_{\C}^{\frac{1}{2}}\right).
\]
Here $a,b,c,d$ are non-negative integers. Note that $|\mu(y)|=|y|_\C^{{\rm Re}(s)}$ by \eqref{E:unitary character}. 
Let $|y|_\C=r^2$ for some $r>0$.
We have 
\[
W\left(\pMX y001\right)
\ll 
r^{1+\frac{a+b+c+d}{2}}K_{2{\rm Re}(s)+\frac{a+b-c-d}{2}}(4\pi r).
\]
Since $\pi$ is unitary, we have $\lambda(\pi)=|{\rm Re}(s)|$. In particular, $\lambda(\pi)-{\rm Re}(s)\geq 0$.
Let $\epsilon>0$. We have 
\begin{align*}
r^{1+\frac{a+b+c+d}{2}}K_{2{\rm Re}(s)+\frac{a+b-c-d}{2}}(4\pi r)
&=
r^{1-2\lambda(\pi)-2\epsilon}\cdot r^{2\lambda(\pi)+\frac{a+b+c+d}{2}+2\epsilon}
K_{2{\rm Re}(s)+\frac{a+b-c-d}{2}}(4\pi r)\\
&\ll_{a,b,c,d,\lambda(\pi),\epsilon}
r^{1-2\lambda(\pi)-2\epsilon}e^{-2r}
\end{align*}
by \lmref{L:estimate modified bessel function}. This proves the case where $F=\C$.

Suppose that $F=\R$. If $\pi$ is a discrete series representation, then the assertion follows from 
\propref{P:test vector in Whittaker model} $(1)$. 
If $\pi$ is a principal series representation, then it can be proved in a similar way.
\end{proof}

\bibliographystyle{alpha}
\bibliography{ref}

\newcommand{\etalchar}[1]{$^{#1}$}
\begin{thebibliography}{AKM{\etalchar{+}}19}

\bibitem[AKM{\etalchar{+}}19]{AKMSS2019}
U.K. Anandavardhanan, R.~Kurinczuk, N.~Matringe, V.~Secherre, and S.~Stevens.
\newblock Galois self-dual cuspidal type and asai local factors.
\newblock arXiv:1807.07755v3, 2019.

\bibitem[Asa77]{Asai1977}
T.~Asai.
\newblock {On certain Dirichlet series associated with Hilbert modular forms
  and Rankin's method}.
\newblock {\em Mathematische Annalen}, 226:81--94, 1977.

\bibitem[Ban97]{Banks1997}
W.~Banks.
\newblock {Twisted symmetric-square $L$-functions and the nonexistence of
  Siegel zeros on ${\rm GL}(3)$ }.
\newblock {\em Duke Mathematical Journal}, 87(2):343--353, 1997.

\bibitem[BGT07]{BurqGerardTzvetkov2007}
N.~Burq, P.~Gerard, and N.~Tzvetkov.
\newblock {Restrictions of the Laplace-Beltrami eigenfunctions to
  submanifolds}.
\newblock {\em Duke Mathematical Journal}, 138(3):445--486, 2007.

\bibitem[BI00]{ConreyIwnaiec2000}
Conrey~J. B. and H.~Iwaniec.
\newblock {The cubic moment of central values of automorphic $L$-functions}.
\newblock {\em Annals of Mathematics}, 151:1175--1216, 2000.

\bibitem[Blo12]{Blomer2012}
V.~Blomer.
\newblock {Subconvexity for twisted $L$-functions on ${\rm GL}(3)$}.
\newblock {\em American Journal of Mathematics}, 134(5):1385--1421, 2012.

\bibitem[Blo13]{Blomer2013}
V.~Blomer.
\newblock {On the 4-norm of an automorphic form}.
\newblock {\em Journal of European Mathematical Society}, 15:1825--1852, 2013.

\bibitem[BM15]{BlomerMaga2015}
V.~Blomer and P.~Maga.
\newblock {The sup-norm problem for ${\rm PGL}(4)$}.
\newblock {\em International Mathematics Research Notices}, (14):5311--5332,
  2015.

\bibitem[BP19]{Beuzart-Plessis2019}
R.~Beuzart-Plessis.
\newblock {Archimedean theory and $\epsilon$-factor for the Asai Rankin-Selberg
  integrals}.
\newblock arXiv:1812.00053v2, 2019.

\bibitem[Cas73]{Casselman1973}
W.~Casselman.
\newblock {On some results of Atkin and Lehner}.
\newblock {\em Mathematische Annalen}, 201:301--314, 1973.

\bibitem[CC19]{CC2019}
S.~Y. Chen and Y.~Cheng.
\newblock {On Deligne's conjecture for certain automorphic $L$-functions for
  ${\rm GL}(3)\times{\rm GL}(2)$ and ${\rm GL}(4)$}.
\newblock {\em Documenta Mathematica}, 24:2241--2297, 2019.

\bibitem[CCI20]{CCI2020}
S.~Y. Chen, Y.~Cheng, and I.~Ishikawa.
\newblock {Gamma factors for Asai representations of ${\rm GL}(2)$}.
\newblock {\em Journal of Number Theory}, 209:83--146, 2020.

\bibitem[Che19]{S-YChen2019}
Shih-Yu Chen.
\newblock Pullback formulae for nearly holomorphic saito--kurokawa lifts.
\newblock {\em Manuscripta Mathematica}, 2019.

\bibitem[Fli88]{Flicker1988}
Y.~Flicker.
\newblock {Twisted tensor and Euler products}.
\newblock {\em Bull. Soc. Math. France}, 116(3):295--313, 1988.

\bibitem[Fli93]{Flicker1993}
Y.~Flicker.
\newblock {On zeros of the twisted tensor $L$-function}.
\newblock {\em Mathematische Annalen}, 297(2):199--219, 1993.

\bibitem[Gan]{WTGnote}
W.~T. Gan.
\newblock The shimura correspondence a la waldspurger.

\bibitem[Gan08]{WTG2008}
W.~T. Gan.
\newblock Trilinear forms and triple product epsilon factors.
\newblock {\em International Mathematics Research Notices}, 2008.

\bibitem[Gar87]{Garrett1987}
P.~Garrett.
\newblock {Decomposition of Eisenstein series: Rankin Triple Products}.
\newblock {\em Annals of Mathematics}, 125(2):209--235, 1987.

\bibitem[GI07]{Table2007}
I.S. Gradshteyn and I.M.Ryzhik.
\newblock {\em Table of integrals, series,and products}.
\newblock Academic Press, seventh edition, 2007.

\bibitem[GJ78]{GelbartJacquet1978}
S.~Gelbart and H.~Jacquet.
\newblock {A relation between automorphic representations of ${\rm GL}$(2) and
  ${\rm GL}$(3)}.
\newblock {\em Ann. Scient. Ecole Norm. Sup.(4)}, 11(4):471--542, 1978.

\bibitem[GJ79]{GelbartJacquet1979}
S.~Gelbart and H.~Jacquet.
\newblock {Forms of ${\rm GL}(2)$ from the analytic point of view}.
\newblock In {\em {Automorphic forms, representations, and $L$-functions}},
  volume~33 of {\em {Preceedings of Symposia in Pure Mathematics}}, pages
  213--251, 1979.
\newblock part 1.

\bibitem[GK92]{GrossKudla1992}
B.~Gross and S.~Kudla.
\newblock {Heights and the central critical values of triple product
  $L$-functions}.
\newblock {\em Compositio Mathematica}, 81(2):143--209, 1992.

\bibitem[Hid93]{Hida1993}
H.~Hida.
\newblock {$p$-ordinary cohomology groups for {\rm SL}(2) over number fields}.
\newblock {\em Duke Mathematical Journal}, 69(2):259--314, 1993.

\bibitem[HK91]{HarrisKudla1991}
M.~Harris and S.~Kudla.
\newblock {The central critical value of a triple product $L$-function}.
\newblock {\em Annals of Mathematics}, 133(3):605--672, 1991.

\bibitem[HK04]{HarrisKudla2004}
M.~Harris and S.~Kudla.
\newblock {\em On a conjecture of Jacquet}, pages 355--371.
\newblock Johns Hopkins Univ. Press, Baltimore, MD, 2004.
\newblock (Reviewer: Henry H. Kim).

\bibitem[HL94]{HoffsteinLockhart1994}
J.~Hoffstein and P.~Lockhart.
\newblock {Coefficients of Maass forms and the Siegel zero}.
\newblock {\em Annals of Mathematics}, 140(1):161--176, 1994.

\bibitem[Hsi]{ML2017}
M.~L. Hsieh.
\newblock {Hida families and $p$-adic triple $L$-functions}.
\newblock {\em American Journal of Mathematics}, to appear.

\bibitem[Hu17]{Hu2017}
Y.~K. Hu.
\newblock {Triple product formula and the subconvexity bound of Triple product
  $L$ -function in level aspect}.
\newblock {\em American Journal of Mathematics}, 139(1):215--159, February
  2017.

\bibitem[Ich08]{Ichino2008}
A.~Ichino.
\newblock {Trilinear forms and the central values of triple product
  $L$-functions}.
\newblock {\em Duke Mathematical Journal}, 145(2):281--307, 2008.

\bibitem[II10]{IchinoIkeda2010}
A.~Ichino and T.~Ikeda.
\newblock {On the periods of automorphic forms on special orthogonal groups and
  the Gross-Prasad conjecture}.
\newblock {\em Geometric and Functional Analysis}, 19:1378--1425, 2010.

\bibitem[IK04]{IwaniecKowalski2004}
H.~Iwaniec and E.~Kowalski.
\newblock {\em {Analytic Number theory}}, volume~53.
\newblock American Mathematical Society Colloquium Publications, 2004.

\bibitem[Ike89]{Ikeda1989}
T.~Ikeda.
\newblock {On the functional equations of the triple product $L$-functions}.
\newblock {\em Journal of Mathematics of Kyoto University}, 29(2):175--219,
  1989.

\bibitem[Ike92]{Ikeda1992}
T.~Ikeda.
\newblock {On the location of poles of the triple $L$-functions}.
\newblock {\em Compositio Mathematica}, 83(2):187--237, 1992.

\bibitem[ILS00]{IwaniecLuoSarnak2000}
H.~Iwaniec, W.~Luo, and P.~Sarnak.
\newblock {Low lying zeros of fimilies of $L$-functions}.
\newblock {\em Publ. Math. Inst. Hautes Etudes Sci.}, 91:55--131, 2000.

\bibitem[IP]{IchinoPrasanna}
A.~Ichino and K.~Prasanna.
\newblock {Period of quaternionic Simura varieties. I.}
\newblock {\em Memoirs of the American Mathematical Society}.
\newblock to appear.

\bibitem[Ish17]{Ishikawa2017}
I.~Ishikawa.
\newblock {\em {On the construction of twisted triple product $p$-adic
  $L$-functions}}.
\newblock PhD thesis, Kyoto University, 2017.

\bibitem[Jac72]{JLbook2}
H.~Jacquet.
\newblock {\em {Automorphic Forms on ${\rm GL}(2)$ {\rm II}}}, volume 278.
\newblock Springer-Verlag, Berlin and New York, 1972.

\bibitem[JL70]{JLbook}
H.~Jacquet and R.~Langlands.
\newblock {\em Automorphic Forms on ${\rm GL}(2)$}, volume 114 of {\em Lecture
  Notes in Mathematics}.
\newblock Springer-Verlag, Berlin and New York, 1970.

\bibitem[JPSS83]{JPSS1983}
H.~Jacquet, I.~Piatetski-Shapiro, and J.~Shalika.
\newblock {Rankin-Selberg convolutions}.
\newblock {\em American Journal of Mathematics}, 105(2):367--464, 1983.

\bibitem[Kab04]{Kable2004}
A~Kable.
\newblock {Asai $L$-functions and Jacquet's conjecture}.
\newblock {\em American Journal of Mathematics}, 126(4):789--820, 2004.

\bibitem[Kim03]{Kim2003}
H.~Kim.
\newblock {Functoriality for the exterior square of ${\rm GL}_4$ and the
  symmetric fourth of ${\rm GL}_2$}.
\newblock {\em Journal of American Mathematics Society}, 16(1):139--183, 2003.
\newblock appendix 1 by Dinakar Ramakrishnan and appendix 2 by Kim and Peter
  Sarnak.

\bibitem[Kna94]{Knapp1994}
A.~Knapp.
\newblock Local langlands correspondence: The archimedean case.
\newblock In {\em Motives}, volume~55 of {\em {Preceedings of Symposia in Pure
  Mathematics}}, pages 393--410, 1994.

\bibitem[Kri03]{Krishnamurthy2003}
M.~Krishnamurthy.
\newblock {The Asai Transfer to ${\rm GL}_4$ via the Langlands-Shahidi Method}.
\newblock {\em International Mathematics Research Notices}, (41):2221--2254,
  2003.

\bibitem[KS02]{KimShahidi2002}
H.~Kim and F.~Shahidi.
\newblock {Functorial productd for ${\rm GL}_2\x{\rm GL}_3$ and symmetric cube
  for ${\rm GL}_2$}.
\newblock {\em Annals of Mathematics}, 155:837--893, 2002.
\newblock With an appendix by Colin J. Bushell and Guy Henniart.

\bibitem[LLY]{LiLiuYoung}
X.~Li, S.~C. Liu, and M.~Young.
\newblock {The $L^2$ restriction norm of a Maass form on ${\rm
  SL}_{n+1}(\mathbb{Z})$}.
\newblock {\em Mathematische Annalen}.
\newblock To appear.

\bibitem[Lok01]{Loke2001}
H.~Y. Loke.
\newblock {Trilinear forms of $\mathfrak{gl}_2$}.
\newblock {\em Pacific Journal of Mathematics}, 197(1):119--144, 2001.

\bibitem[LY12]{LiYoung2012}
Xiaoqing Li and M.~Young.
\newblock {The $L^2$ restriction norm of a ${\rm GL}_3$ Maass form}.
\newblock {\em Compositio Mathematica}, 148:675--717, 2012.

\bibitem[Mar16]{Marshall2016}
S.~Marshall.
\newblock {$L^p$ norms of higher rank eigenfunctions and bounds for spherical
  functions}.
\newblock {\em Journal of European Mathematical Society}, 18:1437--1493, 2016.

\bibitem[Mol99]{Molteni1999}
G.~Molteni.
\newblock {$L$-functions: Siegel type theorems and structure theorems}.
\newblock 1999.
\newblock PhD thesis.

\bibitem[MV10]{MV2010}
P.~Michel and A.~Venkatesh.
\newblock {The subconvexity problem for ${\rm GL}_2$}.
\newblock {\em Publ. Math. Inst. Hautes Etudes Sci.}, (111):171--271, 2010.

\bibitem[Nel11]{Nelson2011}
P.~Nelson.
\newblock Equidistribution of cusp forms in the level aspect.
\newblock {\em Duke Mathematical Journal}, 160(3):467--501, 2011.

\bibitem[NPS14]{Nelson2014}
P.~Nelson, A.~Pitale, and A.~Saha.
\newblock {Bound for Rankin-Selberg integrals and quantum unique ergodicity for
  powerful levels}.
\newblock {\em Journal of American Mathematics Society}, 27(1):147--191, 2014.

\bibitem[Orl87]{Orloff1987}
T.~Orloff.
\newblock {Special values and mixed weigh triple products}.
\newblock {\em Inventiones Mathematiace}, 90:169--180, 1987.
\newblock with appendix by Don Blasius.

\bibitem[Pop08]{Popa2008}
A.~Popa.
\newblock {Whittaker newforms for archimedean representations of ${\rm
  GL}(2)$}.
\newblock {\em Journal of Number Theory}, 128:1637--1645, 2008.

\bibitem[Pra90]{Prasad1990}
D.~Prasad.
\newblock {Trilinear forms for representations of ${\rm GL}(2)$ and local
  $\epsilon$-factors}.
\newblock {\em Compositio Mathematica}, 75(1):1--46, 1990.

\bibitem[Pra92]{Prasad1992}
D.~Prasad.
\newblock {Invariant linear forms for representations of ${\rm GL}$(2) over a
  local field}.
\newblock {\em American Journal of Mathematics}, 114:1317--1363, 1992.

\bibitem[PSP08]{Prasad2008}
D.~Prasad and R.~Schulze-Pillot.
\newblock Generalised form of a conjecture of jacquet and a local consequence.
\newblock {\em Journal fur die reine und angewandte Mathematik},
  (616):219--236, 2008.

\bibitem[PSR87]{PSR1987}
I.~Piatetski-Shapiro and S.~Rallis.
\newblock {Rankin triple $L$-functions}.
\newblock {\em Compositio Mathematica}, 64(1):31--115, 1987.
\newblock MR 911357.

\bibitem[Ram00]{Rama2000}
D.~Ramakrishnan.
\newblock {Modularity of the Rankin-Selberg $L$-series, and multiplicity one
  for ${\rm SL}$(2)}.
\newblock {\em Annals of Mathematics}, 152(2):45--111, 2000.

\bibitem[Sah17]{Saha2017}
A.~Saha.
\newblock {On sup-norms of cusp forms of powerful level}.
\newblock {\em Journal of European Mathematical Society}, 19(11):3549--3573,
  2017.

\bibitem[Sch02]{Schmidt2002}
R.~Schmidt.
\newblock {Some remarks on local newforms for ${\rm GL}(2)$}.
\newblock {\em Journal of Ramanujan Mathematical Society}, 17(2):115--147,
  2002.

\bibitem[Sha90]{Shahidi1990}
F.~Shahidi.
\newblock {A proof of Langlands' conjecture on Plancherel measures:
  Complementary series of $p$-adic groups}.
\newblock {\em Annals of Mathematics}, 132(2):273--330, 1990.

\bibitem[Tat79]{Tate1979}
J.~Tate.
\newblock Number theoretic background.
\newblock In {\em {Automorphic forms, representations, and $L$-functions}},
  volume~33 of {\em {Preceedings of Symposia in Pure Mathematics}}, pages
  3--26, 1979.
\newblock part 2.

\bibitem[Tun83]{Tunnell1983}
J.~Tunnell.
\newblock {Local $\epsilon$-factors and characters of ${\rm GL}_2$}.
\newblock {\em American Journal of Mathematics}, 105, 1983.

\bibitem[Wal85]{Wald1985}
J-L. Waldspurger.
\newblock {Sur les valeurs de certaines fonctions $L$ automorphes en leur
  centre de syemetrie}.
\newblock {\em Compositio Mathematica}, 54(2):173--242, 1985.

\bibitem[Wat08]{Watson2008}
T.~Watson.
\newblock Rankin triple products and quantum chaos.
\newblock 2008.
\newblock Ph.D. dissertation, Princeton University, Princeton, N.J., 2002,
  arXiv:0810.0425v3 [math.NT].

\bibitem[Woo16]{Woodbury2016}
M.~Woodbury.
\newblock {On the triple product formula: Real local calculations}.
\newblock In Jan~Hendrik Bruinier and Winfried Kohnen, editors, {\em
  {$L$-functions and automphic forms}}, volume~10 of {\em Contributions in
  Mathematical and Computational Sciences,}, pages 275--298. Springer, Chem,
  2016.

\end{thebibliography}
\end{document}